\newfont{\cyr}{wncyr10 scaled 1100}
\theoremstyle{plain}
\newtheorem{theorem}{Theorem}[section]
\newtheorem{corollary}[theorem]{Corollary}
\newtheorem{lemma}[theorem]{Lemma}
\newtheorem{proposition}[theorem]{Proposition}
\theoremstyle{definition}
\newtheorem{definition}[theorem]{Definition}
\newtheorem{examplewr}[theorem]{Example}
\theoremstyle{remark}
\newtheorem{obswr}[theorem]{Observation}
\newtheorem{remarkwr}[theorem]{Remark}
\definecolor{Gray}{gray}{0.85}
\definecolor{LightCyan}{rgb}{0.88,1,1}
\newcolumntype{g}{>{\columncolor{Gray}}c}
\newcolumntype{y}{>{\columncolor{LightCyan}}c}
\newcolumntype{o}{>{\columncolor{pink}}c}
\newenvironment{remark}{\begin{remarkwr}\begin{upshape}}{\end{upshape}\end{remarkwr}}
\newcommand{\cA}{\mathcal A}
\newcommand{\cW}{\mathcal W}
\newcommand{\cT}{\mathcal T}
\newcommand{\cO}{{\mathcal O}}
\newcommand{\cG}{{\mathcal G}}
\newcommand{\cD}{{\mathcal D}}
\newcommand{\cL}{{\mathcal L}}
\newcommand{\sV}{\mathscr V}
\newcommand{\sS}{\mathscr S}
\newcommand{\sK}{\mathscr K}
\newcommand{\sD}{{\mathscr D}}
\newcommand{\sQ}{{\mathscr Q}}
\newcommand{\sH}{\mathscr H}
\newcommand{\sB}{{\mathscr B}}
\newcommand{\sR}{{\mathscr R}}
\newcommand{\Z}{\mathbb{Z}}
\newcommand{\Q}{\mathbb{Q}}
\newcommand{\R}{\mathbb{R}}
\newcommand{\C}{\mathbb{C}}
\newcommand{\A}{\mathbb{A}}
\newcommand{\F}{\mathbb{F}}
\newcommand{\T}{\mathbb{T}}
\newcommand{\PP}{\mathbb{P}}
\newcommand{\GL}{\mathrm{GL}}
\newcommand{\SL}{\mathrm{SL}}
\newcommand{\Gal}{\mathrm{Gal\,}}
\newcommand{\Aut}{\mathrm{Aut}}
\newcommand{\Hom}{\mathrm{Hom}}
\newcommand{\Div}{\mathrm{Div}}
\newcommand{\Pic}{\mathrm{Pic}}
\newcommand{\AJ}{\mathrm{AJ}}
\newcommand{\ord}{\mathrm{ord}}
\newcommand{\red}{\mathrm{red}}
\newcommand{\defeq}{\vcentcolon=}
\newcommand{\into}{\hookrightarrow}
\newcommand{\too}{\longrightarrow}								
\newcommand{\mapstoo}{\longmapsto}
\newcommand{\intoo}{\lhook\joinrel\longrightarrow}
\DeclareRobustCommand\ontoo{\relbar\joinrel\twoheadrightarrow}
\begin{document}

\title[The Gross--Kohnen--Zagier theorem via $p$-adic uniformization]
{The Gross--Kohnen--Zagier theorem  \\
via $p$-adic uniformization }

\author{Lea Beneish}
\author{Henri Darmon}
\author{Lennart Gehrmann}
\author{Mart\'{\i} Roset}

\address{L.~B.: University of North Texas, Denton, USA}
\email{lea.beneish@unt.edu}
\address{H.~D.: McGill University, Montreal, Canada}
\email{darmon@math.mcgill.ca}
\address{L.~G.: Bielefeld University, Bielefeld, Germany}
\email{gehrmann.math@gmail.com}
\address{M.~R.: McGill University, Montreal, Canada}
\email{marti.rosetjulia@mail.mcgill.ca}

\subjclass[2020]{11G18, 11F27, 11F30, 11F37}

\begin{abstract}
This article gives a new proof of the Gross--Kohnen--Zagier theorem for Shimura curves which exploits the $p$-adic uniformization of Cerednik--Drinfeld. The explicit description of CM points via this uniformization leads to an expression relating the Gross--Kohnen--Zagier generating series to the ordinary projection of the first derivative, with respect to a weight variable, of a $p$-adic family of positive definite ternary theta series.
\end{abstract}

\maketitle

\tableofcontents

\section{Introduction}
Let $S$ be a finite set of places of $\Q$ of odd cardinality containing $\infty$ and let $N^+$ be a square-free positive integer which is not divisible by any finite place in $S$. 
This datum gives rise to a {\em modular} or {\em Shimura curve} $X$ defined over $\Q$, which is an instance of an orthogonal Shimura variety.
Its set $X(\C)$ of complex points can be described in terms of an Eichler $\Z$-order $\sR$ of  level $N^+$ 
in a quaternion algebra $\sB$  over $\Q$  ramified exactly at $S - \{\infty\}$.
Namely, the set  $\sV$ of trace zero elements in $\sB$ equipped with the quadratic form $\sQ$ induced by the reduced norm is a quadratic space of signature $(1,2)$, and is anisotropic at all the places $v\in S-\{\infty\}$.
The action of $\sB^\times$ on $\sV$ via conjugation identifies $\sB^\times$ with the group of spinor similitudes of $\sV$.
It naturally acts on the conic $C_\sV\subseteq \PP(\sV)$ whose rational points over a field $E$ of characteristic $0$ are given by 
\begin{equation}\label{def: conic}
C_\sV(E) = \left\{ \ell\in \PP(\sV_E)\ \middle|\ \sQ(\ell) = \{0\} \right\}.
\end{equation}
Here and from now on, if $M$ is an abelian group, and $A$ is a ring, write $M_A \defeq M \otimes_\Z A$.
The group $\varGamma$ of units of $\sR$ acts discretely on the symmetric space
\[
\sK = C_\sV(\C) - C_\sV(\R)
\] 
associated to the orthogonal group of $\sV$.
The set $X(\C)$ of complex points of $X$ is identified with the quotient $\varGamma \backslash \sK$.

Given a vector $v\in \sV$ for which $\sQ(v) > 0$,  let  $\Delta(v)\subset \sK$ 
 be the  two points in $\sK$ represented by a vector orthogonal to $v$. 
 Each positive integer $D$ in
 \[
{\cD}_S \defeq \left\{ D \in \Z_{> 0}  \ \middle| \  \exists v \in \sV \mbox{ such that } \sQ(v) = D \right\} 
\]
gives rise to a zero-cycle  on $X$  by setting
\begin{equation}
\label{eqn:def-Delta-infty}
\Delta(D) \defeq \sum_{\substack{v \in \varGamma\backslash \sR_0, \\  \sQ(v) = D}} \frac{1}{ \#\mathrm{Stab}_{\varGamma/\{\pm 1\}}(v)  }\Delta(v) \in \Div(X(\C))_\Q,
\end{equation}
 where $\sR_0\subseteq \sV$ is the $\Z$-lattice $\sR \cap \sV$.
The divisor $\Delta(D)$, which is supported on a finite set of CM points on $X$, is a simple instance of a {\em Heegner divisor} on this Shimura curve.
The Gross--Kohnen--Zagier theorem asserts that the classes of $\Delta(D)$ in the Jacobian of $X$ can be packaged into a modular generating series of weight $3/2$.
Namely, let $\cL$ be the tautological line bundle of isotropic vectors whose spans 
are points of $\sK$.
This bundle is $\sB^\times$-equivariant and, therefore, descends to a line bundle on $X(\C)$, which is identified with the cotangent bundle of $X$.
In particular, it has a model over $\Q$.
Denote by $[\Delta]$ (resp.~$[\cL^\vee]$) the class in $\Pic(X)(\Q)$ of a divisor $\Delta$ (resp.~of the dual $\cL^\vee$ of the line bundle $\cL$) on $X$.
Then, the formal generating series 
\begin{equation}
\label{eqn:gkz}
 G(q) \defeq [\cL^\vee] + \sum_{D\in \cD_{S}} [\Delta(D)] q^{ D} \in \Pic(X)(\Q)_\Q[[q]],
\end{equation}
is a modular form of weight $3/2$ and level $\Gamma_0(4N)$, where $N$ is the product of $N^+$ with all finite places in $S$. 
\begin{remark}\label{rmk: characterization J valued mod forms}
	Let $A$ be an abelian group and let $f \in A[[q]]$ be a formal $q$-series with coefficients in $A$. Then $f$ is called a modular form of weight $3/2$ and level $\Gamma_0(4N)$ if for every morphism $\varphi\colon A \to \C$ the generating series $\varphi(f) \in \C [[q]]$, obtained by applying $\varphi$ to each of the coefficients of $f$, is the $q$-expansion of a modular form of weight $3/2$ and level $\Gamma_0(4N)$.
\end{remark}
The Gross--Kohnen--Zagier theorem was first proved in \cite{gkz} in the case of modular curves (i.e., where $S=\{\infty\}$) by calculating the Arakelov intersection pairings of the divisors $\Delta(D)$ with a fixed CM divisor.
It was extended by Borcherds \cite{borcherds} to the  setting of orthogonal groups of real signature $(n,2)$, encompassing Shimura curves as a special case where the underlying quadratic space is of signature $(1,2)$, as a consequence of his theory of singular theta lifts.
The work of Yuan, Zhang, and Zhang \cite{yzz} proves Theorem \ref{thm:main} in much greater generality, for certain orthogonal groups over totally real fields.

The goal of this article is to describe a new proof of the Gross--Kohnen--Zagier theorem in the case where $S\ne \{\infty\}$, i.e., when $X$ is not a modular curve. 
To simplify the exposition we will also assume that $2 \nmid N$.
\begin{theorem}
\label{thm:main}
The generating series $G(q) \in \Pic(X)(\Q)_\Q[[q]]$ of \eqref{eqn:gkz} is a modular form of weight $3/2$ and level $\Gamma_0(4N)$.
\end{theorem}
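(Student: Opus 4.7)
The plan is to fix an auxiliary prime $p \in S - \{\infty\}$ and exploit the Cerednik--Drinfeld $p$-adic uniformization of $X$. Under this uniformization, the rigid analytification of $X$ over $\Q_p$ is identified with a quotient of Drinfeld's $p$-adic upper half plane $\sH_p$ by an $S$-arithmetic subgroup $\varGamma'$ of the units of a $\Z[1/p]$-Eichler order in the definite quaternion algebra $B$ obtained from $\sB$ by interchanging the ramification behavior at $p$ and $\infty$. The Heegner divisors $\Delta(D)$ then acquire an explicit rigid-analytic description indexed by vectors in the trace zero part of this Eichler order, which is a positive definite ternary quadratic $\Z$-lattice.

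By Remark \ref{rmk: characterization J valued mod forms} and after fixing an embedding $\C \into \C_p$, it suffices to prove that $\varphi(G(q))$ is classical modular of weight $3/2$ and level $\Gamma_0(4N)$ for every morphism of abelian groups $\varphi\colon \Pic(X)(\Q)_\Q \to \C_p$. The first step is to rewrite $\varphi([\Delta(D)])$ as a multiplicative integral of an explicit rigid-analytic theta function on $\sH_p$ against a harmonic cocycle on the Bruhat--Tits tree of $\PGL_2(\Q_p)$ attached to $\varphi$ via a Manin--Drinfeld-style description of $\Pic(X^{\mathrm{rig}})$.

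The second, crucial step is to recognize the resulting $p$-adic generating series as a derivative along a weight variable of a $p$-adic family of positive definite ternary theta series. Precisely, one introduces a one-parameter family $\Theta(k,q)$ of weight $3/2+k$ obtained by twisting the classical ternary theta series of the trace zero lattice of an Eichler order of $B$ by a $p$-adic weight character. The key identity to be established is
\begin{equation*}
\varphi(G(q)) \;=\; e^{\mathrm{ord}}\!\left(\frac{d}{dk}\Theta(k,q)\bigg|_{k=0}\right),
\end{equation*}
where $e^{\mathrm{ord}}$ denotes Hida's ordinary projector. Since $\Theta(k,q)$ is $p$-adically modular by construction, the right-hand side is a classical form of weight $3/2$ and level $\Gamma_0(4N)$ by Hida theory, which yields Theorem \ref{thm:main}.

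The main obstacle will be establishing this identity: it amounts to a Kronecker-type limit formula on the Bruhat--Tits tree, matching, term by term, the local contribution of each $\Delta(v)$ — as read off from the reduction of $v$ modulo $p$ and the geometry of the edges stabilized by its centralizer — against the $k$-derivative of the corresponding weighted sum in $\Theta(k,q)$. Carrying this out will require a careful choice of rigid-analytic antiderivatives, a detailed bookkeeping of how $\Delta(v)$ distributes over the vertices and edges of the tree of $\PGL_2(\Q_p)$, and the computation of a logarithmic derivative of a local Euler-type factor at $p$ that accounts precisely for the discrepancy between the $q$-coefficients of $G(q)$ and those of the naive ternary theta series on the trace zero lattice of $B$.
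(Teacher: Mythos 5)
Your overall route is the same as the paper's (Cerednik--Drinfeld uniformization, $p$-adic description of the Heegner divisors, identification of the resulting $p$-adic generating series with the ordinary projection of the weight-derivative at $k=0$ of a $p$-adic family of definite ternary theta series, classicality via Hida theory in weight $3/2$), but your central identity is false as stated. The correct statement, Theorem \ref{thm: log(GKZ) = pr e Theta_0'} (proved as Theorem \ref{thm: log(GPhi(gamma)) = pr1(eordTheta_0')}), reads $\log_\omega(TG)=\mathrm{pr}_{1}\bigl(e_{\ord}(\Theta_0')\bigr)$: the projector $\mathrm{pr}_1$ onto the $U_{p^2}=1$ eigenspace cannot be omitted. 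The ordinary projector $e_{\ord}=\lim_m U_{p^2}^{m!}$ acts as the identity on the $U_{p^2}=-1$ eigenspace (the eigenvalue $-1$ is a $p$-adic unit), and the numerical example of Section \ref{section: numerical example} exhibits a nonzero $\mathrm{pr}_{-1}(e_{\ord}(\Theta_0'))$, so $e_{\ord}(\Theta_0')$ genuinely differs from the generating series. The reason the generating series lies in the $U_{p^2}=1$ eigenspace is the relation $\Delta_\Phi(Dp^2)=\Delta_\Phi(D)$ for special $\Phi$, i.e.\ its coefficients are constant along the progressions $Dp^{2n}$; your term-by-term matching on the tree can only identify coefficients at indices $Dp^{2n}$ with $D\in\cD_S$ (this is Lemma \ref{thm: computation q-expansion ordinary projection}), and one still needs the Atkin--Lehner and Vandermonde arguments in the proof of Theorem \ref{thm: log(GPhi(gamma)) = pr1(eordTheta_0')} to conclude that the two sides agree at the remaining indices, where they are \emph{not} equal before applying $\mathrm{pr}_1$.

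A second gap is structural: the right-hand side of your identity is cuspidal (Corollary \ref{cor: classicality}), whereas for a functional $\varphi$ that does not factor through the Jacobian, $\varphi(G(q))$ has a nonzero Eisenstein component coming from the degrees. The paper must therefore first prove modularity of $\deg(G_\Phi)$ separately, via the Siegel--Weil formula (Proposition \ref{prop: degGPhi is modular}), and then pass to the Jacobian by applying Hecke operators; moreover, to even define the homomorphisms whose logarithms you integrate, one needs lifts of the Abel--Jacobi classes that are well defined in $\Hom(\bar{\Gamma},\C_p^\times)$ rather than only modulo the period lattice $j(\bar{\Gamma})$, which forces the reduction to \emph{convenient} Schwartz--Bruhat functions and divisors of strong degree zero (Lemma \ref{lemma: lift of AJ divisor strong degree 0}, Proposition \ref{prop:reduce-to-convenient}) -- an ambiguity your multiplicative-integral formulation never addresses. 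Finally, two of your black boxes require proof: the vanishing $\Theta_0=0$ (Lemma \ref{lem: vanishing}), without which $\Theta_0'$ is not a $p$-adic modular form of weight $3/2$ at all and which dictates the precise shape of the family (a difference of two theta series weighted by the isotropic eigenvectors of a hyperbolic $\gamma$, not merely a character twist of one theta series); and ``Hida theory'' in weight $3/2$, where the control theorem at the boundary weight $k=0$ is not available off the shelf and is established in the paper (Proposition \ref{prop: dim ordinary spaces for half-integral weight constant}, Theorem \ref{thm: classicality for lambda adic forms}) by transporting Hida's integral-weight results through the Shimura correspondence.
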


Our approach to this theorem rests on the fact that, at a finite place $p\in S$, the curve $X(\C_p)$ admits a $p$-adic analytic uniformization.
More precisely, $X(\C_p)$ can be described as the quotient of the $p$-adic upper half-plane by the discrete action of the norm one elements of an Eichler $\Z[1/p]$-order $R$ of level $N^+$ in the (definite) quaternion algebra ramified exactly at $S-\{p\}$. Furthermore, the Heegner divisors $\Delta(D)$ can be described $p$-adically in terms of this uniformization.
This immediately gives an expression of the generating series of degrees
\[
\deg(G)(q) = \deg(\cL^\vee) + \sum_{D \in \cD_S} \deg(\Delta(D))q^D
\]
in terms of definite ternary theta series, recovering a well-known modularity result (see for example \cite[Chapter 2]{HZ} and \cite[Theorem I]{K}).
Thus, it is enough to prove modularity of the generating series $TG(q)$ for Hecke operators of degree $0$, for which $TG(q)$ takes values in the $\Q$-rational points of the Jacobian $J$ of $X$.
The existence of a basis of modular forms with rational coefficients then reduces the problem to proving modularity of the generating series
\[
\log_\omega(TG)(q)  \defeq \sum_{D\in \cD_{S}} \log_\omega([T\Delta(D)]) q^D   \in \Q_p[[q]]
\]
for every cotangent vector $\omega$ of $J_{\Q_p}$ with associated $p$-adic formal logarithm $\log_\omega\colon J(\Q_p)\to \Q_p$.
For appropriate Hecke operators $T$, the $p$-adic description of the divisors $T\Delta(D)$ leads to an expression of this series as the ordinary projection of an infinitesimal $p$-adic deformation of a positive definite ternary theta series attached to the data $(\omega, R, T)$. More precisely,   these data give rise to a $p$-adic family of weighted theta series $\Theta_k$ of weight $k + 3/2$, $k \in \Z_p^\times$, whose specialization at weight $3/2$ vanishes (see Section \ref{subsec: p adic family Thetak} for its definition).
It then follows that its derivative with respect to $k$ evaluated at $k = 0$, denoted $\Theta_0'$, is a $p$-adic cusp form of weight $3/2$.
Let $e_{\ord}$ be $p$-ordinary projector acting on this space.
By a classicality result, $e_{\ord}(\Theta_0')$ is a cusp form of weight $3/2$ and level $\Gamma_0(4N)$. 
Let $\mathrm{pr}_1$ be the projector on the space of cusp forms of weight $3/2$ and level $\Gamma_0(4N)$ to the eigenspace of the Hecke operator $U_{p^2}$ of eigenvalue $1$.
The main contribution of this article is the following formula.
\begin{theorem}\label{thm: log(GKZ) = pr e Theta_0'}
	We have 
	\[
	\log_\omega(TG) = \mathrm{pr}_{1}(e_{\ord} (\Theta_0' )).
	\]
\end{theorem}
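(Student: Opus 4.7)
\medskip

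\noindent\textbf{Proof proposal.} The strategy is to compute the $D$-th Fourier coefficient of $\log_\omega(TG)$ directly via the Cerednik--Drinfeld uniformization and match it, up to applying $\mathrm{pr}_1 \circ e_{\ord}$, with the $D$-th coefficient of $\Theta_0'$. Both sides are $p$-adic objects of weight $3/2$, so it suffices to verify the identity of Fourier coefficients at $q^D$ for each $D \in \cD_S$.

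For the left-hand side, I would exploit the rigid analytic uniformization $X(\C_p) \cong \Gamma \backslash \sH_p$, where $\Gamma$ is the norm-one unit group of the Eichler $\Z[1/p]$-order $R$ and $\sH_p$ is the $p$-adic upper half-plane. Under this identification, each vector $v \in R_0 \defeq R \cap \sV$ with $\sQ(v) = D$ determines two fixed points on $\sH_p$, arising from the embedding $\Q(v) \hookrightarrow \sB_p$. Since $T$ has degree $0$, the $p$-adic logarithm $\log_\omega([T\Delta(D)])$ can be computed as a sum of Coleman/rigid integrals of $\omega$ between these fixed points and a base point, and the defining expression \eqref{eqn:def-Delta-infty} unfolds into a sum over $\Gamma$-orbit representatives $v \in R_0$ with $\sQ(v) = D$ of explicit rigid-analytic quantities attached to the data $(v, \omega, T)$.

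For the right-hand side, I would appeal to the construction of $\Theta_k$ from Section \ref{subsec: p adic family Thetak}. Since $\Theta_k$ is a $p$-adic family of positive definite ternary theta series with $\Theta_0 = 0$, its $k$-derivative $\Theta_0'$ has $D$-th Fourier coefficient equal to a sum over $v$ with $\sQ(v) = D$ of logarithmic invariants obtained by differentiating the weight-$k$ functions at $k=0$, via the identity $\frac{d}{dk}\big|_{k=0} \lambda(v)^k = \log \lambda(v)$. The input data $(\omega, R, T)$ should be arranged precisely so that these $\lambda(v)$ are the rigid-analytic periods governing the Coleman integrals of the previous step. Matching the two descriptions then produces an identity of formal $q$-series before any projection is applied.

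The main obstacle, and the reason the projectors $e_{\ord}$ and $\mathrm{pr}_1$ appear on the right, lies in reconciling the $p$-adic nature of $\Theta_0'$ with the fact that $\log_\omega(TG)$ encodes the coefficients of a classical modular form of weight $3/2$ and level $\Gamma_0(4N)$. A Coleman--Hida-style classicality theorem ensures that the ordinary projection $e_{\ord}(\Theta_0')$ is indeed classical of the correct weight and level, while the further projection $\mathrm{pr}_1$ onto the $U_{p^2} = 1$ eigenspace extracts the component coming from the Heegner divisors on the ordinary locus via the $p$-adic uniformization. The delicate technical point will be to verify that the complementary $U_{p^2}$-eigenspaces contribute trivially on both sides; this should follow from the stratification of $\Gamma$-orbits of vectors $v \in R_0$ according to the $p$-adic valuation of $\sQ(v)$ and from an explicit description of the action of $U_{p^2}$ on these orbits compatible with the Cerednik--Drinfeld isomorphism.
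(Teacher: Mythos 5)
Your outline reproduces the broad shape sketched in the paper's introduction (uniformize at $p$, compare Fourier coefficients, invoke classicality of ordinary half-integral weight forms), but the two steps that actually carry the proof are either missing or wrong. First, the mechanism for computing the left-hand side is not Coleman integration. The paper uses the Mumford-curve description of the Jacobian, $J(L)\simeq \Hom(\bar{\Gamma}, L^\times)/j(\bar{\Gamma})$ (Proposition \ref{prop: description p-adic points of Jacobian}), so the relevant functionals are $\log_p$ of the evaluation of automorphy factors $j_\sD$ at hyperbolic elements $\gamma\in\Gamma$. Lemma \ref{lemma: c_D in terms of a product over cosets of Gamma by gamma} expresses $j_\sD(\gamma)$ as an infinite product of Weil symbols over $\gamma^\Z\backslash\Gamma$, and Sections \ref{section: values of p adic theta functions}--\ref{section: AJ images of Heegner divisors} reorganize this product along the Bruhat--Tits tree (the geodesic of $\gamma$, the notion of depth) into a limit of finite products indexed by vectors $v$ with $Q(v)=Dp^{2n}$ pairing to a $p$-adic unit against the eigenvectors $w_j^{\pm}$ of $\gamma$ (Theorem \ref{thm: expression j DPhi D in terms of Lj}). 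The point you miss is that the family $\Theta_k$ is \emph{built from this same data} ($\gamma$, its eigenvectors, the functions $\Phi_j^{\pm}$), so the matching is structural rather than something to be "arranged"; in your setup, where $\omega$ is an abstract cotangent vector and the connection to "rigid-analytic periods $\lambda(v)$" is only posited, that link is exactly the unproven hard step. Relatedly, requiring only that $T$ have degree $0$ is insufficient: one needs $T$ to annihilate $\mathrm{Funct}(\Gamma\backslash\cT_0,\Z)$ so that the divisors become of strong degree zero (Lemma \ref{lemma: div strong degree 0 and Hecke operators}); otherwise the lift $j_\sD$ of the Abel--Jacobi image is only well defined modulo the period lattice $j(\bar{\Gamma})$ (Lemma \ref{lemma: lift of AJ divisor strong degree 0}) and the limit formulas fail.

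Second, your claim that the comparison "produces an identity of formal $q$-series before any projection is applied" is false, and it inverts the role of the projectors. What the comparison actually yields (Lemma \ref{thm: computation q-expansion ordinary projection}) is
\begin{equation*}
a_D\bigl(\log_\gamma(G_\Phi^+)\bigr) \;=\; \lim_{m\to\infty}\Bigl( a_{Dp^{2m}}(\Theta_0') + a_{Dp^{2(m+1)}}(\Theta_0')\Bigr),
\end{equation*}
so the $D$-th coefficient of the left side is a limit of coefficients of $\Theta_0'$ at the indices $Dp^{2m}$, and is in general not equal to $a_D(\Theta_0')$. The operator $e_{\ord}=\lim_m U_{p^2}^{m!}$ is precisely the device that realizes this limit; it is intrinsic to the coefficient identity, not a cosmetic classicality step. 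The projector $\mathrm{pr}_1$ then enters because the left side is $U_{p^2}$-invariant (since $\Phi$ is special, $\Delta_\Phi(p^2D)=\Delta_\Phi(D)$), whereas $e_{\ord}(\Theta_0')$ has nonzero components in other $U_{p^2}$-eigenspaces: the paper removes these by a Vandermonde argument on the distinct eigenvalues and handles the coefficients at $D$ with $\left(\frac{-D}{p}\right)=1$ (where the left side vanishes because $\Delta(v)=0$) by an Atkin--Lehner argument. So it is not true that "the complementary $U_{p^2}$-eigenspaces contribute trivially on both sides"; they contribute nontrivially to $e_{\ord}(\Theta_0')$ and must be projected away, which is the content of the final step of the paper's proof.
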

To summarize, the fact that $TG$ is a modular form is a consequence of the modularity of definite theta series and classicality of ordinary $p$-adic modular forms of half-integral weight. 
\begin{remark}
As the proof of Theorem \ref{thm:main} is a purely $p$-adic analytic one, it seems likely that it carries over to more general settings, e.g., to Shimura curves over totally real fields which admit a $p$-adic uniformization.
The assumption that $N^+$ is square-free stems from using $p$-adic families of \emph{scalar-valued} half-integral modular forms, which seem only well-behaved in that case.
Generalizing to arbitrary level likely requires a theory of families of vector-valued modular forms, which so far has only been developed in a few instances (see \cite{LN}).  
\end{remark}
The strategy sketched above bypasses the global height pairings studied by Gross, Kohnen, and Zagier, or the singular theta lifts that arise in the approach of Borcherds. It can be envisaged as fitting into the broader framework of a ``$p$-adic Kudla program", in which $p$-adic families of modular forms play much the same role as analytic families of Eisenstein series in the Archimedean setting.
Insofar as the generating series $G$ are among the simplest instances of the modular generating series arising in the Kudla program, it is hoped that the $p$-adic techniques described here will be more widely applicable, shedding light on the connection between special cycles on orthogonal and unitary Shimura varieties, $p$-adic Borcherds-type lifts, and $p$-adic families of theta series.
A general framework is laid out in the article \cite{DGL}, which introduces the notion of rigid meromorphic cocycles for orthogonal groups.
In \textit{loc.cit.~}modularity statements for generating series of special divisors on arithmetic quotients on higher-dimensional $p$-adic symmetric spaces are formulated.
A crucial input in their proof is the injectivity of the first Chern class when the arithmetic quotient has dimension $3$ and higher.
Theorem \ref{thm:main} complements the main theorem of \cite{DGL} by extending it to the case of curves, where the kernel of the Chern class map needs to be considered.

The organization of the article is as follows.  Section \ref{section: Cerednik--Drinfeld theorem}  explains the $p$-adic uniformization of $X$ and states the Gross--Kohnen--Zagier theorem in terms of this uniformization. Theorem \ref{thm:main p-adic} below describes the main result,
which  is somewhat more general than Theorem \ref{thm:main}, since the divisors $\Delta(D)$ are replaced by linear combinations of Heegner points weighted by Schwartz--Bruhat functions.
Section \ref{section: degrees of Heegner divisors} gives a short proof of the modularity of $\deg(G)$.
Section \ref{section: p-adic Abel--Jacobi map} introduces the $p$-adic Abel--Jacobi map, which gives an explicit description of the Jacobian of a Mumford curve.
This description is used in  Section \ref{section: values of p adic theta functions} to construct certain functionals on the Jacobian, whose values at Heegner points are computed in Section \ref{section: AJ images of Heegner divisors}.
In Section \ref{section: p-adic deformations of theta series}, we define the $p$-adic family $\Theta_k$, prove a classicality result regarding ordinary $p$-adic cusp forms of half-integral weight and prove the main Theorem \ref{thm: log(GKZ) = pr e Theta_0'}, which implies the Gross--Kohnen--Zagier theorem. Finally,  Section \ref{section: numerical example} illustrates the construction of $\Theta_k$ by presenting a concrete example where $S = \{ 7, 13, \infty\}$ and $p = 7$. In this case,   the ordinary projection of $\Theta_0'$ is computed numerically modulo $p$.

\vspace{0.2in}
\noindent \textbf{Acknowledgements:}
This paper was initiated in the Fall of 2020 during a seminar on ``the $p$-adic Kudla program" which was attended by three of the authors and around 10 other participants of a thematic semester in number theory at the {\em Centre de Recherches Math\'ematiques} (CRM) in Montreal. The authors are grateful to the participants of the seminar, notably 
Patrick Allen, Mathilde Gerbelli-Gauthier, Adrian Iovita, Debanjana Kundu, Antonio Lei, Mike Lipnowski, Adam Logan, Marc-Hubert Nicole, 
Alice Pozzi,  Giovanni Rosso, and Salim
Tayou, and to the CRM for hosting an in-person activity during some of the most difficult months of the Covid-19 pandemic.
It is also a pleasure to thank Jan Vonk for several enriching conversations related to this work.

Lea Beneish was supported by a CRM-ISM Postdoctoral Fellowship, and Henri Darmon   by an NSERC Discovery grant. Lennart Gehrmann acknowledges support by the Maria Zambrano grant for the attraction of international talent and by Deutsche Forschungsgemeinschaft (DFG, German Research Foundation) via a research fellowship and via the grant SFB-TRR 358/1 2023 -- 491392403.  Mart\'i Roset received the support of a fellowship from la Caixa Foundation (ID 100010434), code   LCF/BQ/EU21/11890132.

\section{The Cerednik--Drinfeld theorem}\label{section: Cerednik--Drinfeld theorem}
This section recalls the theorem of Cerednik--Drinfeld,
which gives a rigid analytic uniformization of $X$ at a finite prime $p\in S$  that is fixed once and for all. Moreover, we describe Heegner divisors in terms of this uniformization, which leads to a reformulation of the Gross--Kohnen--Zagier theorem in this setting.

\subsection{$p$-adic uniformization of $X$} The rigid analytic uniformization of $X$ proceeds by replacing the place $\infty$ in the complex uniformization of the introduction by the prime $p\in S$. To describe it, we need to introduce some notation. Let $B$ be the quaternion algebra over $\Q$ ramified exactly at the places in $S-\{p\}$. Let $R$ be an Eichler $\Z[1/p]$-order in $B$ of level $N^+$ and denote by $\Gamma$ the group of reduced norm $1$ elements in $R$. Let $Q$ be the restriction of the reduced norm to the space 
\[
V = \left\{ b\in B \ \middle| \ \mathrm{Tr}(b) = 0 \right\}
\]
of elements of reduced trace zero in $B$. It endows $V$ with the structure of a quadratic space of rank 3 over $\Q$, which is of real signature $(3,0)$.
Denote by $\langle\cdot,\cdot \rangle$ the symmetric bilinear form attached to $Q$, that is, $\langle v,w\rangle:=Q(v+w)-Q(v)-Q(w)$.
As in the case of the quadratic space $\sV$, the action of $B^\times$ on $V$ via conjugation identifies $B^\times$ with the group of spinor similitudes of $V$.
The intersection $R_0 = R\cap V$ is an even $\Z[1/p]$-lattice in $V$.

A $p$-adic symmetric space  is associated to the orthogonal group of $V_{\Q_p}$ as follows.
Similarly to \eqref{def: conic} denote by $C_V\subseteq \PP(V)$ the conic over $\Q$ attached to $V$ whose rational points over a field $E$ of characteristic $0$ are given by 
\[
C_V(E) = \left\{ \ell \in \PP(V_E) \ \middle| \ Q(\ell) = \{0\} \right\}.
\]
This conic has no rational points, but can be identified with the projective line $\PP$ over $\Q_p$ as follows:
choose an isomorphism of $B_{\Q_p}$ with the matrix ring $\mathrm{M}_2(\Q_p)$.
The conic is then identified with the space of non-zero nilpotent $2\times 2$-matrices up to scaling.
Mapping such a matrix to its kernel yields the desired isomorphism.
The action of $B_{\Q_p}^\times$ is identified with the action of $\GL_2(\Q_p)$ on $\PP_{\Q_p}$ via M\"obius transformations.
\begin{definition}
	The Drinfeld $p$-adic upper half plane is the $\Q_p$-rigid analytic space $\sH_p$, whose $E$-rational points for any complete extension $E/\Q_p$ is the set 
	\[
	\sH_p(E) \defeq C_V(E) - C_V(\Q_p) \simeq \PP_1(E) - \PP_1(\Q_p).
	\]
\end{definition}

We briefly explain the rigid analytic structure on $\sH_p$ in terms of the reduction map to the Bruhat--Tits tree.
The Bruhat--Tits tree, denoted $\cT$, is the graph whose set of vertices is the set of {\em unimodular } $\Z_p$-lattices in $V_{\Q_p}$.
Two unimodular $\Z_p$-lattices $L_1$ and $L_2$ are joined by an edge if they are \emph{$p$-neighbours}, that is,
\[
[L_1:L_1\cap L_2]=[L_2:L_1\cap L_2]=p.
\]
A choice of a vertex $L$ gives a smooth $\Z_{(p)}$-integral structure $C_L$ to the conic $C_V$.
If $L'$ is adjacent to $L$, then the image of $L\cap L'$ in $L/pL$ is a $2$-dimensional non-regular subspace, hence contains a unique isotropic subspace $\ell_{L'}$.
Mapping the edge $(L,L')$ to $\ell_{L'}$ yields a bijection between the set of lattices adjacent to $L$ and $C_L(\F_p) \simeq \PP_1(\F_p)$.
It follows that $\cT$ is homogeneous of degree $p+1$.
The set of vertices and edges of $\cT$ are denoted by $\cT_0$ and $\cT_1$ respectively, and   $\cT$ shall be viewed
as a disjoint union $\cT = \cT_0\sqcup \cT_1$.
Identifying the quadratic space $V_{\Q_p}$ with the set of trace zero endomorphisms of $\Q_p^2$ endowed with the norm form gives the more familiar description of the tree in terms of similarity classes of $\Z_p$-lattices in $\Q_p^2$.
Indeed, the assignment $[\Lambda] \mapsto {\Hom}_0(\Lambda,\Lambda)$ is a bijection between such similarity classes and unimodular lattices in $V_{\Q_p}$. Moreover, two classes $[\Lambda_1]$, $[\Lambda_2]$ are joined by an edge if they admit representatives  $\Lambda_1$ and $\Lambda_2$ satisfying $p\Lambda_1 \subset \Lambda_2 \subset \Lambda_1$.
From this description one easily deduces that $\cT$ is indeed a tree.
The identification of the two graphs is compatible with the natural actions of $B_{\Q_p}^\times$ and $\GL_2(\Q_p)$.
We define a notion of parity on the vertices of $\cT$ by requiring that every edge connects an even vertex with an odd one.
There are exactly two possible choices for this and we choose one of them.
The action of the elements of reduced norm one in $B_{\Q_p}$ on $\cT$ is parity-preserving.

We proceed by describing the well-known \emph{reduction map} 
\[
\red\colon \sH_p(\C_p) \too \cT
\]
in the language of quadratic forms.
For that let $\cO_{\C_p}$ denote the ring of integers of $\C_p$ and $\mathfrak{m}\subseteq \cO_{\C_p}$ its maximal ideal.
Every unimodular $\Z_p$-lattice $L\subseteq V_{\Q_p}$ induces a reduction map
\[
C_V(\C_p)=C_L(\cO_{\C_p})\ontoo C_L(\overline{\F_p}).
\]
\begin{enumerate} 
\item 
Let $L\subseteq V_{\Q_p}$ be a unimodular $\Z_p$-lattice.
Then $\red^{-1}(L)$ is the complement of the $p+1$ residue discs around the points in $C_L(\F_p)$.
\item Let $L, L'\subseteq V_{\Q_p}$ be two unimodular $\Z_p$-lattices that are $p$-neighbours and $\ell_{L'}\in C_L(\F_p)$ the corresponding isotropic line.
The preimage of the edge $(L,L')$ under the reduction map consists of those elements $z\in C_L(\cO_{\C_p})$ that are congruent to $\ell_{L'}$ modulo $\mathfrak{m}$ but not modulo $p$.
\end{enumerate}
One readily checks that the reduction map is $B_{\Q_p}^\times$-equivariant.

A \emph{finite closed subgraph} of $\cT$ is a finite set $\cG\subset \cT$ 
satisfying
\[
(v_1,v_2)\in \cG\cap \cT_1 \Rightarrow v_1, v_2 \in \cG\cap \cT_0.
\]
A \emph{standard affinoid subset} of $\sH_p$ is a set of the form $\red^{-1}(\cG)$, where $\cG$ is a finite closed subgraph of $\cT$. 

\begin{definition} 
	A function on $\sH_p$ is said to be {\em rigid analytic} if its restriction to any standard affinoid subset $\cA\subset\sH_p$ can be written as a uniform limit of rational functions having poles outside of $\cA$. A function on $\sH_p$ is said to be {\em rigid meromorphic} if it is the quotient of two rigid analytic functions, where the denominator is non-zero.
\end{definition}

The group $\Gamma$ acts naturally on $\sH_p$ by conjugation. This action is discrete because $\Gamma$ is a $p$-arithmetic subgroup of an algebraic group that is compact at $\infty$. It follows from there that the quotient space $\Gamma \backslash \sH_p$ has a natural structure of a rigid analytic variety over $\Q_p$. On the other hand, the analytification of $X$  gives a rigid analytic space over $\Q_p$. The Cerednik-Drinfeld theorem states that these two spaces can be identified after base change to the unramified quadratic extension $\Q_{p^2}$ of $\Q_p$.
This identification depends on choices. To make this precise, let us introduce the following notation:
for a finite set $\Sigma$ of places of $\Q$ write ${\A^\Sigma} \subset \prod_{v\notin \Sigma} \Q_v$ for the ring of finite ad\'eles away from $\Sigma$. Moreover, let $\hat{\Z}$ (resp.~$\hat{\Z}^{(p)}$) be the maximal order of $\A^{\infty}$ (resp.~of $\A^{p,\infty}$). Given a finitely generated $\Z[1/p]$-module $M$, we put $\hat{M}=M\otimes \hat{\Z}^{(p)}$.
Fix an identification
\begin{equation}\label{eq: identification sV with V away from p}
	\sV_{\A^{p,\infty}} \simeq V_{\A^{p,\infty}}
\end{equation}
sending the $\hat{\Z}^{(p)}$-lattice $\hat{\sR_0}$ to $\hat{R_0}$.

\begin{theorem}[Cerednik--Drinfeld]\label{thm: Cerednik--Drinfeld uniformization}
	The identification \eqref{eq: identification sV with V away from p} induces an isomorphism 
	\begin{equation}
	\label{eqn:cerednik-drinfeld}
	X \xlongrightarrow{\sim} \Gamma \backslash \sH_p.
	\end{equation}
	of rigid analytic spaces over $\Q_{p^2}$.
\end{theorem}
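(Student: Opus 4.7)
The plan is to deduce this classical theorem from Drinfeld's local uniformization result combined with strong approximation and a moduli-theoretic interpretation of $X$. The key point, which makes the swap of the roles of $p$ and $\infty$ possible, is that the quaternion algebras $\sB$ and $B$ differ precisely at the two places $\{p,\infty\}$: $\sB$ is ramified at $\infty$ and split at $p$ (so it acts on the complex symmetric space $\sK$), whereas $B$ is ramified at $p$ and split at $\infty$ (so its norm-one units act discretely on $\sH_p$).

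First, I would reinterpret $X$ as a coarse moduli space for abelian surfaces with an action of an Eichler order in $\sB$ of level $N^+$, together with the appropriate prime-to-$p$ level structure coming from $\hat{\sR_0}$. Then I would pick an integral model of $X$ over $\Z_{(p)}$ and pass to its formal completion along the special fiber; the $p$-divisible group of the universal abelian surface is a special formal $\cO_{\sB_{\Q_p}}$-module of height $4$ in Drinfeld's sense, since $\sB$ is ramified at $p$.

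Second, I would invoke Drinfeld's theorem, which identifies a formal model $\widehat{\Omega}$ of $\sH_p$ (more precisely $\widehat{\Omega}\times\Z$, indexing the height of a quasi-isogeny) with the moduli stack over $\mathrm{Spf}\,\cO_{\breve{\Q}_p}$ of special formal $\cO_{\sB_{\Q_p}}$-modules equipped with a quasi-isogeny to a fixed one. The Frobenius on $\breve{\Q}_p$ permutes the two connected components of this moduli space indexed by parity of height, which is precisely the mechanism forcing the descent data to land over $\Q_{p^2}$ rather than $\Q_p$, and which recovers our choice of parity on $\cT_0$.

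Third, I would perform the adelic bookkeeping. The complex uniformization presents $X(\C)$ via a double coset $\sB^\times\backslash (\sK\times \hat{\sB}^\times)/\hat{\sR}^\times$; after replacing the local factor at $p$ by Drinfeld's moduli data and using the identification \eqref{eq: identification sV with V away from p} to match level structures away from $p$, the double coset is rewritten using $B^\times$ and $\hat{R}^\times$. Strong approximation for the reduced-norm-one group of $B$ with respect to $\{p\}$ (available because $B$ is definite) collapses the remaining adelic quotient to $\Gamma\backslash \sH_p$, with the extra $\Z/2$ worth of connected components absorbed by the base change from $\Q_p$ to $\Q_{p^2}$. The main obstacle here is the careful matching of integral level structures and the two connected-component contributions (Galois descent from $\Q_{p^2}$ to $\Q_p$ versus parity in the Bruhat--Tits tree), which is what the condition on \eqref{eq: identification sV with V away from p} is designed to make transparent; Drinfeld's local theorem itself is taken as a black box.
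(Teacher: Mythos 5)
Your overall route is the right one, and in fact it is exactly the route of the paper: the paper does not reprove this theorem but simply cites Cerednik, Drinfeld, and Boutot--Carayol, and your sketch (moduli interpretation of $X$, Drinfeld's theorem on special formal $\cO_{\sB_{\Q_p}}$-modules taken as a black box, adelic bookkeeping via strong approximation, and the parity/Frobenius mechanism explaining why the isomorphism lives over $\Q_{p^2}$) is precisely the argument contained in those references.

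However, your opening paragraph states the local invariants backwards, and the error is load-bearing. In the paper's setup, $\sB$ is ramified exactly at $S-\{\infty\}$, hence \emph{ramified at $p$ and split at $\infty$} (this is why $\sV$ has signature $(1,2)$ and $\sB^\times$ acts on the complex symmetric space $\sK$), while $B$ is ramified exactly at $S-\{p\}$, hence \emph{ramified at $\infty$ and split at $p$} ($V$ has signature $(3,0)$). You assert the opposite, and your parentheticals show this is not a typo: an action on $\sK$ requires splitness at $\infty$, and an action of $B_{\Q_p}^\times$ on $\sH_p\subset\PP_1(\C_p)$ by M\"obius transformations requires $B_{\Q_p}\simeq \mathrm{M}_2(\Q_p)$, i.e.\ splitness at $p$. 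The same confusion infects the strong approximation step: strong approximation for $\SL_1(B)$ with respect to $\{p\}$ is \emph{not} ``available because $B$ is definite''; it requires $\SL_1(B)(\Q_p)$ to be non-compact, i.e.\ $B$ split at $p$, and under your stated hypothesis ($B$ ramified at $p$) it would fail outright. Definiteness at $\infty$ is instead the reason one cannot use the place $\infty$, and the reason the $p$-arithmetic group $\Gamma$ is discrete in $\SL_2(\Q_p)$. Note that your second paragraph then correctly uses that $\sB$ is ramified at $p$ (without this the $p$-divisible groups would not be special formal modules), contradicting your first. The repair is easy---swap the roles of $\sB$ and $B$ in the opening paragraph---but since the interchange of invariants at $p$ and $\infty$ is the very heart of Cerednik's construction, the proposal as written is internally inconsistent and should not be accepted until this is fixed.
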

\begin{proof}
	See \cite{C}, \cite{D} and \cite{BC}.
\end{proof}
   
\subsection{$p$-adic analytic description of Heegner divisors}
In analogy with the cycles defined in the introduction, every non-zero element $v\in V$ yields a cycle $\Delta(v)$ on $\sH_p$: $\Delta(v)$ is the sum of those points in $\sH_p$ that are orthogonal to $v$.
This cycle has degree $0$ or $2$ depending on whether the orthogonal complement of $v$ in $V_{\Q_p}$ represents $0$ or not.
In other words, $\Delta(v)\neq 0$ if and only if $\sqrt{-Q(v)}\not \in \Q_p$.
By the Hasse--Minkowski theorem the set $\cD_S$ from the introduction is characterized locally.
In particular, one gets the description
\[
{\cD}_S = \left\{ D \in \Z \ \middle| \ \exists v \in V - \{0\} \mbox{ such that } Q(v) = D \mbox{ and } \Delta(v)\neq 0 \right\},
\]
where we used that $B$ ramifies exactly at $S - \{ p\}$ while  $\sB$ ramifies exactly at $S - \{ \infty\}$.
\begin{lemma}\label{lem: howmanylattices}
Let $D$ be an element of $\cD_S$ and $v\in V$ with $Q(v)=D$.
\begin{enumerate}
\item If $\ord_p(D)=0$, there exists a unique unimodular $\Z_p$-lattice $L$ in $V_{\Q_p}$ containing $v$.
The support of $\Delta(v)$ is contained in $\red^{-1}(L)$.
\item If $\ord_p(D)=1$, there exist exactly two unimodular $\Z_p$-lattices $L_1,L_2$ in $V_{\Q_p}$ containing $v$, which are $p$-neighbours.
The support of $\Delta(v)$ is contained in $\red^{-1}((L_1,L_2))$.
\end{enumerate}
\end{lemma}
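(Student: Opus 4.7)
The plan is to exploit the embedding of the commutative quadratic subalgebra $K = \Q_p[v] \subset B_{\Q_p}$ generated by $v$. Since $\Delta(v) \neq 0$, the element $-D = v^2$ is not a square in $\Q_p$, so $K/\Q_p$ is a field extension. The minimal polynomial $X^2 + D$ of $v$ has discriminant $-4D$ of $p$-adic valuation $\ord_p(D)$, so (using that $p$ is odd, as ensured by the standing assumption $2 \nmid N$) the order $\Z_p[v]$ equals the ring of integers $\cO_K$. Moreover, $K/\Q_p$ is unramified in case~(1) and ramified in case~(2).

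To count unimodular lattices containing $v$, I would invoke the bijection $[\Lambda] \mapsto \Hom_0(\Lambda,\Lambda)$ of the paper between $\Q_p^\times$-similarity classes of $\Z_p$-lattices $\Lambda \subset \Q_p^2$ and unimodular lattices in $V_{\Q_p}$. Under this bijection, $v \in \Hom_0(\Lambda,\Lambda)$ amounts to $v(\Lambda) \subseteq \Lambda$, i.e., $\Lambda$ being an $\cO_K$-submodule. Identifying $\Q_p^2$ with $K$ as a rank-one $K$-module, the $\cO_K$-stable $\Z_p$-lattices are precisely the fractional ideals $\pi^k\cO_K$ ($k \in \Z$), with $\pi$ a uniformizer of $K$; two such are $\Q_p^\times$-similar iff $k \equiv k' \pmod{e}$, where $e$ is the ramification index of $K/\Q_p$. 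This yields a single unimodular lattice $L$ in case~(1), and two, $L_1 = \Hom_0(\cO_K, \cO_K)$ and $L_2 = \Hom_0(\pi \cO_K, \pi \cO_K)$, in case~(2); the $p$-neighbour condition in case~(2) follows from $p\cO_K = \pi^2\cO_K \subsetneq \pi\cO_K \subsetneq \cO_K$, each inclusion being of index $p$.

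For the location of the support of $\Delta(v)$, the key point is that the commutative group $K^\times$ acts on $\sH_p(\C_p)$ fixing $\Delta(v)$ pointwise. Indeed, an isotropic vector $w \in v^\perp_{\C_p}$ is (up to scaling) a two-sided eigenvector for $v$ in $B_{\C_p} \simeq \mathrm{M}_2(\C_p)$ with eigenvalues $\mp\sqrt{-D}$, as one checks using the relation $vw = -wv$ valid for $w \perp v$; a direct computation then gives $\alpha w \alpha^{-1} = \bigl((a - b\sqrt{-D})/(a + b\sqrt{-D})\bigr)\,w$ for every $\alpha = a + bv \in K^\times$, which defines the same point of $\PP(V_{\C_p})$. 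Since $\red$ is $B^\times$-equivariant and hence $K^\times$-equivariant, $\red(\Delta(v))$ lies in the $K^\times$-fixed subset of $\cT$. In case~(1), the fractional ideals of the DVR $\cO_K$ all belong to a single $\Q_p^\times$-similarity class, so this fixed set reduces to the single vertex $L$, giving $\Delta(v) \subset \red^{-1}(L)$. In case~(2), the two similarity classes $[\cO_K]$ and $[\pi\cO_K]$ are individually fixed by $\cO_K^\times$ but swapped by the uniformizer $\pi$, so the $K^\times$-fixed set on $\cT$ is exactly the edge $(L_1, L_2) \in \cT_1$, giving $\Delta(v) \subset \red^{-1}((L_1, L_2))$.

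The most delicate step is the case~(2) assertion that $\red(\Delta(v))$ lies strictly inside the open edge rather than meeting one of its endpoints; this depends crucially on the fact that the full torus $K^\times$, including a uniformizer, fixes each point of $\Delta(v)$, which in turn relies on the eigenvector calculation above. Once the arithmetic of $\cO_K$ is aligned with the Bruhat--Tits tree, the remaining parts of the argument are routine.
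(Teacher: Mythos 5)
Your proposal is correct, but for the lattice count it takes a genuinely different route from the paper. The paper argues entirely on the orthogonal side: it passes to the orthogonal complement $W=v^{\perp}\subset V_{\Q_p}$, invokes the unique maximal integral lattice $L_W$ of the anisotropic plane $W$, shows that any unimodular lattice containing $v$ must contain $\Z_p v\oplus L_W$, and then counts self-dual overlattices via the discriminant module of $\Z_p v\oplus L_W$ (in case (2) a hyperbolic $\F_p$-plane, whose two isotropic lines give the two lattices). You instead transport the problem to the $\GL_2$-side through the bijection $[\Lambda]\mapsto \Hom_0(\Lambda,\Lambda)$ and count fractional ideals of $\cO_K$, $K=\Q_p[v]$, so that the dichotomy $\ord_p(D)\in\{0,1\}$ becomes unramified versus ramified. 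Your route is arguably more transparent and has the virtue of unifying the counting with the support statement, since both become assertions about the torus $K^\times$; on the other hand it leans on the rank-three exceptional isomorphism with $\PGL_2$ and on $p$ being odd (for $\Z_p[v]=\cO_K$), and it would not extend to the higher-rank quadratic spaces for which the paper's discriminant-module argument (via Lemma 1.1 of \cite{DGL}) is designed. For the support statement the two proofs use the same mechanism, torus-fixed points plus $B_{\Q_p}^\times$-equivariance of $\red$, and you in fact supply more detail than the paper, which merely notes that $\Q(v)^\times$ fixes $v$ and hence $\sigma_v$, whereas you verify by the eigenvector computation that $K^\times$ fixes $\Delta(v)$ pointwise.

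One step should be tightened: the claim that the $K^\times$-fixed subset of $\cT=\cT_0\sqcup\cT_1$ is \emph{exactly} $\{L\}$ in case (1), resp.~exactly $\{(L_1,L_2)\}$ in case (2). What you establish is that these simplices are fixed, and that fractional-ideal classes are the only candidate fixed vertices. But a fixed vertex is a priori only a class $[\Lambda]$ with $\alpha\Lambda=c_\alpha\Lambda$ for scalars $c_\alpha\in\Q_p^\times$; one must argue this forces $\Lambda$ to be a fractional ideal (for $\alpha\in\cO_K^\times$, a determinant/index comparison gives $|c_\alpha|_p=1$ and hence $\alpha\Lambda=\Lambda$, and the $\Z_p$-span of $\cO_K^\times$ equals $\cO_K$ for $p$ odd). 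Moreover, one must exclude fixed \emph{edges} whose two endpoints are swapped by the torus: the endpoints of such an edge are stabilized by an index-two subgroup of $K^\times$, and the same span argument still forces them to be fractional-ideal classes, which in case (1) would make both endpoints equal to $[\cO_K]$ (absurd) and in case (2) forces the edge to be $([\cO_K],[\pi\cO_K])$. These are standard facts about maximal tori acting on the tree, and in fairness the paper's own proof elides exactly the same point, but as written your two occurrences of ``so'' conceal them.
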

\begin{proof}
Let $W$ be the orthogonal complement of $v$ in $V_{\Q_p}$.
As $W$ is anisotropic, it contains a unique maximal $\Z_p$-lattice $L_W$, on which $Q$ takes values in $\Z_p$ (see for example \cite[Lemma 11]{AbramenkoNebe}). Moreover, it is characterized by the property that its discriminant module is an $\F_p$-vector space.
Suppose there exists a unimodular lattice $L$ containing $v$.
By \cite[Lemma 1.1]{DGL} the discriminant module of $L\cap W$ is an $\F_p$-vector space and thus equal to $L_W$.
In particular, $L$ must contain $\Z_p v\oplus L_W$.

If $\ord_{p}(D)=0$, one easily checks that $L_W$ is unimodular. Thus, $\Z_p v\oplus L_W$ is the unique unimodular $\Z_p$-lattice containing $v$.
If $\ord_p(D)=1$, then $L_W$ is of index $p$ in its dual. Thus, the discriminant module $M$ of $\Z_p v \oplus L_W$ is a $2$-dimensional $\F_p$-vector space. A quick calculation shows that $Q$ induces a hyperbolic form on $M$.
There exist exactly two self-dual lattices containing $\Z_p v\oplus L_W$ corresponding to the two isotropic lines in $M$.

Let $\sigma_v$ be the unique simplex of $\cT$ corresponding to $v$ and $\Q(v)$ the $\Q$-subalgebra of $B$ generated by $v$.
Since $\Q(v)^\times$ fixes $v$, it follows that it also fixes $\sigma_v$.
The statements about the support of $\Delta(v)$ follow from the $B_{\Q_p}^\times$-invariance of the reduction map.
\end{proof}

The space $\sS(V_{\A^{p,\infty}})$ of $\Z$-valued Schwartz--Bruhat functions on $V_{\A^{p,\infty}}$ admits an action of $B_{\A^{p,\infty}}^\times$ induced by the conjugation action on $V_{\A^{p,\infty}}$. 
Attached to an $\hat{R}^\times$-invariant function $\Phi \in \sS(V_{\A^{p,\infty}})$ and a non-zero rational number $D$ is the zero-cycle
\begin{equation}
\label{eqn:def-Delta-p}
\Delta_{\Phi, \Gamma}(D) = \sum_{\substack{v\in \Gamma \backslash V, \  Q(v) = D}} \frac{1}{\# \mathrm{Stab}_{\Gamma/ \{\pm 1 \}}(v) }\Phi(v) \Delta(v)   \in   \Div(\Gamma \backslash \sH_p)_{\Q},
\end{equation}
on $\Gamma \backslash \sH_p$.
Note the formal similarities between \eqref{eqn:def-Delta-infty} and \eqref{eqn:def-Delta-p} when $\Phi$ is the characteristic function of $\hat{R}_0$, that  will simply be denoted as $1_{R_0}$. We proceed to make them precise. By the theory of complex multiplication, the Heegner points appearing in the divisors $\Delta(D)$ of the introduction are defined over $\overline{\Q}$. Hence, after fixing an embedding $\overline{\Q} \subset \C_p$,  these divisors  can be viewed as elements in $\Div(X(\C_p))$.

\begin{proposition}\label{prop: relation complex and p-adic description of Heegner points}
Let $D \in \cD_S$. The cycle $\Delta_{ 1_{R_0}, \Gamma}(D)$ of \eqref{eqn:def-Delta-p} viewed as an element of $\Div(X(\C_p))$ via the Cerednik--Drinfeld uniformization theorem is equal to the cycle $\Delta(D)$ of \eqref{eqn:def-Delta-infty}.
\end{proposition}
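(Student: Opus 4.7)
The plan is to unpack both cycles as finite sums of Heegner points indexed by orbits of units on trace-zero quadratic lattices, and then to identify these orbits using the adelic double-coset description underlying the Cerednik--Drinfeld uniformization \eqref{eqn:cerednik-drinfeld}.

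First, I would rewrite $\Delta_{1_{R_0},\Gamma}(D)$ in terms of $R_0$. Since $1_{R_0}$ is the characteristic function of $\hat{R}_0$ and $V\cap \hat{R}_0 = R_0$ (as $R_0$ is already a $\Z[1/p]$-lattice), one has
\[
\Delta_{1_{R_0},\Gamma}(D) \;=\; \sum_{\substack{v\in\Gamma\backslash R_0 \\ Q(v)=D}} \frac{1}{\#\mathrm{Stab}_{\Gamma/\{\pm 1\}}(v)}\,\Delta(v),
\]
which is formally parallel to the definition \eqref{eqn:def-Delta-infty} of $\Delta(D)$. It therefore suffices to produce a bijection between the index sets $\varGamma\backslash\{v\in\sR_0:\sQ(v)=D\}$ and $\Gamma\backslash\{v\in R_0:Q(v)=D\}$ that preserves stabilizers and matches the individual cycles $\Delta(v)$ under \eqref{eqn:cerednik-drinfeld}.

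Next I would set up this matching via strong approximation on both sides. On the complex side, $\sB$ is split at $\infty$, so strong approximation for the simply connected group of reduced-norm-one elements of $\sB$ identifies $\varGamma\backslash\{v\in\sR_0:\sQ(v)=D\}$ with the set of $(\sB^\times \times \hat{\sR}^\times)$-orbits on $\{v\in\sV:\sQ(v)=D\}\times \hat{\sB}^\times$ intersected with the finite-adelic lattice condition. On the $p$-adic side, the algebraic group of reduced-norm-one elements of $B$ is non-compact at $p$ because $p\in S$ and $B$ is ramified exactly at $S-\{p\}$, so strong approximation \emph{away from $p$} gives an analogous identification of $\Gamma\backslash\{v\in R_0:Q(v)=D\}$ with $(B^\times\times \hat{R}^\times)$-orbits of the corresponding finite-adelic data. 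The identification \eqref{eq: identification sV with V away from p} of quadratic spaces and lattices away from $p$, which is part of the very input that defines the Cerednik--Drinfeld isomorphism \eqref{eqn:cerednik-drinfeld}, produces a bijection between the two adelic orbit sets. Stabilizers of $v$ in $\varGamma/\{\pm1\}$ and in $\Gamma/\{\pm1\}$ are both canonically identified with the finite group of units of the $\Z$-order (resp.\ $\Z[1/p]$-order) in $\Q(v)$ cut out by the adelic data, hence agree under the bijection.

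Finally I would check that the Heegner divisor $\Delta(v)$ on the complex side corresponds, term by term, to the Heegner divisor $\Delta(v')$ on the $p$-adic side attached to the matched orbit. This is what Theorem \ref{thm: Cerednik--Drinfeld uniformization} asserts at the level of CM points: the Cerednik--Drinfeld isomorphism carries the CM point on $X$ associated to an optimal embedding of the imaginary quadratic order $\Z[v]$ (equivalently, to the vector $v$ orthogonal to a fixed element of $\sK$) into $\sR$ to the corresponding CM point on $\Gamma\backslash\sH_p$ attached to the matched embedding into $R$. I expect the main obstacle to be the bookkeeping: the archimedean and $p$-adic sides use quaternion algebras with swapped ramification behavior at $\infty$ and $p$, so the two applications of strong approximation are at different places and must be reconciled; in addition, the definitions use $\varGamma=\sR^\times$ on one side and $\Gamma=R^1$ on the other, so checking that the $\{\pm1\}$-quotients produce the same stabilizers requires care, even though the two groups induce the same action on the projective quadric $C_\sV\simeq C_V$ away from $p$.
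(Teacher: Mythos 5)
Your proposal has a genuine gap at its decisive step. You assert that the term-by-term matching of the cycles $\Delta(v)$ under the uniformization ``is what Theorem \ref{thm: Cerednik--Drinfeld uniformization} asserts at the level of CM points'', but that theorem asserts nothing about CM points: it is only an isomorphism of rigid analytic spaces over $\Q_{p^2}$. There is no formal reason why this isomorphism, which is constructed via Drinfeld's moduli interpretation in terms of special formal modules, should carry the points of $X(\C_p)$ orthogonal to a vector $v\in\sR_0$ (algebraic CM points, viewed in $\C_p$ through a chosen embedding $\overline{\Q}\subset\C_p$) to the points of $\Gamma\backslash\sH_p$ orthogonal to the matched vector in $R_0$. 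Note that $\sV$ and $V$ are not isomorphic as quadratic spaces --- they differ at $p$ and at $\infty$ --- so the two families of ``orthogonality'' divisors live on genuinely different symmetric spaces and can only be compared through the uniformization map itself. Establishing this compatibility is the entire content of the proposition, and it is precisely what the paper's own proof outsources to the literature: after reducing to $\ord_p(D)\in\{0,1\}$, it quotes Theorem 5.3 of \cite{BD} for $\ord_p(D)=0$ and \cite{LP}, \cite{M} for $\ord_p(D)=1$. By folding this input into the statement of the uniformization theorem, your argument becomes circular exactly where the real work lies.

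Two further points of comparison with the paper's proof. First, the paper does not attempt a matching uniform in $D$: it first shows both sides are invariant under $D\mapsto Dp^2$ --- trivially on the $p$-adic side since $R_0$ is a $\Z[1/p]$-lattice, and on the complex side because $\sB_p$ is ramified, whence $\ord_p(\sQ(\sR_0-p\sR_0))\subseteq\{0,1\}$ and multiplication by $p$ identifies vectors of length $D$ with vectors of length $Dp^2$. This reduction is what reconciles the differing local conditions at $p$ (membership in the $\Z_p$-order $\sR_{0,p}$ of the ramified algebra versus no condition at all), a discrepancy your strong-approximation bookkeeping never confronts. Second, even if your orbit matching were carried out, it would produce at best a bijection of index sets preserving stabilizers: the $\varGamma$-orbits and $\Gamma$-orbits in a fixed adelic orbit are torsors under Picard groups of quadratic orders, so such a bijection exists but is non-canonical. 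The proposition, however, claims an equality of divisors in $\Div(X(\C_p))$, point by point with multiplicities, and that cannot follow from any abstract bijection of index sets; it requires knowing where each individual CM point goes under the Cerednik--Drinfeld map, which is again the cited, non-formal input.
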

\begin{proof}
	Since $R_0$ is a $\Z[1/p]$-lattice, 
	$$\Delta_{1_{R_0}, \Gamma}(D) = \Delta_{1_{R_0}, \Gamma}(Dp^{2n})$$
	 for every $n \geq 0$. On the other hand, the fact that $\sB$ is ramified at $p$ implies that
	\[
	\ord_p(\sQ(\sR_0 - p\sR_0))\subseteq \{0,1\}.
	\]
Therefore, multiplication by $p$ gives a bijection between elements of length $D$ and elements of length $Dp^2$ in $\sR_0$ for every $n\geq 0$, which yields the equality $\Delta(D) = \Delta(Dp^{2n})$.
It is then enough to prove the identification when $D \in \cD_S$ is such that $\ord_p(D) \in \{ 0, 1\}$. The case when $\ord_p(D) = 0$ is treated in Theorem 5.3 of \cite{BD} and the case when $\ord_p(D) = 1$ follows from \cite[Section 3.3]{LP} and \cite[Proposition 5.12]{M}. 
\end{proof}

The Weil representation attached to $V$ and the standard character $\psi\colon \Q \backslash \A \to \C^\times$ induces an action of the metaplectic group $\widetilde{\SL_2}(\A^{p,\infty})$ on $\sS(V_{\A^{p,\infty}})$ that commutes with the $B_{\A^{p,\infty}}^\times$-action.
For $M \geq 0$, let $K_0(M)^{(p)}$ be the subgroup of $\SL_2(\A^{p,\infty}) $ consisting of matrices in $\SL_2(\hat{\Z}^{(p)})$ with left lower entry divisible by $4M$.
Since $K_0(4M)^{(p)}$ splits the exact sequence
\[
1 \too \{\pm 1 \} \too \widetilde{\SL_2}(\A^{p,\infty}) \too \SL_2(\A^{p,\infty}) \too 1
\]
defining the metaplectic group, it can be regarded as a subgroup of $\widetilde{\SL_2}(\A^{p,\infty})$. We similarly define $K_0(4M)$ and view it as a subgroup of $\SL_2(\A^{\infty})$ and of $\widetilde{\SL_2}(\A^{\infty})$.
\begin{definition}
A Schwartz--Bruhat function $\Phi\in\sS(V_{\A^{p,\infty}})$ is called special if
\begin{enumerate}
\item\label{spec1} $\Phi$ is $\hat{R}^\times$-invariant,
\item\label{spec2} $\Phi$ is $K_0(4N)^{(p)}$-invariant, and
\item\label{spec3} $\Phi(pv)=\Phi(v)$ for all $v\in V$.
\end{enumerate}
\end{definition}
The characteristic function $1_{R_0}$ is the prime example of a special Schwartz--Bruhat function.
Let $\Phi$ be a special Schwartz--Bruhat function.
Property \eqref{spec2} implies that for $D\in \Z_{(p)}- \{0\}$ we have $\Delta_{\Phi, \Gamma}(D)=0$ unless $D\in\cD_S$. 
Furthermore, the equality
\begin{equation}\label{eq: special}
\Delta_{\Phi, \Gamma}(p^2D)=\Delta_{\Phi, \Gamma}(D)
\end{equation}
holds for all $D\in\cD_S$ by Property \eqref{spec3}.
The remainder of this work will solely be concerned in proving the following $p$-adic analytic version of the Gross--Kohnen--Zagier theorem, which implies Theorem \ref{thm:main} in view of the previous proposition and the fact that $1_{R_0}$ is special.
\begin{theorem}\label{thm:main p-adic}
	Let $\Phi$ be a special Schwartz--Bruhat function.
	The generating series 
	\[
	G_{\Phi, \Gamma}(q) \defeq  \Phi(0)[\cL^\vee] + \sum_{D \in \cD_S} [\Delta_{\Phi, \Gamma}(D)] q^D \in  \Pic(\Gamma \backslash \sH_p)_\Q [[q]],
	\]
	is a modular form of weight $3/2$ and level $\Gamma_0(4N)$.
\end{theorem}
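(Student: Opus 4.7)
The plan is to follow the roadmap sketched in the introduction, reducing modularity in stages until it becomes a statement provable by classical ternary theta machinery together with $p$-adic deformations thereof.

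First, I would split the generating series into its degree part and its Jacobian part. For the degree, Lemma \ref{lem: howmanylattices} implies that $\deg(\Delta_{\Phi,\Gamma}(D))$ essentially counts representations of $D$ by the positive definite ternary quadratic form $(V,Q)$ weighted by $\Phi$, so $\deg(G_{\Phi,\Gamma})(q)$ is a classical positive definite weighted ternary theta series. Its modularity is the content of Section \ref{section: degrees of Heegner divisors} and reduces to a well-known result.

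Second, I would apply a Hecke operator $T$ of degree zero, so that $TG_{\Phi,\Gamma}(q)$ becomes a $J(\Q)_\Q$-valued $q$-series, where $J$ is the Jacobian of $\Gamma\backslash\sH_p$. In view of Remark \ref{rmk: characterization J valued mod forms} and the existence of a basis of weight $3/2$ and level $\Gamma_0(4N)$ modular forms with rational Fourier coefficients, $TG_{\Phi,\Gamma}(q)$ is modular as soon as its image under every $\Q$-linear functional on $J(\Q)_\Q$ is modular. Such functionals can be detected by the $p$-adic formal logarithms $\log_\omega\colon J(\Q_p)\to\Q_p$ as $\omega$ ranges over a basis of cotangent vectors of $J_{\Q_p}$, so it suffices to show that $\log_\omega(TG_{\Phi,\Gamma})(q)$ is the $q$-expansion of a classical modular form for every such $\omega$, and then to range over enough Hecke operators $T$ to recover the original $G_{\Phi,\Gamma}$.

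Third, I would construct, for the data $(\omega, R, T)$, the $p$-adic family $\Theta_k$ of weight $k+3/2$ theta series described in Section \ref{subsec: p adic family Thetak}. Its specialization at $k=0$ vanishes, so the $k$-derivative $\Theta_0'$ is a $p$-adic cusp form of weight $3/2$. The classicality theorem for ordinary $p$-adic half-integral modular forms, to be proved in Section \ref{section: p-adic deformations of theta series}, then yields that $e_{\ord}(\Theta_0')$ is classical of weight $3/2$ and level $\Gamma_0(4N)$, and the further projection $\mathrm{pr}_1(e_{\ord}(\Theta_0'))$ onto the $U_{p^2}=1$ eigenspace remains a classical cusp form.

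The main obstacle, and the heart of the argument, is then the explicit identity $\log_\omega(TG_{\Phi,\Gamma}) = \mathrm{pr}_1(e_{\ord}(\Theta_0'))$ of Theorem \ref{thm: log(GKZ) = pr e Theta_0'}, which I would establish Fourier coefficient by Fourier coefficient. On the left, the $p$-adic Abel--Jacobi map for the Mumford curve $\Gamma\backslash\sH_p$ (Section \ref{section: p-adic Abel--Jacobi map}) rewrites $\log_\omega([T\Delta_{\Phi,\Gamma}(D)])$ as a rigid analytic period pairing of $\omega$ with a degree-zero divisor, and multiplicative integration of rigid theta functions (Sections \ref{section: values of p adic theta functions} and \ref{section: AJ images of Heegner divisors}) unfolds it into a sum over $v\in\Gamma\backslash V$ with $Q(v)=D$ of local contributions indexed by the position of $v$ on the Bruhat--Tits tree, as dictated by Lemma \ref{lem: howmanylattices}. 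On the right, the $k$-derivative of $\Theta_k$ at $k=0$ produces precisely the $p$-adic logarithm factors arising in the Abel--Jacobi computation, while $e_{\ord}$ and $\mathrm{pr}_1$ encode the combinatorics at $p$ needed to match the two expansions. The principal difficulty is aligning the local $p$-adic expansions on both sides, in particular tracking how the $U_{p^2}$-action, ordinary projection, and tree-level decomposition of Heegner cycles combine; once this matching is carried out, the modularity of $\mathrm{pr}_1(e_{\ord}(\Theta_0'))$ and Remark \ref{rmk: characterization J valued mod forms} complete the proof of Theorem \ref{thm:main p-adic}.
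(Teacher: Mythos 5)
Your proposal reproduces the paper's own roadmap (degree part via definite theta series, Hecke projection into the Jacobian, $p$-adic functionals, and the key identity with $\mathrm{pr}_1(e_{\ord}(\Theta_0'))$), so the architecture is the right one; but two steps that you compress into a sentence each are precisely where the technical content lies, and as stated they would fail.

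First, a Hecke operator $T$ of \emph{degree zero} is not sufficient for the Abel--Jacobi computation you describe. The ``multiplicative integration'' unfolding rests on Weil symbols, which are only defined for divisors of degree zero, and the limit formulas of Propositions \ref{prop:theta-gamma-div} and \ref{prop:theta-gamma-div edges} pair $(\xi^+)-(\xi^-)$ against \emph{truncations} $\hat{\sD}\cap\cA_{\gamma}^{\le n}$ of the pulled-back divisor along the Bruhat--Tits tree. These truncations have degree zero only when the divisor is of \emph{strong degree zero} (Definition \ref{def: divisor of strong degree 0}), i.e.\ has degree zero on the preimage of every vertex and wide open of $\cT$ --- a much stronger condition than global degree zero. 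Moreover, without strong degree zero the class $\AJ(\Delta)$ admits no canonical lift from $\Hom(\bar{\Gamma},\C_p^\times)/j(\bar{\Gamma})$ to $\Hom(\bar{\Gamma},\C_p^\times)$ (Lemma \ref{lemma: lift of AJ divisor strong degree 0}), so the individual coefficients of your series are only defined modulo periods and modulo the choice of branch of $\log_p$. The paper resolves this by taking $T$ to annihilate all of $\mathrm{Funct}(\Gamma\backslash\cT_0,\Z)$, not merely the constants: by Lemma \ref{lemma: div strong degree 0 and Hecke operators} this makes $T(\Phi)$ \emph{convenient}, and Proposition \ref{prop: j(D) is a p adic unit} then shows the canonical values are $p$-adic units, so that $\log_p$ of them is unambiguous. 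This reduction (Proposition \ref{prop:reduce-to-convenient}) is a step your sketch skips entirely.

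Second, the family $\Theta_k$ cannot be constructed from the data $(\omega,R,T)$, despite the introduction's loose phrasing which you echo. The construction of Section \ref{subsec: p adic family Thetak} requires a \emph{hyperbolic element} $\gamma\in\Gamma$: its isotropic eigenvectors $w_j^\pm$ furnish the harmonic polynomials $\langle w_j^\pm,v\rangle^k$ that give the weight-$(k+3/2)$ theta series, and the translation of $\gamma$ along its geodesic is what makes $\Theta_0$ telescope to zero (Lemma \ref{lem: vanishing}). Correspondingly, the paper never works with abstract formal logarithms $\log_\omega$; it evaluates the canonical homomorphisms $j_{\sD_\Phi(D)_e}\cdot j_{\sD_\Phi(D)_o}$ at hyperbolic $\gamma$, applies $\log_p$, and proves $\log_\gamma(G_\Phi^+)=2\,\mathrm{pr}_1(e_{\ord}(\Theta_0'))$ coefficient by coefficient, with a Vandermonde argument to isolate the $U_{p^2}=1$ eigenspace (Theorem \ref{thm: log(GPhi(gamma)) = pr1(eordTheta_0')}); one must also argue that such evaluations suffice to detect modularity. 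Filling these two gaps is exactly what Sections \ref{section: p-adic Abel--Jacobi map}--\ref{section: p-adic deformations of theta series} accomplish, so your proposal should be read as a correct table of contents rather than a proof.
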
 
\begin{remark}
	The divisors described above are compatible under pullback in the following sense.
	Let $\Phi$ be a Schwartz--Bruhat function on $V_{\A^{p,\infty}}$ invariant under $\hat{R}^\times$. Suppose that $R'$ is an Eichler $\Z[1/p]$-order contained in $R$, denote by $\Gamma'$ the group of reduced norm $1$ units in $R'$ and consider $\pi\colon \Gamma' \backslash \sH_p \to \Gamma \backslash \sH_p$. Then it can be seen in a similar way as in the proof of \cite[Proposition 5.10]{Ku1} that, for every $D \in \cD_S$,
	\[
	\pi^\ast(\Delta_{\Phi, \Gamma}(D)) = \Delta_{\Phi, \Gamma'}(D).
	\] 
	Using that $\pi_\ast \circ \pi^\ast$ is equal to multiplication by the degree of $\pi$ on $\mathrm{Div}(\Gamma \backslash \sH_p)$ and the previous identity, we deduce that $R$ can be replaced by $R'$ 
in the proof of  Theorem \ref{thm:main p-adic}.
In particular, we will assume from now on that $\bar{\Gamma}:=\Gamma / \{\pm 1\}$ is torsion-free by choosing an appropriate level $N^+$.
This will simplify some calculations as the group $\bar{\Gamma}$ will act freely on $\mathcal H_p$ and $\mathcal T$. 
In particular, it is a free group on finitely many generators.
Moreover, under this assumption the coefficients of the divisors $\Delta_{\Phi, \Gamma}(D)$ are integral.
\end{remark}

\subsection{Hecke action on divisors}
Let $\T^N$ be the integral Hecke algebra away from $N$, which is generated by the standard generators $\{T_\ell\}_{\ell \nmid N}$ (see \cite[Section 1.2]{JL} for its definition). We conclude the section describing the action of $\T^N$ on the divisors as well as on the space of $\hat{R}^\times$-invariant Schwartz--Bruhat functions. 
Let $\ell$ be a prime not dividing $N$ and fix $\alpha \in B^\times \cap R$ an element of reduced norm $\ell$.
Consider the maps
\[
\Gamma \backslash \sH_p \xleftarrow{\pi_1} (\alpha^{-1} \Gamma \alpha \cap \Gamma) \backslash \sH_p \xlongrightarrow{\alpha} (\Gamma \cap \alpha \Gamma \alpha^{-1}) \backslash \sH_p \xlongrightarrow{\pi_2} \Gamma \backslash \sH_p. 
\]
Then, define the action of the Hecke operator $T_\ell$ on divisors as
\[
T_\ell (\Delta) \defeq (\pi_{2,\ast} \circ \alpha \circ \pi_1^\ast) (\Delta)\quad \mbox{for }\Delta\in \Div(\Gamma\backslash \sH_p).
\]
On the other hand, the action of on $\hat{R}^\times$-invariant Schwartz--Bruhat functions is determined as follows. If $\Gamma = \sqcup_{j}(\Gamma \cap \alpha \Gamma \alpha^{-1})\delta_j$ for $\{ \delta_j\}_j \subset \Gamma$ we define
\[
T_\ell (\Phi) \defeq  \sum_{j} \Phi\cdot (\alpha^{-1}\delta_j),
\]
where if $\beta \in B^\times$, $\Phi \cdot \beta(v) \defeq \Phi(\beta v \beta^{-1})$. Note that, since $\ell \nmid N$, $R \cap \alpha R \alpha^{-1}$ is an Eichler $\Z[1/p]$-order. Hence, by strong approximation, the double coset space $\left( \hat{R} \cap \alpha \hat{R} \alpha^{-1} \right)^\times \backslash \hat{B}^\times   / B^\times$ has precisely one element. Using this, together with the fact that $ R  \cap \alpha R \alpha^{-1}$ has an element of reduced norm $p$ (\cite[Lemma 1.5]{BD1}), we deduce: for the same $\{\delta_j\}_{j} \subset \Gamma$ as above $\hat{R}^\times = \sqcup_j (\hat{R} \cap \alpha \hat{R}^\times \alpha^{-1}) \delta_j$. Hence, $\hat{R}^\times \alpha^{-1}\hat{R}^\times = \sqcup_j \hat{R}^\times \alpha^{-1}\delta_j$ and it follows from there that $T_\ell (\Phi)$ is $\hat{R}^\times$-invariant.
It follows from this description that the Hecke action preserves the subspace of special Schwartz--Bruhat functions.
\begin{lemma}\label{lemma: Hecke action divisors and Schwartz--Bruhat functions}
	Let $\Phi$ be a Schwartz--Bruhat function on $V_{\A^{p,\infty}}$ invariant under $\hat{R}^\times$.
	The following identity of divisors holds:
	\[
	T_\ell(\Delta_{\Phi, \Gamma}(D)) = \Delta_{T_\ell (\Phi), \Gamma}(D).
	\]	
\end{lemma}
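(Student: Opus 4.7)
The plan is to prove the identity by factoring the Hecke correspondence as $T_\ell = \pi_{2,\ast}\circ\alpha\circ\pi_1^\ast$ and tracking the Heegner divisor $\Delta_{\Phi,\Gamma}(D)$ through each of the three arrows. Introduce the intermediate groups $\Gamma_1\defeq\alpha^{-1}\Gamma\alpha\cap\Gamma$ and $\Gamma_2\defeq\Gamma\cap\alpha\Gamma\alpha^{-1}$, so that $\alpha\Gamma_1\alpha^{-1}=\Gamma_2$. The torsion-free assumption on $\bar\Gamma$ is crucial throughout: for any $v\in V\setminus\{0\}$, the stabilizer $\mathrm{Stab}_{\bar\Gamma}(v)$ is contained in the image of $\Q(v)^\times$ in $\bar B^\times$ (since $v$ has imaginary quadratic centralizer), and hence is a finite subgroup of the torsion-free group $\bar\Gamma$ and therefore trivial. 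In particular, no fractional multiplicities appear in any of the calculations below.

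First, since $\Phi$ is $\hat R^\times$-invariant and $\Gamma$ embeds diagonally in $\hat R^\times$, the function $\Phi$ is invariant under $\Gamma$-conjugation, and a change of variables $v'=\gamma v\gamma^{-1}$ in the fiber sum shows that
\[
\pi_1^\ast\Delta_{\Phi,\Gamma}(D) = \Delta_{\Phi,\Gamma_1}(D).
\]
Next, the action of $B_{\Q_p}^\times$ on $\sH_p$ via M\"obius transformations intertwines orthogonality with the conjugation action on $V$: explicitly, $\alpha\cdot\Delta(v) = \Delta(\alpha v\alpha^{-1})$ for every $v$. Substituting $w=\alpha v\alpha^{-1}$ in the sum defining $\Delta_{\Phi,\Gamma_1}(D)$ yields
\[
\alpha_\ast\Delta_{\Phi,\Gamma_1}(D) = \Delta_{\Phi\cdot\alpha^{-1},\Gamma_2}(D),
\]
with $(\Phi\cdot\alpha^{-1})(w)=\Phi(\alpha^{-1}w\alpha)$ as defined in the text.

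The delicate step is the final pushforward $\pi_{2,\ast}$: one regroups $\Gamma_2$-orbits on $\{v\in V: Q(v)=D\}$ by their $\Gamma$-orbit. Under the torsion-free hypothesis, each $\Gamma$-orbit $\Gamma\cdot w$ breaks into exactly $[\Gamma:\Gamma_2]$ pairwise distinct $\Gamma_2$-orbits, with representatives $\{\delta_jw\delta_j^{-1}\}_j$ for any system of right coset representatives $\Gamma=\sqcup_j\Gamma_2\delta_j$. Summing
\[
(\Phi\cdot\alpha^{-1})(\delta_jw\delta_j^{-1})=\Phi(\alpha^{-1}\delta_jw\delta_j^{-1}\alpha)
\]
over $j$ recovers exactly $T_\ell\Phi(w)$ by the definition of the Hecke action on Schwartz--Bruhat functions, yielding $\pi_{2,\ast}\Delta_{\Phi\cdot\alpha^{-1},\Gamma_2}(D)=\Delta_{T_\ell\Phi,\Gamma}(D)$. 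The main obstacle is the bookkeeping in this last step: one must verify that $\{\delta_jw\delta_j^{-1}\}_j$ is both pairwise $\Gamma_2$-inequivalent and exhaustive as a list of $\Gamma_2$-orbit representatives of $\Gamma\cdot w$. Both properties hinge on the triviality of $\mathrm{Stab}_{\bar\Gamma}(w)$, without which two representatives from distinct cosets could merge into a single $\Gamma_2$-orbit.
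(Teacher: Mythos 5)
Your overall route coincides with the paper's: the proof there consists of exactly the same factorization $T_\ell=\pi_{2,\ast}\circ\alpha\circ\pi_1^\ast$ and the same three intermediate identities
\[
\pi_1^\ast\Delta_{\Phi,\Gamma}(D)=\Delta_{\Phi,\alpha^{-1}\Gamma\alpha\cap\Gamma}(D),\qquad
\alpha\,\Delta_{\Phi,\alpha^{-1}\Gamma\alpha\cap\Gamma}(D)=\Delta_{\Phi\cdot\alpha^{-1},\Gamma\cap\alpha\Gamma\alpha^{-1}}(D),\qquad
\pi_{2,\ast}\Delta_{\Phi\cdot\alpha^{-1},\Gamma\cap\alpha\Gamma\alpha^{-1}}(D)=\Delta_{T_\ell(\Phi),\Gamma}(D),
\]
with the orbit bookkeeping delegated to Kudla's Propositions 5.9 and 5.10; what you do is spell that bookkeeping out. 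However, the stabilizer claim that you call ``crucial throughout'' is false as stated. It is not true that $\mathrm{Stab}_{\bar{\Gamma}}(v)$ is trivial for every $v\in V\setminus\{0\}$: containment in the image of $\Q(v)^\times$ does not force finiteness, because $\Gamma$ consists of the norm-one units of a $\Z[1/p]$-order, not of a $\Z$-order. When $p$ splits in the imaginary quadratic field $\Q(v)$, the group $\Gamma\cap\Q(v)^\times$ contains norm-one $p$-units of infinite order (elements of the form $\pi/\bar{\pi}$ with $\pi\bar{\pi}=p$, up to finite index). These are precisely the elements ``hyperbolic at $p$'' on which Sections 5--7 of the paper are built; each such $\gamma$ fixes the vector spanning its eigenspace $V[1]$, so infinite stabilizers genuinely occur. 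For such a vector $w$, your assertion that $\Gamma\cdot w$ breaks into exactly $[\Gamma:\Gamma_2]$ pairwise distinct $\Gamma_2$-orbits also fails.

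The gap is easily closed, but it must be closed, since the pushforward step leans on it. A vector $v$ has infinite stabilizer exactly when $p$ splits in $\Q(v)$, i.e., when $\sqrt{-Q(v)}\in\Q_p$; but in that case $\Delta(v)=0$ (the paper notes $\Delta(v)\neq 0$ if and only if $\sqrt{-Q(v)}\notin\Q_p$), so all such vectors contribute $0$ to every divisor appearing in your three identities and may be discarded. For the vectors that do contribute, $\Q(v)$ is non-split at $p$, its norm-one $p$-units form a finite group, and torsion-freeness of $\bar{\Gamma}$ then yields the triviality you need. With the stabilizer statement restricted to vectors with $\Delta(v)\neq 0$ and justified this way, your pullback, translation, and pushforward steps all go through and reproduce the paper's proof.
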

\begin{proof}
	Using that $\bar{\Gamma}$ is torsion-free, the proof can be completed by following the next steps, which are proved in a similar way than \cite[Proposition 5.9 and Proposition 5.10]{Ku1}:
	\begin{equation*}
		\begin{split}
			T_\ell (\Delta_{\Phi, \Gamma}(D))  & = (\pi_{2,\ast} \circ \alpha \circ \pi_1^\ast) \left( \Delta_{\Phi, \Gamma}(D) \right) \\ & = (\pi_{2,\ast} \circ \alpha) \left(  \Delta_{\Phi, \alpha^{-1}\Gamma\alpha \cap \Gamma}(D)  \right) \\ & = \pi_{2,\ast} \circ \alpha \Delta_{\Phi \cdot \alpha^{-1}, \Gamma \cap \alpha\Gamma\alpha^{-1}}(D)  \\ & = \Delta_{T_\ell(\Phi), \Gamma}(D).
		\end{split}
	\end{equation*}
\end{proof}
\begin{remark}
	When it is clear from the context that we are viewing $\Phi$ as a $\Gamma$-invariant Schwartz--Bruhat function, we will write $\Delta_\Phi(D)$ (resp.~$G_\Phi$) to denote $\Delta_{\Phi, \Gamma}(D)$ (resp. $G_{\Phi, \Gamma}$).
\end{remark}

\section{Modularity of degrees of Heegner divisors}\label{section: degrees of Heegner divisors}

Fix a special Schwartz--Bruhat function $\Phi$.
In this section, we prove that 
\[
\deg(G_\Phi)(q) = \Phi(0)\deg(\cL^\vee) + \sum_{D\in \cD_S} \deg(\Delta_\Phi(D))q^D
\]
is a modular form by comparing $\deg(G_\Phi)$ to a genus theta series attached to $V$.

Fix $L_0, \dots ,L_r$ unimodular $\Z_p$-lattices in $V_{\Q_p}$ that give a set of representatives of $\Gamma \backslash \cT_0$.
For every $i$ consider the ternary theta series attached to the Schwartz--Bruhat function $\Phi \otimes 1_{L_i}$ on $V_{\A^\infty}$
\[
\Theta_i = \sum_{v \in V} \Phi(v)1_{L_i}(v)q^{Q(v)}.
\]
Note that theta series $\Theta_i$ only depends on the class of $L_i$ in $R^\times \backslash \cT_0$. Since $\Phi$ is invariant under $K_0(4N/p)^{(p)}$ and $L_i$ is a unimodular $\Z_p$-lattice, $\Phi \otimes 1_{L_i}$ is invariant under $K_0(4N/p)$. It is well known that $\Theta_i$ is a modular form of weight $3/2$ and level $\Gamma_0(4N/p)$ for every $i$ (see for example Theorem 4.1 of \cite{BoAutomorphic}).
Define the modular form
\[E_\Phi \defeq \sum_{i = 1}^r \Theta_{i}.\]
The following lemma relates the degrees of those $\Delta_\Phi(D)$ with $\ord_{p}(D)\in\{0,1\}$ with the corresponding Fourier coefficients of $E_\Phi$.
\begin{lemma}\label{prop: relation aD(E) and degrees}
	Let $D \in \cD_S$, then:
	\begin{enumerate}
		\item if $\ord_p(D)=0$, then $2a_D(E_\Phi) = \deg(\Delta_\Phi(D))$, and
		\item if $\ord_p(D) = 1$ then, $a_{D}(E_\Phi) = \deg(\Delta_\Phi(D))$.
	\end{enumerate}
\end{lemma}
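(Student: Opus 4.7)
The plan is to rewrite both sides as weighted sums over $\Gamma$-orbits of $v\in V$ with $Q(v)=D$ and read off the constant relating them. For the left-hand side, since $\bar\Gamma$ is torsion-free the stabilizer in $\Gamma/\{\pm 1\}$ of any such $v$ is trivial: the centralizer of $v$ in $B^\times$ is the imaginary quadratic field $\Q(v)\cong\Q(\sqrt{-D})$, whose reduced-norm-one units in any $\Z[1/p]$-order form a finite group containing $\{\pm 1\}$, which becomes trivial in $\bar\Gamma$. Combined with $\deg\Delta(v)=2$ for $v$ of length $D\in\cD_S$, this yields
\[
\deg\Delta_\Phi(D) = 2\sum_{\substack{[v]\in\Gamma\backslash V \\ Q(v)=D}}\Phi(v).
\]

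For the right-hand side, first note that $\Phi$ is $\Gamma$-invariant because $\Gamma\subseteq\hat R^\times$. Unfolding
\[
a_D(E_\Phi) = \sum_{i=0}^{r}\sum_{\substack{v\in V,\, Q(v)=D\\ v\in L_i}}\Phi(v)
\]
and reorganizing by $\Gamma$-orbits of $v$ (using the triviality of stabilizers) gives $a_D(E_\Phi) = \sum_{[v_0]}\Phi(v_0)\cdot N(v_0)$, where $N(v_0)$ denotes the number of pairs $(i,\bar\gamma)\in\{0,\ldots,r\}\times\Gamma/\{\pm 1\}$ satisfying $v_0\in\gamma^{-1}L_i\gamma$. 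Since $\bar\Gamma$ acts freely on $\cT_0$ and the $L_i$ are orbit representatives, the map $(i,\bar\gamma)\mapsto\gamma^{-1}L_i\gamma$ is a bijection onto $\cT_0$. Therefore $N(v_0)$ equals the number of unimodular $\Z_p$-lattices in $V_{\Q_p}$ containing $v_0$, which by Lemma \ref{lem: howmanylattices} is $1$ when $\ord_p(D)=0$ and $2$ when $\ord_p(D)=1$.

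Substituting these values into the formula for $a_D(E_\Phi)$ and comparing with the expression for $\deg\Delta_\Phi(D)$ from the first paragraph yields the two asserted identities. The whole proof is essentially a double-counting argument over pairs $(L,v)$ with $v\in L$ and $Q(v)=D$, and the only nontrivial input is Lemma \ref{lem: howmanylattices}. No substantial obstacle is anticipated, as the torsion-freeness of $\bar\Gamma$, and with it the freeness of its action on $\cT_0$, has already been arranged in the preceding remark by shrinking the level if needed.
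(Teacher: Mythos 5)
Your proof is correct and follows essentially the same route as the paper: both arguments double-count pairs consisting of a vector $v$ with $Q(v)=D$ and a unimodular $\Z_p$-lattice containing it, using the freeness of the $\bar\Gamma$-action on $\cT_0$ and Lemma \ref{lem: howmanylattices} to see that the unfolded sum for $a_D(E_\Phi)$ hits each $\Gamma$-orbit once when $\ord_p(D)=0$ and twice when $\ord_p(D)=1$. Your justification of trivial stabilizers is a welcome elaboration (the paper delegates this to the earlier remark on integrality of coefficients); just note that finiteness of the norm-one unit group of a $\Z[1/p]$-order in $\Q(\sqrt{-D})$ uses that $p$ is non-split in this field, which holds precisely because $D\in\cD_S$ forces $\sqrt{-D}\notin\Q_p$.
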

\begin{proof}
Since $\bar{\Gamma}$ is torsion-free, it does not stabilize any vertex of $\cT_0$.
	Thus, Lemma \ref{lem: howmanylattices} implies that
	\begin{align*}
	\bigsqcup_{i=1}^{r} \left\{ v \in V \cap L_i \ \middle| \  Q(v) = D \right\} &\xlongrightarrow{\sim} \left\{ v \in \Gamma \backslash V \ \middle| \  Q(v) = D  \right\}\\
	v &\mapstoo [v]
	\end{align*}
	is bijective if $\ord_p(D) = 0$ and surjective and two-to-one if $\ord_p(D) = 1$, which implies the assertion.
\end{proof}

Let $M \in \Z_{>0}$ and $k$ such that $2k \in \Z_{>0}$. When $k$ is a half-integer, we will always assume that the level $M$ is divisible by $4$.
Denote by $M_{k}(\Gamma_0(M))$ (resp.~$S_{k}(\Gamma_0(M))$) the space of modular forms (resp.~cusp forms) forms of weight $k$ and level $\Gamma_0(M)$.
Consider the subspaces $M_{k}(\Gamma_0(M), \Z)$ (resp.~$S_{k}(\Gamma_0(M), \Z)$) of forms whose $q$-expansion has integral coefficients and for any abelian group put
\[
M_{k}(\Gamma_0(M), A) \defeq M_k(\Gamma_0(M), \Z) \otimes_\Z A\ \  \mbox{and}\ \ S_{k}(\Gamma_0(M), A) \defeq S_k(\Gamma_0(M), \Z) \otimes_\Z A.
\]
We view these as subspaces of the group $A[[q]]$ of formal $q$-series with coefficients in $A$.
By \cite[Lemma 8]{SerreStark} there exists a basis of $M_{k}(\Gamma_0(M))$ consisting of forms with integral coefficients.
Hence, the natural homomorphisms
\begin{align*}
M_{k}(\Gamma_0(M), \C)\xlongrightarrow{\sim}M_{k}(\Gamma_0(M))\ \ \mbox{and}\ \ S_{k}(\Gamma_0(M), \C)\xlongrightarrow{\sim}S_{k}(\Gamma_0(M))
\end{align*}
are bijective. 
We now introduce several operators acting on $M_{3/2}(\Gamma_0(M), A)$.
For that let
\[
f = \sum_{n\geq 0} a_n q^n \in A[[q]]
\]
be a formal $q$-series with coefficients in $A$.
Define
\begin{align*}
T_{p^2}(f) &\defeq \sum_{n \geq 0} \left( a_{p^2n} + \left( \frac{-n}{p}\right) a_n + p a_{n/p^2}\right)q^n,\\
U_{p^2}(f)  &\defeq \sum_{n \geq 0} a_{p^2n}q^n,
\end{align*}
and put $V_{p^2}f \defeq T_{p^2}f  - U_{p^2}f$.
Now suppose that $f\in M_{3/2}(\Gamma_0(M), R)$ is a modular form.
Then $T_{p^2}f\in M_{3/2}(\Gamma_0(M), R)$ if $p\nmid M$ and, in case $p\mid M$, we have $U_{p^2}f\in M_{3/2}(\Gamma_0(M), R)$.

\begin{proposition}\label{prop: Tp^2 eigenvalue of E}
	The modular form $E_\Phi$ is an Eisenstein series of level $\Gamma_0(4N/p)$.
	In particular, it satisfies $T_{p^2}(E_\Phi) = (p+1)E_\Phi$.
\end{proposition}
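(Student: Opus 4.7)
The plan is to exhibit $E_\Phi$ as a (weighted) genus theta series and then invoke the Siegel--Weil formula for the Eisenstein claim, while computing the $T_{p^2}$-eigenvalue directly from the classical $p$-neighbor formula combined with the $(p+1)$-regularity of the Bruhat--Tits tree.

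For the Eisenstein claim, I would first reinterpret each $\Theta_i$ as the Schwartz--Bruhat-weighted theta series of the global $\Z$-lattice $M_i\subset V$ determined adelically by $M_i\otimes\Z_p = L_i$ and $M_i\otimes_{\Z}\hat\Z^{(p)}= \hat R_0$. Since $V$ is definite, so that $\mathrm{Spin}(V_\R)$ is compact, strong approximation for $\mathrm{Spin}(V)$ away from $p$ identifies the $\Gamma$-orbits on $\cT_0$ with the set of isomorphism classes of $\Z$-lattices in the genus of $M_0$: the unimodularity of the $L_i$ at $p$ together with the prescribed local structure away from $p$ pins down the local genus at every finite place, and the freeness of the $\bar\Gamma$-action ensures that no automorphism weights are lost. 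Consequently, $E_\Phi$ is a weighted genus theta series, and Siegel's theorem (the Siegel--Weil formula for a definite ternary form) identifies it with an Eisenstein series of weight $3/2$ and level $\Gamma_0(4N/p)$.

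For the Hecke eigenvalue, the key input is the classical $p$-neighbor formula: when $L$ is unimodular at $p$, the Hecke operator $T_{p^2}$ acts on the corresponding weight $3/2$ theta series by
\[
T_{p^2}(\Theta_L)=\sum_{L'\text{ $p$-neighbor of }L}\Theta_{L'},
\]
with the sum containing exactly $p+1$ terms indexed by isotropic lines in $L/pL$. Summing this identity over a complete set of $\Gamma$-orbit representatives $L_0,\ldots,L_r$ and regrouping the right-hand side according to $\Gamma$-orbit, the resulting coefficient matrix becomes the adjacency operator for $T_{p^2}$ on the free module $\Z[\Gamma\backslash\cT_0]$. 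Since the tree is $(p+1)$-regular and $\bar\Gamma$ acts freely, this matrix has every row-sum equal to $p+1$, so applying it to the all-ones vector produces $p+1$ times the all-ones vector. This is precisely $T_{p^2}(E_\Phi)=(p+1)E_\Phi$.

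The main obstacle I anticipate is the careful adelic bookkeeping in the Siegel--Weil step: one must verify that strong approximation exhausts the full genus (and not merely a proper union of spinor genera), that the twist by the special Schwartz--Bruhat function $\Phi$ does not disturb the Eisenstein conclusion, and that the local factor at $p$ is compatible with the level dropping from $4N$ to $4N/p$. An alternative route to the Eisenstein property, which avoids these subtleties, is to deduce it \emph{a posteriori} from the Hecke calculation: the eigenvalue $p+1$ exceeds the Ramanujan--Deligne bound $2\sqrt{p}$ available for weight $3/2$ cuspidal Hecke eigenforms via Shimura's lift to weight $2$, so no cuspidal component can survive in $E_\Phi$, forcing it to lie entirely in the Eisenstein subspace.
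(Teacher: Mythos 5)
Your proposal is correct in substance, and it is a hybrid: for the Eisenstein property you take the paper's own route, while for the eigenvalue you give a genuinely different argument. On the Eisenstein side, one correction: strong approximation identifies the classes of the genus with $R^\times\backslash\cT_0$, not with $\Gamma\backslash\cT_0$. Since $\Gamma$ consists of norm-one elements it preserves the bipartition of $\cT$, whereas $R^\times$ contains elements whose reduced norm has odd $p$-adic valuation and which therefore swap parities; so $\Gamma\backslash\cT_0\to R^\times\backslash\cT_0$ is two-to-one and $E_\Phi$ is \emph{twice} the genus sum. The paper makes exactly this point via $[R^\times:R^\times_+]=2$; the factor $2$ is harmless for the conclusion, but your claimed bijection is off. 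On the eigenvalue side, the paper simply reads off $T_{p^2}(E_\Phi)=(p+1)E_\Phi$ as a consequence of Eisenstein-ness, whereas you compute it directly from the $p$-neighbour identity and the combinatorics of $\Gamma\backslash\cT$. That computation is sound here: the identity $T_{p^2}\Theta_L=\sum_{L'}\Theta_{L'}$ does hold for the $\Phi$-weighted theta series (the coefficient check uses that the discriminant of $(V_{\Q_p},Q)$ is a square, so the middle term $\left(\frac{-n}{p}\right)a_n$ comes out right, and uses $\Phi(pv)=\Phi(v)$, property (3) of special functions); and in the regrouping what you actually need are the \emph{column} sums of the matrix $A_{ij}=\#\{\text{neighbours of }L_i\text{ in }\Gamma L_j\}$, not the row sums $\sum_j A_{ij}=p+1$ --- these agree because freeness of the $\bar{\Gamma}$-action makes $A$ symmetric, which is precisely where your appeal to freeness belongs. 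Your route is self-contained at $p$ and avoids any input about the Hecke eigensystem on the half-integral-weight Eisenstein space; the paper's is shorter given Siegel--Weil.

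By contrast, your proposed fallback --- deducing Eisenstein-ness from the eigenvalue via the Ramanujan bound --- has a genuine gap: Ramanujan fails for weight $3/2$ cusp forms in general. The unary theta series are cuspidal counterexamples; for instance $\sum_{n\geq 1}\left(\frac{-4}{n}\right)nq^{n^2}\in S_{3/2}(\Gamma_0(64))$ has $T_{p^2}$-eigenvalue $\left(\frac{-1}{p}\right)(p+1)$, which equals exactly $p+1$ whenever $p\equiv 1\pmod 4$. So an eigenvalue of $p+1$ does not by itself exclude a cuspidal component. The argument can be repaired at this particular level: a unary theta series with trivial nebentypus and level dividing $4N/p$ would require $tf_\psi^2\mid N/p$ for an odd character $\psi$ of conductor $f_\psi>1$, which is impossible since $N/p$ is odd and squarefree; hence no such forms occur in $S_{3/2}(\Gamma_0(4N/p))$, and the remaining cusp forms do satisfy Ramanujan via the Shimura lift. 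But this repair must be stated, and once it is, the detour is no simpler than invoking Siegel--Weil directly.
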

\begin{proof}
	Observe that, if we denote by $R^\times_+$ the subgroup of $R^\times$ consisting of units whose reduced norm has even $p$-adic valuation, we have $[R^\times:R^\times_+] = 2$. It follows from there that $r$ is even and that we can suppose that the representatives $\{L_i\}_i$ of $\Gamma \backslash \mathcal T_0$ are arranged so that $\{ L_1, \dots , L_{r/2}\}$ give a set of representatives of $R^\times \backslash \mathcal T_0$. Then, since $\Theta_{L_i}$ only depends on the class of $L_i$ in $R^\times \backslash \mathcal T_0$, we have $E_\Phi = 2 \sum_{i= 1}^{r/2} \Theta_{L_i}$.
Since all Eichler $\Z[1/p]$-orders of level $N^+$ in $B$ are conjugate, the set $\{ L_1, \dots , L_{r/2}\}$ forms a genus of integral quadratic forms.
The Siegel--Weil formula (see \cite[Theorem 4.1 (ii)]{K}) states that the sum of theta functions in a given genus - and therefore $E_\Phi$ - is an Eisenstein series.
\end{proof}
Since $\Phi$ is special, \eqref{eq: special} implies that
\[
U_{p^2}(G_\Phi(q))=G_{\Phi}(q).
\]
We proceed to modify $E_\Phi$ so that it becomes invariant under $U_{p^2}$ as well.
For that, put $E_{\Phi}^{1} \defeq E - V_{p^2}(E) \in M_{3/2}(\Gamma_0(4N))$.
\begin{corollary}\label{lem: Eisensteinstabilization}
	We have $U_{p^2}(E_{\Phi}^{1})  = E_{\Phi}^{1}$.
\end{corollary}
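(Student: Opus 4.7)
The plan is to reduce the identity $U_{p^2}(E_{\Phi}^{1})=E_{\Phi}^{1}$ to the eigenvalue relation $T_{p^2}(E_\Phi)=(p+1)E_\Phi$ already established in Proposition \ref{prop: Tp^2 eigenvalue of E}, via a formal calculation with $q$-expansions that identifies the composition $U_{p^2}\circ V_{p^2}$ on $\Q[[q]]$.

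The key lemma I would first record is the operator identity $U_{p^2}\circ V_{p^2} = p\cdot\mathrm{id}$ on $A[[q]]$ for any abelian group $A$. This is immediate from the definitions: writing $f=\sum_{n\geq 0} a_n q^n$, one has
\[
V_{p^2}(f)=\sum_{n\geq 0}\left(\left(\tfrac{-n}{p}\right)a_n + p\, a_{n/p^2}\right)q^n,
\]
and applying $U_{p^2}$ extracts the $p^2n$-th coefficient, at which $\left(\tfrac{-p^2n}{p}\right)=0$ while $a_{p^2n/p^2}=a_n$. Hence the $n$-th coefficient of $U_{p^2}V_{p^2}(f)$ equals $p a_n$, as claimed.

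Applying this to $E_{\Phi}^{1}=E_\Phi - V_{p^2}(E_\Phi)$ gives
\[
U_{p^2}(E_{\Phi}^{1}) = U_{p^2}(E_\Phi) - U_{p^2}V_{p^2}(E_\Phi) = U_{p^2}(E_\Phi) - p\,E_\Phi.
\]
It therefore suffices to check the identity $U_{p^2}(E_\Phi)-pE_\Phi = E_\Phi - V_{p^2}(E_\Phi)$. Rewriting $V_{p^2}=T_{p^2}-U_{p^2}$, this is equivalent to
\[
T_{p^2}(E_\Phi) = (p+1)E_\Phi,
\]
which is precisely the content of Proposition \ref{prop: Tp^2 eigenvalue of E}. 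There is no real obstacle in this argument; the only subtle step is the formal computation of $U_{p^2}\circ V_{p^2}$, which ultimately reflects the standard fact that $V_{p^2}$ stabilises an eigenform to one of level divisible by $p$.
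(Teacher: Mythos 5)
Your proof is correct and follows essentially the same route as the paper's: both rest on the identity $U_{p^2}\circ V_{p^2}=p\cdot\mathrm{id}$ (verified on $q$-expansions) and then reduce to $T_{p^2}(E_\Phi)=(p+1)E_\Phi$ from Proposition \ref{prop: Tp^2 eigenvalue of E} via $V_{p^2}=T_{p^2}-U_{p^2}$. The only difference is that you write out the coefficient computation for $U_{p^2}V_{p^2}$ explicitly, which the paper leaves as a remark.
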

\begin{proof}
	Since $(U_{p^2}\circ V_{p^2}) (E) = pE$ (which can be verified directly from the description of $U_{p^2}$ and $V_{p^2}$ given above), we have
	\[
	U_{p^2} (E_{\Phi}^{1}) = U_{p^2}(E) - pE.
	\]
	Using that $U_{p^2} = T_{p^2} - V_{p^2}$ and Proposition \ref{prop: Tp^2 eigenvalue of E} yields the desired result.
\end{proof}
We can finally prove the main result of this section. 
\begin{proposition}\label{prop: degGPhi is modular}
	The equality $\deg(G_\Phi)(q) = E_{\Phi}^{1}$ holds.
	In particular, $\deg(G_\Phi)(q)$ is an Eisenstein series of weight $3/2$ and level $\Gamma_0(4N)$.
\end{proposition}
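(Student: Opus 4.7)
The plan is to verify the identity of formal $q$-series coefficient by coefficient. Since $U_{p^2}$ fixes both sides---the right-hand side by Corollary~\ref{lem: Eisensteinstabilization}, and the left-hand side because the special property~\eqref{eq: special} of $\Phi$ yields $\deg(\Delta_\Phi(p^2D))=\deg(\Delta_\Phi(D))$---it suffices to match the constant term and the coefficients at $D\in\Z_{>0}$ with $\ord_p(D)\in\{0,1\}$.

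For such $D$, unwinding $E_\Phi^1=E_\Phi-V_{p^2}E_\Phi=E_\Phi-(T_{p^2}-U_{p^2})E_\Phi$ using the explicit formula for $V_{p^2}$ gives
\[
a_D(E_\Phi^1)=\left(1-\left(\tfrac{-D}{p}\right)\right)a_D(E_\Phi)-p\,a_{D/p^2}(E_\Phi),
\]
where the last summand vanishes whenever $\ord_p(D)\leq 1$. If $\ord_p(D)=0$ and $D\in\cD_S$, then $-D$ is a non-square in $\Q_p$ (by the local characterization of $\cD_S$ stated just before Lemma~\ref{lem: howmanylattices}), so $\left(\tfrac{-D}{p}\right)=-1$, and Lemma~\ref{prop: relation aD(E) and degrees}(1) yields $a_D(E_\Phi^1)=2a_D(E_\Phi)=\deg(\Delta_\Phi(D))$; if $D\notin\cD_S$, then either $V$ fails to represent $D$, in which case both coefficients vanish, or $-D$ is a square in $\Q_p$, giving $\left(\tfrac{-D}{p}\right)=1$ and again $a_D(E_\Phi^1)=0=\deg(\Delta_\Phi(D))$. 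If $\ord_p(D)=1$ then $\left(\tfrac{-D}{p}\right)=0$, and since $-D$ has odd $p$-adic valuation it cannot be a square in $\Q_p$, so $V$-representability of $D$ is equivalent to $D\in\cD_S$; Lemma~\ref{prop: relation aD(E) and degrees}(2) then gives $a_D(E_\Phi^1)=a_D(E_\Phi)=\deg(\Delta_\Phi(D))$.

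For the constant term, each theta series $\Theta_i$ contributes $\Phi(0)$, hence $a_0(E_\Phi)=h\,\Phi(0)$ with $h\defeq|\Gamma\backslash\cT_0|$, while $a_0(V_{p^2}E_\Phi)=p\,a_0(E_\Phi)$, yielding $a_0(E_\Phi^1)=(1-p)h\,\Phi(0)$. It remains to verify $\deg(\cL^\vee)=(1-p)h$: since the torsion-free group $\bar\Gamma$ acts freely on the $(p+1)$-regular tree $\cT$, the quotient graph has $h$ vertices and $(p+1)h/2$ edges, hence Euler characteristic $h(1-p)/2$; by Mumford uniformization the genus of $X$ equals $g=1-h(1-p)/2$, so $\deg(\cL^\vee)=2-2g=(1-p)h$, as required. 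The main obstacle is the bookkeeping for indices $D\notin\cD_S$ with $a_D(E_\Phi)\neq 0$, which rests on the equivalence between the two characterizations of $\cD_S$---global representability by $\sV$ versus $V$-representability combined with $-D$ being a non-square in $\Q_p$; once this input is cleanly deployed the remaining verifications are formal.
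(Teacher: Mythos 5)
Your proposal is correct and follows essentially the same route as the paper's proof: both reduce, via the $U_{p^2}$-invariance of $E_\Phi^1$ (Corollary~\ref{lem: Eisensteinstabilization}) and of $\deg(G_\Phi)$ (speciality of $\Phi$), to matching the constant term and the coefficients with $\ord_p(D)\in\{0,1\}$, then split according to $\left(\tfrac{-D}{p}\right)$ using Lemma~\ref{prop: relation aD(E) and degrees}, and finally compute $a_0(E_\Phi^1)=\Phi(0)(1-p)\#(\Gamma\backslash\cT_0)=\Phi(0)\deg(\cL^\vee)$ from the genus of the $(p+1)$-regular quotient graph. Your treatment of the indices $D\notin\cD_S$ (distinguishing failure of $V$-representability from $-D$ being a square in $\Q_p$) is slightly more explicit than the paper's, which disposes of that case with the remark that $G_\Phi$ is supported on $\cD_S$, but the substance is identical.
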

\begin{proof}
	By Lemma \ref{lem: Eisensteinstabilization}, the equality $U_{p^2}(E_{\Phi}^{1}) = E_{\Phi}^{1}$ holds.
	On the other hand, since $\Phi$ is special we have $\deg(\Delta_\Phi(Dp^2)) = \deg(\Delta_\Phi(D))$ for all $D$.
	Hence, it is enough to verify that the Fourier coefficients of $E_{\Phi}^{1}$ and of $\deg(G_\Phi)$ are equal in the following cases:
	\begin{itemize}
	\item If $\ord_p(D) = 1$, the second point of Lemma \ref{prop: relation aD(E) and degrees} implies
		\[
		a_{D}(E_{\Phi}^{1}) = a_{D}(E_\Phi) = \deg(\Delta_\Phi(D)).
		\]
		
		\item If $\ord_p(D) = 0$ and $\left(\frac{-D}{p} \right) = -1$, the first point of Lemma \ref{prop: relation aD(E) and degrees} gives
		\[
		a_D(E_{\Phi}^{1}) = 2a_D(E_\Phi) = \deg(\Delta_\Phi(D)).
		\]

		\item If $\ord_p(D) = 0$ and $\left(\frac{-D}{p} \right) = 1$, one calculates
		\[
		a_D(E_{\Phi}^{1}) = a_D(E_\Phi) - a_D(E_\Phi) = 0.
		\]
		On the other hand, we have that $\Delta_\Phi(D) = 0$, as $G_\Phi(q)$ is supported only on non-negative integers that belong to $\cD_S$.
		
		\item If $D = 0$, we have $a_0(E_{\Phi}^{1}) = \Phi(0) (1-p) r$, where we recall that $r = \#(\Gamma \backslash \mathcal T_0)$. Now, since $\bar{\Gamma}$ is torsion-free, it follows that $\Gamma \backslash \mathcal T$ is a $(p+1)$-regular graph.
		Thus, we readily compute its first Betti number
		\[
		g(\Gamma \backslash \mathcal T) = 1- \# (\Gamma \backslash \mathcal T_0) + \#(\Gamma \backslash \mathcal T_1) = 1 - r + \frac{p+1}{2} r,
		\]
		which by \cite[Theorem 5.4.1]{FvdP} equals the genus $g$ of $X$.
		The degree of the cotangent bundle of $X$ is equal to $2g-2$.
		This implies that 
		\[
		a_0(E_{\Phi}^{1}) = \Phi(0)(1-p) r = \Phi(0)(2 - 2g) = \Phi(0)\mathrm{deg}(\mathcal L^\vee).
		\]
	\end{itemize}
	Therefore, we obtain the desired equality $\deg(G_\Phi) = E_{\Phi}^{1}$.
\end{proof}

\section{The Abel--Jacobi map}\label{section: p-adic Abel--Jacobi map}

Because the curve $X$ is a {\em Mumford curve} over $\Q_{p^2}$, its Jacobian, denoted by $J$, has purely toric reduction and admits a concrete description in terms of equivalence classes of automorphy factors of rigid meromorphic functions in $\sH_p$. In this section, we explain how the class in $J$ of a degree zero divisor can be described explicitly in these terms. Then, we introduce the notion of divisors of strong degree $0$, for which there exists a preferred choice of automorphy factor describing its class in $J$. Finally, we use this notion to reduce Theorem \ref{thm:main p-adic} to the case where all divisors appearing as coefficients of the generating series $G_\Phi$ have strong degree $0$.

\subsection{Definition and properties of the Abel--Jacobi map} A {\em formal divisor} on $\sH_p$ is a formal, possibly infinite $\Z$-linear combination of points in $\sH_p$. A formal divisor
\[
\hat\sD = \sum_{x\in \sH_p} m_x (x)
\]
is said to be {\em discrete} if the formal divisor
\[
\hat \sD\cap\cA \defeq \sum_{x\in \cA} m_x (x)
\]
is an actual divisor, i.e., involves a finite sum for all standard affinoid subsets $\cA\subset \sH_p$. The set of all discrete formal divisors on $\sH_p$ is denoted by ${\Div}^\dag(\sH_p)$.
Denote by $\Div(\sH_p)$ (resp.~$\Div^0(\sH_p))$ the subset of finite divisors (resp.~finite divisors of degree $0$).
 The quotient map $\pi\colon\sH_p \to \Gamma\backslash \sH_p$ induces pullback and pushforward maps
\[ 
\pi_\ast\colon {\Div}(\sH_p) \too {\Div}(\Gamma\backslash\sH_p),
\qquad \pi^\ast\colon {\Div}(\Gamma\backslash \sH_p) \too {\Div}^\dag(\sH_p),
\]
since $\Gamma$ acts on $\sH_p$ with discrete orbits. Given $\Delta\in {\Div}(X(\C_p))$, let $ \sD\in {\Div}(\sH_p)$ 
and $\hat\sD\in {\Div}^\dag(\sH_p)$ be (formal) divisors satisfying
\begin{equation}
\label{eqn:defsdsdhat}
 \pi_\ast(\sD) = \Delta, \quad  \hat \sD = \pi^\ast(\Delta).
 \end{equation}
The divisor $ \sD$ is not unique, while the formal divisor  $\hat\sD$ is completely determined by $\Delta$.

Given any degree zero divisor $\sD$ on $C_V(\C_p) \simeq \PP_1(\C_p)$, there is a rational function $f_\sD$ on $C_V(\C_p)$ having $\sD$ as a divisor, which is unique up to a multiplicative constant. A  rational function $f$ is extended multiplicatively to any divisor $\sD = \sum_{x \in C_V(\C_p)} m_x \cdot (x)$ by setting
\[
f(\sD) \defeq \prod_{x \in C_V(\C_p)} f(x)^{m_x}.
\]
\begin{definition}
	The {\em Weil symbol} attached to two degree zero divisors $\sD_0$ and $\sD_1$ on $C_V(\C_p)$ with disjoint supports is the quantity
	\[
	[\sD_0;\sD_1] \defeq f_{\sD_0}(\sD_1) \in \C_p.
	\]	
\end{definition}
The Weil symbol generalises the familiar cross-ratio which one recovers when $\sD_0$ and $\sD_1$ are both differences of two points, and satisfies the  following familiar properties:
\begin{enumerate}
 \item It is bilinear: for all degree zero divisors $\sD_0$, $\sD_1$ and $\sD_2$
 \[
 [\sD_0; \sD_1+\sD_2] = [\sD_0;\sD_1]\times [\sD_0,\sD_2], \quad
 [\sD_0+\sD_1;\sD_2] = [\sD_0; \sD_2]\times [\sD_1;\sD_2], 
 \]
 \item It is $B_{\C_p}^\times$-equivariant: 
 \[
 [\gamma \sD_0; \gamma \sD_1] = [\sD_0;\sD_1] \quad \mbox{ for all } \gamma\in B_{\C_p}^\times.
 \]
 \item It is symmetric (Weil reciprocity):
 \[ 
 [\sD_0;\sD_1] = [\sD_1;\sD_0].
 \]
 \item Given any pair  $\sD_0$ and $\sD_1$ of degree zero divisors on $\sH_p$ for which the support of $\sD_0$ is disjoint from the $\Gamma$-orbit of the support of $ \sD_1$, the infinite product
 \[
 [\sD_0; \sD_1]_\Gamma \defeq \prod_{\gamma\in \Gamma} [\sD_0; \gamma \sD_1]
 \]
 converges absolutely in $\C_p^\times$ (see page 47 of \cite{GvdP}). 
\end{enumerate}
The quantity $[\sD_0; \sD_1]_\Gamma$ is called the
{\em modular Weil symbol} attached to the divisors $\sD_0$ and $\sD_1$ on $\sH_p$ and to the discrete $p$-arithmetic group $\Gamma$. It can be used to describe the Jacobian of $X$ as follows: let $L$ be a complete extension of $\Q_{p^2}$, $\sD \in \Div^0(\sH_p(L))$ a divisor of degree $0$ and choose $\eta \in \sH_p(L)$ such that $(\eta)$ and $\alpha \sD$ have disjoint support for all $\alpha \in \Gamma$. Then, define $\theta_\sD$ via
\[
\theta_\sD(z) = [(z) - (\eta) ; \sD]_\Gamma\quad \forall z \in \sH_p(L).
\]
Note that for $\gamma \in \Gamma$ one gets
\[
\frac{\theta_{\sD}(\gamma z)}{\theta_{\sD}(z)} = [(\gamma z) - (z); \sD]_\Gamma = [(\gamma \eta) - (\eta); \sD]_{\Gamma} \in L^\times,
\]
where in the last equality we used that the modular Weil symbol is invariant under the action of $\Gamma$ on any of the two divisors, and therefore the second expression is independent of $z$.
We then denote
\begin{equation}\label{eqn:def-jD(gamma)}
j_{\sD}(\gamma) = [(\gamma \eta) - (\eta) ; \sD ]_\Gamma.
\end{equation}
The function $j_{\sD}$ defines an element in $\Hom(\Gamma , L^\times) = \Hom(\Gamma_{\mathrm{ab}} , L^\times)$, where ${\Gamma}_{\mathrm{ab}}$ is the abelianization of $\Gamma$. In fact, $j_\sD$ factors through the maximal torsion-free quotient of $\Gamma_{\mathrm{ab}}$, that will be denoted by $\bar{\Gamma} = \Gamma_{\mathrm{ab}}/(\Gamma_{\mathrm{ab}})_{\mathrm{tors}}$.
We need to introduce one more ingredient, the so-called {\em $p$-adic period pairing}. Define 
\[
\langle \ , \ \rangle\colon \Gamma\times \Gamma \rightarrow \C_p^\times,
\]
by choosing arbitrary base points $\tau_1,\tau_2\in \sH_p$ that are not $\Gamma$-equivalent and
setting 
$$ \langle \gamma_1, \gamma_2\rangle \defeq [(\gamma_1\tau_1)-(\tau_1); (\gamma_2\tau_2)-(\tau_2)]_\Gamma.$$
In a similar way as above, it can be seen that this expression does not depend on the choice of $\tau_1$ and $\tau_2$, and is a homomorphism on each argument. Moreover, it descends to a pairing $ \langle \ , \ \rangle\colon \bar{\Gamma} \times \bar{\Gamma} \to \Q_p^\times$. The group $\bar{\Gamma}$ is finitely generated of rank equal to the genus $g$ of the Shimura curve $X$ and the pairing $\langle \ , \ \rangle$  gives  an embedding
\[ 
j\colon \bar{\Gamma} \intoo  \Hom(\bar{\Gamma}, \Q_p^\times) \simeq (\Q_p^\times)^g.
\]
Now, for a given $\Delta \in \Div^0(\Gamma \backslash \sH_p(L))$, choose $\sD\in \Div^0(\sH_p(L))$ such that $\pi_\ast \sD = \Delta$ and define
\[
\AJ\colon {\Div}^0(\Gamma\backslash \sH_p(L)) \too \Hom(\bar{\Gamma}, L^\times)/j(\bar{\Gamma}), \ \Delta \mapstoo [j_\sD].
\] 
It is a calculation to verify that the equivalence class of $j_\sD$ is independent of the choice of lift of $\Delta$, showing that the map $\AJ$ is well-defined. 
Remember that $J$ denotes the Jacobian of the curve $X$.
\begin{proposition}\label{prop: description p-adic points of Jacobian}
	The map $\AJ$ defined above is trivial on the group of principal divisors and, for every complete extension $L$ of $\Q_{p^2}$, it induces an identification
	\[
	J(L) \simeq \Hom(\bar{\Gamma}, L^\times)/j(\bar{\Gamma}).
	\]
	Moreover, if $L/\Q_{p^2}$ is a Galois extension, the identification is $\Gal(L/\Q_{p^2})$-equivariant.
\end{proposition}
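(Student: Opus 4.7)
The plan is to first verify that $\AJ$ is well defined, then show that it vanishes on principal divisors, and finally identify its target with $J(L)$ via the rigid analytic uniformization of Mumford curves.

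For well-definedness, independence of $\eta$ is built into the definition (the expression for $j_\sD(\gamma)$ in \eqref{eqn:def-jD(gamma)} is visibly independent of $z$). Two lifts of a fixed $\Delta\in \Div^0(\Gamma\backslash\sH_p(L))$ differ by a finite sum of divisors of the form $(\gamma x)-(x)$ with $\gamma\in\Gamma$, $x\in\sH_p(L)$. A direct calculation using bilinearity, Weil reciprocity, and symmetry of the $\Gamma$-modular Weil symbol yields
\[
[(\gamma'\eta)-(\eta);(\gamma x)-(x)]_\Gamma \;=\; [(\gamma x)-(x);(\gamma'\eta)-(\eta)]_\Gamma \;=\; \langle \gamma,\gamma'\rangle \;=\; j(\gamma)(\gamma'),
\]
so changing the lift modifies $j_\sD$ only by an element of $j(\bar\Gamma)$. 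That $j_\sD$ factors through $\bar\Gamma=\Gamma/\{\pm 1\}$, and hence through its abelianization, is clear since $-1$ acts trivially on $\sH_p$ and $L^\times$ is abelian.

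For triviality on principal divisors, suppose $\Delta=\mathrm{div}(f)$ for $f$ a rigid meromorphic function on $X$. The pullback $\tilde f\defeq f\circ\pi$ is a $\Gamma$-invariant rigid meromorphic function on $\sH_p$ whose divisor is the infinite formal divisor $\pi^\ast\Delta$. For any finite lift $\sD$ of $\Delta$, the theta function $\theta_\sD$ is rigid meromorphic on $\sH_p$ with divisor $\sum_{\gamma\in\Gamma}\gamma\sD=\pi^\ast\Delta$. Hence $\theta_\sD$ and $\tilde f$ differ by a nonzero scalar, so $\theta_\sD$ is itself $\Gamma$-invariant and $j_\sD\equiv 1$. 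Combined with the previous paragraph, this produces a well-defined map $\Pic^0(X)(L)\to \Hom(\bar\Gamma, L^\times)/j(\bar\Gamma)$.

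To identify the target with $J(L)$ I would invoke the Mumford--Manin--Drinfeld uniformization (see \cite{GvdP,FvdP}): since $X$ is a Mumford curve over $\Q_{p^2}$ with Schottky group $\bar\Gamma$, its Jacobian fits in a short exact sequence of rigid analytic groups
\[
1\too \bar\Gamma\xlongrightarrow{j}\Hom(\bar\Gamma,\mathbb{G}_m)\too J^{\mathrm{an}}\too 1,
\]
and the analytic Abel--Jacobi map sends a degree-zero divisor to the automorphy character of the theta function realising it. Matching this analytic Abel--Jacobi with our $\AJ$ amounts to recognising $\theta_\sD$ as (a scalar multiple of) Mumford's standard theta function attached to $\sD$, which follows from uniqueness of rigid meromorphic functions on $\sH_p$ with a given divisor up to constants. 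This simultaneously gives injectivity (kernel equal to principal divisors) and surjectivity, establishing the claimed isomorphism. Galois equivariance is transparent: the Weil symbol on $C_V$ is built from rational functions defined over $\Q$, so its values commute with the action of $\mathrm{Gal}(L/\Q_{p^2})$, and therefore so does the construction of $j_\sD$.

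The main obstacle is the last identification: verifying rigorously that our $\theta_\sD$ is the analytic theta function governing Mumford's uniformization, and that the absolute convergence of $[-;-]_\Gamma$ is strong enough to justify the various bilinearity and reciprocity manipulations performed inside the infinite product. Both facts are standard in the theory of Mumford curves but need to be carefully invoked.
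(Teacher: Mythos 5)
The paper's own ``proof'' of this proposition is simply a citation to VI.2 and VIII.4 of \cite{GvdP}, so your overall strategy---reducing everything to the Mumford--Manin--Drinfeld theory of Schottky uniformization---is consistent with what the paper does, and your well-definedness and Galois-equivariance paragraphs are essentially fine. However, your argument for triviality on principal divisors contains a genuine gap, and the same gap reappears in your final identification step. The flawed claim is: ``$\theta_\sD$ and $\tilde f$ have the same divisor on $\sH_p$, hence differ by a nonzero scalar.'' Rigid meromorphic functions on $\sH_p$ are \emph{not} determined by their divisors up to constants: the group of nowhere-vanishing rigid analytic functions on $\sH_p$ is large. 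For instance $z\mapsto (z-a)/(z-b)$ with $a,b\in\Q_p$ distinct is invertible and nonconstant on $\sH_p$ (its zero and pole lie on the removed boundary $\PP_1(\Q_p)$), and, more to the point, the function $u_\gamma(z)=[(z)-(\eta);(\gamma x)-(x)]_\Gamma$ is invertible, nonconstant, and has nontrivial automorphy character $j(\gamma)$.

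In fact your conclusion---that $j_\sD\equiv 1$ for \emph{every} finite lift $\sD$ of a principal divisor---contradicts your own well-definedness computation: if $\sD$ is a lift of $\Delta$, then so is $\sD+(\gamma x)-(x)$, and by your first paragraph the two automorphy characters differ by $j(\gamma)$, which is nontrivial because $j$ is injective (positive definiteness of $\ord_p\circ\langle\cdot,\cdot\rangle$). So the two characters cannot both be identically $1$; the correct statement is only that $j_\sD\in j(\bar{\Gamma})$, i.e.\ triviality in the quotient $\Hom(\bar{\Gamma},L^\times)/j(\bar{\Gamma})$. Proving this requires genuinely more than Weil reciprocity: one must control the invertible rigid analytic functions on $\sH_p$ whose automorphy factors are constant, e.g.\ via van der Put's description of $\cO(\sH_p)^\times$ modulo constants in terms of $\Z$-valued currents (harmonic cocycles) on the tree $\cT$, and show that any such character lies in the period lattice $j(\bar{\Gamma})$. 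This is precisely the core content of the results in \cite{GvdP} that the paper cites, not a two-line consequence of matching divisors. The same false uniqueness principle is invoked a second time when you ``recognise $\theta_\sD$ as a scalar multiple of Mumford's standard theta function'': that identification is true, but it must be obtained by comparing the defining infinite products (or by the same current-theoretic machinery), not from uniqueness of rigid meromorphic functions on $\sH_p$ with a prescribed divisor.
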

\begin{proof}
	See VI.2. and VIII.4 of \cite{GvdP}.
\end{proof}
In view of the previous proposition, $\AJ$ can be interpreted as a $p$-adic Abel--Jacobi map.
We also note that, by the positive definiteness of the pairing $\ord_p \circ \langle \cdot, \cdot \rangle$, the natural homomorphism from $\Hom(\bar{\Gamma}, \Z_{p^2}^\times)$ to $\Hom(\bar{\Gamma}, \Q_{p^2}^\times)/j({\Gamma})$ is an injection, whose image has finite index.
This gives the explicit description
\[
J(\Q_{p^2})_\Q \simeq H^1(\bar{\Gamma}, \Z_{p^2}^\times)_\Q.
\]

\subsection{Divisors of strong degree $0$}\label{subsec: div of strong degree 0} For any vertex $L \in \cT_0$, consider the affinoid $\cA_L \defeq \red^{-1}(L) \subset \sH_p$ and the wide open $\cW_L \subset \sH_p$ given as the preimage by $\red$ of the union of the vertex $L$ and all the (open) edges of $\cT$ that have $L$ as one of its endpoints.
\begin{definition}\label{def: divisor of strong degree 0}
	Let $\sD$ be a finite divisor on $\sH_p$. 
	\begin{enumerate}
		\item $\sD$ is of {\em strong degree $0$ in an even sense} if for every $L \in \cT_0$ even vertex (resp.~odd vertex), we have that $\sD \cap \cW_L$ (resp.~$\sD \cap \cA_L$) is of degree $0$.
		\item  $\sD$ is of {\em strong degree $0$ in an odd sense} if for every $L \in \cT_0$ odd vertex (resp.~even vertex), we have that $\sD \cap \cW_L$ (resp.~$\sD \cap \cA_L$) is of degree $0$.
	\end{enumerate}
	A divisor $\Delta \in \Div(\Gamma \backslash \sH_p)$
	is of {\em of strong degree zero} if the following equivalent conditions hold: 
	\begin{enumerate}
		\item There exists divisors $\sD_e, \sD_o \in {\Div}(\sH_p)$ of strong degree $0$ in an even and odd sense respectively such that $\pi_\ast(\sD_e) = \pi_\ast(\sD_o) = \Delta$.
		\item The formal divisor $\hat{\sD} = \pi^\ast \Delta$ satisfies that, for every $L \in \cT_0$, the divisors $\hat{\sD} \cap \cW_L \mbox{ and } \hat{\sD} \cap \cA_L$ have degree $0$.
	\end{enumerate}
\end{definition}

We denote by ${\Div}^{0}_s(\Gamma \backslash \sH_p)$ the group of divisors of strong degree $0$ on $\Gamma \backslash \sH_p$. We also denote  ${\Div}^{0}_{s,e}(\sH_p)$ (resp.~$\Div^{0}_{s,o}(\sH_p)$) the group of divisors of strong degree $0$ on $\sH_p$ in an even (resp.~odd) sense. The motivation for these notions is explained in the next lemma. 

\begin{lemma}\label{lemma: lift of AJ divisor strong degree 0}
Let $\Delta$ be an element of ${\Div}^{0}_s(\Gamma \backslash \sH_p)$.
The homomorphism $j_{\sD_e}\in\Hom(\bar{\Gamma}, \C_p^\times)$ does not depend on a choice of $\sD_e \in \Div^{0}_{s,e}(\sH_p)$ with $\pi_\ast(\sD_e) = \Delta$.
In particular, the morphism
	\[
	{\Div}^{0}_s(\Gamma \backslash \sH_p) \too \Hom(\bar{\Gamma}, \C_p^\times), \ \Delta \mapstoo j_{\sD_e},
	\]
	is a well-defined lift of the restriction of $\AJ$ to $\Div_s^0(\Gamma \backslash \sH_p)$.
	The same is true if one replaces $e$ by $o$ everywhere.
\end{lemma}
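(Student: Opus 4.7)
Let $\sD_e, \sD_e' \in \Div^0_{s,e}(\sH_p)$ both push forward to $\Delta$, and set $\sE := \sD_e - \sD_e' \in \ker(\pi_\ast) \cap \Div^0_{s,e}(\sH_p)$. By the bilinearity of the modular Weil symbol in its second argument, it suffices to show that $j_\sE = 1$ in $\Hom(\bar\Gamma,\C_p^\times)$ for every such $\sE$. Given this, the map is a lift of $\AJ|_{\Div^0_s(\Gamma\backslash\sH_p)}$ by the very definition of $\AJ$.

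The first step is to translate the vanishing of $j_\sE$ into a statement inside $\bar\Gamma$ itself. Since $\bar\Gamma$ acts freely on $\sH_p$, every element of $\ker(\pi_\ast)$ admits a (non-unique) decomposition $\sE = \sum_i m_i((\alpha_i z_i) - (z_i))$ with $\alpha_i \in \bar\Gamma$ uniquely determined by each pair $(z_i,\alpha_i z_i)$. A short check shows that the class $h(\sE) := \sum_i m_i\alpha_i \in \bar\Gamma$ is independent of the chosen decomposition, so $h\colon \ker(\pi_\ast) \to \bar\Gamma$ is a well-defined group homomorphism. Applying the definition \eqref{eqn:def-jD(gamma)} directly to the basic case $\sE = (\alpha z)-(z)$ yields $j_\sE = \langle\cdot,\alpha\rangle$, and then bilinearity gives $j_\sE = \langle\cdot,h(\sE)\rangle$ in general. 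Since the pairing $\langle\cdot,\cdot\rangle$ embeds $\bar\Gamma$ into $\Hom(\bar\Gamma,\Q_p^\times)$, the problem reduces to showing that $h(\sE) = 0$ in $\bar\Gamma$.

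To prove this, I plan to use the isomorphism $\bar\Gamma \simeq H_1(\bar\Gamma\backslash\cT,\Z)$ coming from the $p$-adic uniformization (the quotient graph has first Betti number equal to the rank of $\bar\Gamma$). Under this identification, $h(\sE)$ is represented by the 1-chain $c(\sE) \in C_1(\bar\Gamma\backslash\cT,\Z)$ obtained by projecting, for each summand of any decomposition, the unique geodesic in the tree $\cT$ between $\red(z_i)$ and $\red(\alpha_i z_i)$; because $\cT$ is a tree the resulting chain is independent of the decomposition. Write $\nu_L(\sE)$ and $\nu_e(\sE)$ for the total weight of $\sE$ on $\red^{-1}(L)$ and on $\red^{-1}(e^\circ)$ respectively. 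The strong-degree-zero condition in the even sense translates into the linear relations $\nu_L(\sE) = 0$ at every odd vertex $L$, and $\nu_L(\sE) + \sum_{e\ni L}\nu_e(\sE) = 0$ at every even vertex $L$. A direct edge-by-edge count of how the geodesics in the summands cross each edge of $\bar\Gamma\backslash\cT$, using that the vertex fluxes balance the edge contributions at each even vertex and vanish at each odd vertex, shows that $c(\sE) = 0$ in $C_1(\bar\Gamma\backslash\cT,\Z)$. Since the quotient graph has no 2-cells, this forces $h(\sE) = 0$. The proof for $\sD_o\in\Div^0_{s,o}(\sH_p)$ is identical after swapping the roles of even and odd vertices.

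The step I expect to be the main obstacle is the final combinatorial one: fixing sign conventions for the orientations of the edges of $\cT$ and of the geodesic paths, and then verifying that the vertex balances $\nu_L + \sum_{e\ni L}\nu_e = 0$ at even vertices and $\nu_L = 0$ at odd vertices really do translate into the pointwise vanishing of $c(\sE)$ at every edge of $\bar\Gamma\backslash\cT$. This is essentially a bookkeeping exercise once the conventions are pinned down, but it is where the distinction between the even and odd variants of strong degree zero is genuinely used.
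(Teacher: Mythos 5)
Your proposal is correct, and it takes a genuinely different route from the paper's own proof. The paper argues by rigidity of decompositions: since every edge of $\cT$ meets exactly one even vertex, the regions $\cW_L$ ($L$ even) and $\cA_{L'}$ ($L'$ odd) tile $\sH_p$ into pairwise disjoint pieces permuted freely by $\Gamma$; any lift in ${\Div}^0_{s,e}(\sH_p)$ splits into degree-zero divisors supported on these pieces, the formal divisor $\pi^\ast\Delta$ determines those pieces up to $\Gamma$-translation, and the invariance $j_{\gamma\sD_i}=j_{\sD_i}$ then forces the two lifts to give the same homomorphism. You instead linearize (pass to $\mathscr{E}:=\sD_e-\sD_e'\in\ker(\pi_\ast)\cap{\Div}^0_{s,e}(\sH_p)$), identify $j_{\mathscr{E}}=\langle\,\cdot\,,h(\mathscr{E})\rangle$ with $h(\mathscr{E})\in\bar\Gamma\simeq H_1(\bar\Gamma\backslash\cT,\Z)$, and kill the representing cycle combinatorially. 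I checked that the step you flag as the main obstacle does close, and in a cleaner form than a crossing count: work in the barycentric subdivision of $\cT$ (needed anyway, since points of $\mathscr{E}$ may reduce to open edges); the unique finite $1$-chain $C$ with $\partial C=\red_\ast\mathscr{E}$ has, on each half-edge, coefficient equal to the total weight of $\red_\ast\mathscr{E}$ on the subtree behind that half-edge. For the half-edge pointing into the odd side of an edge $f$, the edges contained in that subtree are exactly those whose even endpoint lies in it (the even endpoint of $f$ itself lies outside), so the even-vertex relations $\nu_L+\sum_{g\ni L}\nu_g=0$ cancel the edge weights, the odd-vertex relations $\nu_{L'}=0$ kill the rest, and the coefficient is $0$; hence $C=\sum_f\nu_f\cdot(\text{even half of }f)$, and $\pi_\ast C=0$ because $\pi_\ast\mathscr{E}=0$ forces $\sum_{f\in\Gamma f_0}\nu_f=0$ on every edge orbit. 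What the paper's argument buys is brevity: no homology, no chain bookkeeping. What yours buys is a sharper picture: for an arbitrary lift the ambiguity in $j_\sD$ is exactly the class $h(\mathscr{E})$ in the homology of the quotient graph (which is precisely why $\AJ$ is only defined modulo $j(\bar\Gamma)$), and the even/odd strong-degree-zero conditions are exactly the flux conditions annihilating that class, making transparent where the parity convention enters. One small logical remark: you do not actually need the injectivity of $\bar\Gamma\hookrightarrow\Hom(\bar\Gamma,\Q_p^\times)$ for your reduction, since $h(\mathscr{E})=0\Rightarrow j_{\mathscr{E}}=1$ is all that is required; injectivity would only matter for the converse implication.
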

\begin{proof}
Let $\sD$, $\sD' \in \Div^{0}_{s,e}(\sH_p)$ be such that $\pi_\ast \sD = \pi_\ast \sD' = \Delta$.
By the strong degree $0$ assumption there exist vertices $L_1, \dots, L_r \in \cT_0$ and a decomposition
	\[
	\sD = \sD_1 + \dots + \sD_r
	\]
	such that for $1\leq i\leq r$ the divisor $\sD_i$ is of degree $0$ and supported on
	\begin{enumerate}
		\item $\cW_{L_i}$ if $L_i$ is an even vertex, or
		\item $\cA_{L_i}$ if $L_i$ is an odd vertex.
	\end{enumerate}
	Since $j_{\sD_j} = j_{\gamma \sD_j}$, for every $\gamma \in \Gamma$, we can suppose that the vertices $L_1, \dots, L_r$ are not $\Gamma$-equivalent.
	Proceeding similarly for $\sD'$, there exist lattices $L_1', \dots , L_{r'}' \in \cT_0$ and degree $0$ divisors $\sD_1', \dots, \sD_{r'}' \in \Div^0(\sH_p)$ satisfying the same conditions as above.
	We have 
	\[
	\hat{\sD} = \sum_{i = 1}^r\sum_{\alpha \in \Gamma} \alpha {\sD}_i = \sum_{i = 1}^{r'}\sum_{\alpha \in \Gamma } \alpha {\sD}_i'.
	\]
	For $\alpha \in \Gamma$, the divisor $\alpha {\sD}_i$ has support in $\cW_{\alpha L_i}$ if $L_i$ is even and has support in $\cA_{\alpha L_i}$ if $L_i$ is odd. Note that these supports are disjoint when $i$ varies from $1$ to $r$ and $\alpha$ varies over $\Gamma$, as $\Gamma$ does not stabilize any vertex because $\bar{\Gamma}$ is torsion-free. Moreover, the same holds for the divisors $\alpha{\sD}_i'$. Thus, we conclude that $r = r'$ and there exist $\alpha_1, \dots, \alpha_r \in \Gamma$ such that 
	\[
	{\sD}_i = \alpha_i{\sD}_i'
	\]
	for every $i$ (after rearranging terms, if needed).
	We therefore have that $j_{\sD_i} = j_{\sD_i'}$ for all $i$ and the equality $j_{\sD} =j_{\sD'}$ follows.	
\end{proof}

\begin{remark}
	If $\Delta \in \Div_s^0(\Gamma \backslash \sH_p)$ is a divisor supported on preimages of vertices by the reduction map, both lifts $\sD_e$ and $\sD_o$ are divisors of strong degree $0$ in an even sense and in an odd sense simultaneously. We will sometimes drop the subindices $e$ and $o$ in this case.
\end{remark}

\subsection{Reduction of the main theorem to convenient Schwartz--Bruhat functions}\label{subsec: reduction of theorem to convenient Schwartz--Bruhat functions} 
Recall the action of $\T^N$ on $\hat{R}^\times$-invariant Schwartz--Bruhat functions introduced in Section \ref{section: Cerednik--Drinfeld theorem}.
We can similarly define an action of $\T^N$ on the space $\mathrm{Funct}(\Gamma \backslash \cT_0, \Z)$ of $\Gamma$-invariant integral functions on $\cT_0$.
\begin{definition}
	A Schwartz--Bruhat function $\Phi$ on $V _{\A^{p,\infty}}$ is {\em convenient} if it is special, $\Phi(0) = 0$, and for every $D \in \cD_S$ the divisor $\Delta_\Phi(D)$ is of strong degree $0$.
\end{definition}

\begin{lemma}\label{lemma: div strong degree 0 and Hecke operators}
	Let $\Phi$ be a special Schwartz--Bruhat function and let $T \in \T^N$ be a Hecke operator that annihilates the space $\mathrm{Funct}(\Gamma \backslash \cT_0, \Z)$. Then, the Schwartz--Bruhat function $T(\Phi)$ is convenient. 
\end{lemma}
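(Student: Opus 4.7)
The plan is to verify the three defining conditions of being convenient: that $T(\Phi)$ is special, that $T(\Phi)(0)=0$, and that $\Delta_{T(\Phi)}(D)$ has strong degree zero for every $D\in\cD_S$. The first is already known, since the discussion preceding Lemma~\ref{lemma: Hecke action divisors and Schwartz--Bruhat functions} records that the Hecke action preserves special Schwartz--Bruhat functions. For the second, recall that $T_\ell\Phi(v)=\sum_{j}\Phi(\beta_j v\beta_j^{-1})$ with $\beta_j=\alpha^{-1}\delta_j$, so at $v=0$ one simply obtains $(\deg T_\ell)\cdot\Phi(0)$; the same scalar is the value of $T_\ell$ applied to the constant function $1$ on $\cT_0$. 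Since $T$ annihilates all of $\mathrm{Funct}(\Gamma\backslash\cT_0,\Z)$, it annihilates $1$, so $T(\Phi)(0)=0$.

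The heart of the argument is the strong-degree-zero condition. The strategy is to encode it by a single $\Gamma$-invariant function on the tree: for each $D\in\cD_S$ and each $\hat R^\times$-invariant $\Phi$, define $\phi_\Phi^D\colon\cT_0\too\Z$ by
\[
\phi_\Phi^D(L) \defeq \sum_{\substack{v\in V,\ v\in L\\ Q(v)=D}} \Phi(v),
\]
which is $\Gamma$-invariant because $\Gamma\subset\hat R^\times$ acts on $V$ by conjugation compatibly with its action on $\cT_0$ while preserving $\Phi$. Unwinding Lemma~\ref{lem: howmanylattices}: when $\ord_p(D)=0$, the support of each $\Delta(v)$ with $Q(v)=D$ sits inside the affinoid $\cA_L$ attached to the unique unimodular lattice containing $v$, giving $\deg\bigl(\pi^\ast\Delta_\Phi(D)\cap\cA_L\bigr)=2\,\phi_\Phi^D(L)$; when $\ord_p(D)=1$ the support sits in a single residue annulus, so $\deg\bigl(\pi^\ast\Delta_\Phi(D)\cap\cA_L\bigr)=0$ while $\deg\bigl(\pi^\ast\Delta_\Phi(D)\cap\cW_L\bigr)=2\,\phi_\Phi^D(L)$ after summing over the neighbors of $L$. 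Thus $\Delta_\Phi(D)$ has strong degree zero if and only if $\phi_\Phi^D$ vanishes identically on $\cT_0$.

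The last step is the Hecke-equivariance $\phi^D_{T(\Phi)}=T(\phi_\Phi^D)$, where $T$ acts on $\mathrm{Funct}(\Gamma\backslash\cT_0,\Z)$ in the standard way via the same adelic double cosets. For $T=T_\ell$, the change of variables $w=\beta_j v\beta_j^{-1}$ preserves the reduced norm and sends $\{v\in L\}$ bijectively onto $\{w\in\beta_j L\}$, yielding $\phi^D_{T_\ell(\Phi)}(L)=\sum_j\phi_\Phi^D(\beta_j L)=(T_\ell\phi_\Phi^D)(L)$; the statement for general $T\in\T^N$ follows by linearity. Since $T$ annihilates $\mathrm{Funct}(\Gamma\backslash\cT_0,\Z)$ and each $\phi_\Phi^D$ lies in this space, we obtain $\phi^D_{T(\Phi)}\equiv 0$, which is precisely the strong-degree-zero condition for $\Delta_{T(\Phi)}(D)$.

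The main obstacle is setting up the dictionary between Schwartz functions on $V_{\A^{p,\infty}}$ and $\Gamma$-invariant integral functions on $\cT_0$, and verifying that the three Hecke actions involved—on Schwartz functions, on divisors of $\Gamma\backslash\sH_p$, and on functions on $\cT_0$—are all images of the same double-coset action. Once this compatibility is established, the argument reduces to a careful bookkeeping of residue discs and annuli guided by the case analysis in Lemma~\ref{lem: howmanylattices}.
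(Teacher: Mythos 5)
Your proof is correct, and it rests on the same underlying mechanism as the paper's: reduce the strong-degree-zero condition to the vanishing of a $\Gamma$-invariant, $\Z$-valued function on $\cT_0$, and then invoke the hypothesis that $T$ kills all such functions. The difference is in how the Hecke equivariance is established. The paper introduces the map $\deg_{\cT_0}\colon \Div(\Gamma\backslash\sH_p)\to\mathrm{Funct}(\Gamma\backslash\cT_0,\Z)$ sending a point to the characteristic function of the vertex it reduces to (or the sum of the two endpoints of the edge), notes that it is $B^\times$-equivariant hence $\T^N$-equivariant, and then composes with the already-proved divisor-level compatibility of Lemma~\ref{lemma: Hecke action divisors and Schwartz--Bruhat functions}, getting $\deg_{\cT_0}(\Delta_{T(\Phi)}(D))=T\bigl(\deg_{\cT_0}(\Delta_\Phi(D))\bigr)=0$. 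You instead build the vertex function directly from the Schwartz--Bruhat function, $\phi^D_\Phi(L)=\sum_{v\in L,\ Q(v)=D}\Phi(v)$, and prove $\phi^D_{T(\Phi)}=T(\phi^D_\Phi)$ by the change of variables $w=\beta_j v\beta_j^{-1}$, thereby bypassing Lemma~\ref{lemma: Hecke action divisors and Schwartz--Bruhat functions} entirely. The two are reconciled by the identity $\deg_{\cT_0}(\Delta_\Phi(D))=2\,\phi^D_\Phi$, which follows from Lemma~\ref{lem: howmanylattices} exactly as in your residue-disc bookkeeping (note also that $\phi^D_\Phi(L)$ is the $D$-th coefficient of the theta series attached to $L$ in Section~\ref{section: degrees of Heegner divisors}, so your equivariance is a theta-coefficient identity). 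What your route buys is self-containedness at the level of lattice counts; what the paper's buys is brevity, since it reuses a lemma already in hand. Your handling of the other two conditions --- specialness of $T(\Phi)$ from the discussion preceding Lemma~\ref{lemma: Hecke action divisors and Schwartz--Bruhat functions}, and $T(\Phi)(0)=0$ because $T(\Phi)(0)$ is $\Phi(0)$ times the eigenvalue of $T$ on constant functions --- coincides with the paper's.
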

\begin{proof}
For $L \in \cT_0$, denote by $\delta_L$ the characteristic function of $L$.
	Define the homomorphism
	$\deg_{\cT_0}\colon \Div(\sH_p) \to \mathrm{Funct}(\cT_0, \Z)$ by
	\[
	\deg_{\cT_0}((P)) = \begin{cases}
	\delta_L, \mbox{ if } \red(P) = L \in \cT_0, \\
	\delta_L + \delta_{L'} \mbox{ if } \red(P) = (L, L') \in \cT_1.
	\end{cases}
	\]
	This morphism is $B^\times$-equivariant, hence induces a $\T^N$-equivariant morphism
	\[
	\deg_{\cT_0}\colon \Div(\Gamma \backslash \sH_p) \too \mathrm{Funct}(\Gamma \backslash \cT_0, \Z).
	\]
	We proceed to verify that $\Delta_{T(\Phi)}(D)$ is of strong degree $0$ for a fixed $D \in \cD_S$. From the Hecke equivariance of $\deg_{\cT_0}$, we have
	\[
	\deg_{\cT_0}(\Delta_{T(\Phi)}(D)) = \deg_{\cT_0}(T(\Delta_\Phi(D))) = T(\deg_{\cT_0}(\Delta_\Phi(D))) = 0,
	\]
	where we used Lemma \ref{lemma: Hecke action divisors and Schwartz--Bruhat functions} in the first equality. Since $\Delta_{T(\Phi)}(D)$ is supported on preimages of vertices (resp.~edges) if $\ord_p(D)$ is even (resp.~odd), the fact that $\deg_{\cT_0}(\Delta_{T(\Phi)}(D)) = 0$ implies that $\Delta_{T(\Phi)}(D)$ is of strong degree $0$. Finally, from the fact that $T$ sends the constant functions on $\mathrm{Funct}(\Gamma \backslash \mathcal T_0, \Z)$ to 0, it follows that $(T(\Phi))(0) = 0$.
\end{proof}

Let $G_\Phi(q) \in \Pic(\Gamma \backslash \sH_p) [[q]]$ be the generating series introduced in Theorem \ref{thm:main p-adic}.
We now use the Jacquet--Langlands correspondence to justify that to prove Theorem \ref{thm:main p-adic} it is enough to prove it for the particular case where $\Phi$ is convenient.

\begin{proposition} \label{prop:reduce-to-convenient}
	The following statements are equivalent:
	\begin{enumerate}
		\item The generating series $G_{\Phi}(q)$ is a modular form of weight $3/2$ and level $\Gamma_0(4N)$ for every special Schwartz--Bruhat function $\Phi$.
		\item The generating series $G_{\Phi}(q)$ is a cusp form of weight $3/2$ and level $\Gamma_0(4N)$ for every convenient Schwartz--Bruhat function $\Phi$.
	\end{enumerate}
\end{proposition}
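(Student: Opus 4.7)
My plan is to exploit the Hecke-equivariant splitting of $\Pic(\Gamma\backslash \sH_p)_\Q$ as a rank-one ``Eisenstein'' part (spanned by the canonical bundle class and detected by the degree map) and a complementary ``cuspidal'' part isomorphic to $J(\Q)_\Q$, combined with Lemma \ref{lemma: div strong degree 0 and Hecke operators} for producing convenient Schwartz--Bruhat functions from arbitrary special ones via the Hecke action.

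For the implication $(1) \Rightarrow (2)$, suppose $\Phi$ is convenient. Since $\Phi(0) = 0$ and every $[\Delta_\Phi(D)]$ has degree zero (strong degree $0$ being stronger than degree $0$), the series $G_\Phi$ takes values in $J(\Q)_\Q$. Assuming modularity from (1), I plan to upgrade to cuspidality via Jacquet--Langlands: the Hecke algebra $\T^N$ acts on $J(\Q)_\Q$ through a quotient whose systems of eigenvalues coincide with those of weight-$2$ cusp forms on $\Gamma_0(N)$ new at the primes in $S - \{\infty\}$, and these are disjoint from the Hecke eigenvalues that can appear in the Eisenstein part of weight-$3/2$ forms (via the Shimura--Shintani lift). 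Hence any Eisenstein component of $G_\Phi$ must vanish.

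For the implication $(2) \Rightarrow (1)$, let $\Phi$ be special. By Proposition \ref{prop: degGPhi is modular}, $\deg(G_\Phi)$ is modular. Let $A$ be the image of $\T^N$ in $\mathrm{End}_\Q(\Pic(\Gamma\backslash\sH_p)_\Q)$, a semisimple $\Q$-algebra. It decomposes as $A = A_{\mathrm{Eis}} \times A_{\mathrm{cusp}}$, where $A_{\mathrm{Eis}}$ is the rank-one factor acting on $\Q \cdot [\cL^\vee]$ with Hecke eigenvalues $\ell+1$, and $A_{\mathrm{cusp}}$ acts on $J(\Q)_\Q$. Let $e$ be the cuspidal idempotent in $A$. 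Via Jacquet--Langlands, the Hecke module $\mathrm{Funct}(\Gamma\backslash \cT_0, \Q)$ shares no eigensystems with $A_{\mathrm{cusp}}$ beyond those already occurring in $A_{\mathrm{Eis}}$; therefore one may choose a lift $T \in \T^N$ of $e$ that annihilates $\mathrm{Funct}(\Gamma\backslash \cT_0, \Z)$. By Lemma \ref{lemma: div strong degree 0 and Hecke operators}, $T(\Phi)$ is convenient, so by (2) combined with Lemma \ref{lemma: Hecke action divisors and Schwartz--Bruhat functions} the series $T(G_\Phi) = G_{T(\Phi)}$ is a cusp form. The complementary projection $(1-T)(G_\Phi)$ takes values in $\Q \cdot [\cL^\vee]$ and is thus determined by $\deg(G_\Phi)$, which is modular. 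Summing the two pieces yields modularity of $G_\Phi$.

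The main obstacle will be the Jacquet--Langlands comparison of Hecke eigensystems on $\mathrm{Funct}(\Gamma \backslash \cT_0, \Q)$ versus $J(\Q)_\Q$, which is what licenses the choice of the Hecke operator $T$ above with the dual property of annihilating tree-functions while projecting onto the cuspidal part of $\Pic$. A related subtlety is the Shimura-correspondence comparison in $(1) \Rightarrow (2)$ between the eigensystems occurring in $J(\Q)_\Q$ and those appearing in the Eisenstein subspace of weight-$3/2$ forms of level $\Gamma_0(4N)$. Both obstacles are ultimately representation-theoretic and should yield to a careful setup of the relevant automorphic Hecke modules; if one prefers to avoid them, an alternative is to work coefficient-by-coefficient via morphisms $\varphi\colon \Pic(\Gamma\backslash \sH_p)_\Q \to \C$ and decompose each such functional along the Eisenstein/cuspidal dichotomy.
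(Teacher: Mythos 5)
Your proof of the substantive direction, $(2)\Rightarrow(1)$, is essentially the paper's own argument: both rest on (i) Proposition \ref{prop: degGPhi is modular} for the degree part, (ii) Jacquet--Langlands disjointness of the eigensystems occurring in $\mathrm{Funct}(\Gamma\backslash\cT_0,\Q)$ (Eisenstein and $p$-old, level $N/p$) from those occurring in $J(\C_p)_\Q$ ($p$-new of level $N$) to produce $T\in\T^N$ killing tree functions while acting well on the Jacobian, and (iii) Lemmas \ref{lemma: Hecke action divisors and Schwartz--Bruhat functions} and \ref{lemma: div strong degree 0 and Hecke operators} to conclude that $T(\Phi)$ is convenient and $G_{T(\Phi)}=T(G_\Phi)$. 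The packaging differs only mildly: the paper first forces the series into $J(\C_p)_\Q[[q]]$ by replacing $\Phi$ with $(T_\ell-\ell-1)\Phi$ and then chooses $T$ \emph{bijective} on $J(\C_p)_\Q$ and inverts it, whereas you take $T$ to be a lift of the cuspidal idempotent $e$ and split $G_\Phi=T(G_\Phi)+(1-T)(G_\Phi)$. Two caveats on your variant. First, a lift of $e$ exists in $\T^N\otimes\Q$ but not necessarily in $\T^N$; this is harmless here since everything is tensored with $\Q$, but note the paper's bijectivity condition is scaling-invariant and sidesteps the issue. Second, your identification of the Eisenstein component with $\Q\cdot[\cL^\vee]$ amounts to the claim $T_\ell[\cL^\vee]=(\ell+1)[\cL^\vee]$ in $\Pic_\Q$ (not merely modulo $J_\Q$); this is true because $\cL$ descends from a $B^\times$-equivariant bundle on $\sH_p$, so the two pullbacks of $\cL$ to the Hecke correspondence agree, but it deserves a sentence --- the paper's reduction step implicitly needs the very same fact in order to match constant terms in $(T_\ell-\ell-1)(G_\Phi)=G_{(T_\ell-\ell-1)\Phi}$.

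The direction $(1)\Rightarrow(2)$ is where your argument has a genuine gap, although the paper itself offers nothing there (it says ``clearly'' and never uses this direction). Your deduction --- coefficients generate a Hecke module with only cuspidal $p$-new eigensystems, Eisenstein weight-$3/2$ forms have Eisenstein eigensystems, hence the Eisenstein component of $G_\Phi$ vanishes --- tacitly assumes that the Hecke action on the coefficients (the operators $T_\ell$ on $J(\C_p)_\Q$) is intertwined with the Hecke action on $q$-expansions (the operators $T_{\ell^2}$ of weight $3/2$) under the Shimura correspondence, i.e., that $T_{\ell^2}\bigl(\varphi(G_\Phi)\bigr)=\varphi\bigl(T_\ell G_\Phi\bigr)$ for functionals $\varphi$. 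This Hecke equivariance of the generating series is not a formal consequence of modularity, is not available at this point of the paper, and is in fact part of what the Gross--Kohnen--Zagier package ultimately delivers (compare the discussion in Section \ref{subsec: Hecke eigenvalues and Shimura lift}). Without that bridge, knowing the eigensystems on the coefficient side imposes no constraint on the Eisenstein components of the forms $\varphi(G_\Phi)$, so as written the step ``hence any Eisenstein component must vanish'' does not follow.
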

\begin{proof}
	Clearly (1) implies (2). We justify the reverse implication. By Jacquet--Langlands, we have:
	\begin{itemize}
		\item The action of $\T^N$ on $\mathrm{Funt}(\Gamma \backslash \cT_0, \Z)$ factors through the action of the Hecke algebra (away from $N$) on $M_2(\Gamma_0(N/p),\Q)$. 
		\item The action of $\T^N$ on $J(\C_p)_\Q$ factors through the action of the Hecke algebra (away from $S$) on the space of forms in $S_2(\Gamma_0(N), \Q)$ that are new at $p$.
	\end{itemize}
	Let $\ell$ be a prime such that $\ell \not \in S$ and denote by $T_\ell \in T^S$ the corresponding Hecke operator. From the second point, we deduce that we have an isomorphism  
	\[
	\Pic(X)(\C_p)_\Q \xlongrightarrow{ \sim } J(\C_p)_\Q \oplus \Q, \ [\Delta] \mapstoo \left( (T_\ell - \ell - 1)\Delta, \deg(\Delta)\right).
	\]
	Let $\Phi$ be a special Schwartz--Bruhat function. Since we proved that $\deg(G_\Phi)$ is a modular form in Proposition \ref{prop: degGPhi is modular}, after replacing $\Phi$ by $(T_\ell - \ell - 1)(\Phi)$ (and by Lemma \ref{lemma: Hecke action divisors and Schwartz--Bruhat functions}) we may suppose that $G_\Phi(q) \in J(\C_p)_\Q [[q]]$. 	
	Now, choose $T \in \T^N$ satisfying 
	\begin{itemize}
		\item $T$ annihilates $\mathrm{Funct}(\Gamma \backslash \cT_0, \Z)$ and 
		\item $T\colon J(\C_p)_\Q  \to J(\C_p)_\Q$ is a bijection.
	\end{itemize}
	By the first property and Lemma \ref{lemma: div strong degree 0 and Hecke operators}, $T(\Phi)$ is convenient and therefore $G_{T(\Phi)}(q) = T(G_\Phi(q))$ is a modular form by hypothesis. Here $T(G_\Phi(q))$ denotes the $q$-expansion obtained by applying $T$ to each of the coefficients of $G_\Phi(q)$.
	The fact that $G_\Phi(q) \in J(\C_p)_\Q [[q]]$ is a modular form follows from the bijectivity of $T$ on the Jacobian.
\end{proof}

Let $\Phi$ be a convenient Schwartz--Bruhat function. For every $D \in \cD_S$, fix $\sD_{\Phi}(D)_e \in \Div^{0}_{s,e}(\sH_p)$ and $\sD_{\Phi}(D)_o \in \Div^{0}_{s,o}(\sH_p)$ such that $\pi_\ast \sD_{\Phi}(D)_e = \pi_\ast \sD_{\Phi}(D)_o  = \Delta_\Phi(D)$. Note that such lifts can be chosen such that they are invariant under the action of $\Aut(\C_p/\Q_p)$.
It follows from there that the homomorphisms $j_{\sD_\Phi(D)_e}$ and $j_{\sD_\Phi(D)_o}$ take values in $\Q_p^\times$.
Consider the generating series 
\[
G_\Phi^+(q) = \sum_{D \in \cD_S} j_{\sD_\Phi(D)_e} \cdot j_{\sD_\Phi(D)_o} q^D \in \Hom(\bar{\Gamma}, {\Q}_p^\times)[[q]].
\]
Since $\Phi$ is special, we have that $a_D(G_\Phi^+(q)) = a_{Dp^{2n}}(G_\Phi^+(q))$ for all $n\geq 0$.
Modularity of $G_\Phi^+(q)$ clearly implies the modularity of $G_\Phi(q)$.
Thus, we will only consider the former in the remainder of this article.
 
\section{Values of $p$-adic theta functions}\label{section: values of p adic theta functions}

The goal of this section is to give an explicit expression for the quantity $j_\sD(\gamma)$ when $\gamma \in \Gamma$ is hyperbolic at $p$ and $\sD$ is a divisor on $\Gamma\backslash\sH_p$ of strong degree $0$. The formulas we will present have a similar flavor to the ones for toric values of lifting obstructions of rigid meromorphic cocycles given in \cite[Section 5.3]{DV--borcherds}. There, the orthogonal group of signature $(3,0)$ is replaced by an orthogonal group of signature $(1,2)$.

Fix an element $\gamma \in \Gamma$ that is hyperbolic at $p$.
It has two distinct fixed points
\[
\xi^+, \xi^-  \in  C_V(\Q_p)
\]
on the boundary of $\sH_p$.
 We order them in such a way that $\xi^+$ and $\xi^-$ are the attractive and repulsive fixed points of $\gamma$, i.e., 
\[
\lim_{M\rightarrow +\infty} \gamma^M \tau = \xi^+, \quad
\lim_{M\rightarrow -\infty} \gamma^M \tau = \xi^-,
\]
for all $\tau\in \sH_p$.

 \begin{lemma}\label{lemma: c_D in terms of a product over cosets of Gamma by gamma}
 For every $\sD \in \Div^0(\sH_p)$ the following equality holds:
 \[
 j_\sD(\gamma) = \prod\limits_{\alpha \in \gamma^{\mathbb{Z}}\backslash\Gamma}\left[ (\xi^+)-(\xi^-); \alpha \sD\right].
 \]
 \end{lemma}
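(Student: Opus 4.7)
The plan is to start from the definition
\[
j_\sD(\gamma) = [(\gamma\eta)-(\eta);\sD]_\Gamma = \prod_{\beta\in\Gamma}[(\gamma\eta)-(\eta);\beta\sD],
\]
and to reorganize the product according to the left coset decomposition $\Gamma = \bigsqcup_{\alpha\in\gamma^\Z\backslash\Gamma}\gamma^\Z\alpha$. Since the infinite product defining the modular Weil symbol converges absolutely in $\C_p^\times$, this rearrangement is legitimate and yields
\[
j_\sD(\gamma) = \prod_{\alpha\in\gamma^\Z\backslash\Gamma}\ \prod_{n\in\Z}[(\gamma\eta)-(\eta);\gamma^n\alpha\sD].
\]

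Next, I would fix one coset representative $\alpha$ and analyze the inner product over $\gamma^\Z$ using the $B_{\C_p}^\times$-equivariance of the Weil symbol. Applying $\gamma^{-n}$ to both arguments rewrites each factor as
\[
[(\gamma\eta)-(\eta);\gamma^n\alpha\sD] = [(\gamma^{1-n}\eta)-(\gamma^{-n}\eta);\alpha\sD].
\]
Setting $m=-n$ and using bilinearity, the partial product over $-M\le n\le M$ telescopes to
\[
[(\gamma^{M+1}\eta)-(\gamma^{-M}\eta);\alpha\sD],
\]
so the whole inner product equals $\lim_{M\to\infty}[(\gamma^{M+1}\eta)-(\gamma^{-M}\eta);\alpha\sD]$.

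Finally, I would pass to the limit. Since $\gamma$ is hyperbolic at $p$ with attractive and repulsive fixed points $\xi^+,\xi^-$, one has $\gamma^{M+1}\eta\to\xi^+$ and $\gamma^{-M}\eta\to\xi^-$ in $\sH_p\cup\{\xi^+,\xi^-\}$. A rational function $f_{\alpha\sD}$ on $C_V(\C_p)$ with divisor $\alpha\sD$ is continuous on a neighborhood of $\xi^\pm$ (disjoint from the support of $\alpha\sD$), so
\[
\lim_{M\to\infty}[(\gamma^{M+1}\eta)-(\gamma^{-M}\eta);\alpha\sD] = \frac{f_{\alpha\sD}(\xi^+)}{f_{\alpha\sD}(\xi^-)} = [(\xi^+)-(\xi^-);\alpha\sD].
\]
Taking the product over $\alpha\in\gamma^\Z\backslash\Gamma$ gives the claimed identity.

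The main subtlety is the justification of the rearrangement and the interchange of limit with infinite product; this rests on the absolute convergence of the modular Weil symbol (property (4) in the list preceding the statement) and on the fact that for all but finitely many cosets $\alpha$, the divisor $\alpha\sD$ is supported far away from $\xi^\pm$, so the individual factors tend to $1$ uniformly in the relevant $p$-adic sense.
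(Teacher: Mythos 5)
Your proposal is correct and follows essentially the same argument as the paper: both rearrange the absolutely convergent product defining $j_\sD(\gamma)$ into cosets of $\gamma^\Z$ in $\Gamma$, telescope the inner product over each coset using bilinearity, and pass to the limit at the attractive and repulsive fixed points $\xi^\pm$. The only cosmetic difference is that you keep the $\Gamma$-translates on the divisor $\sD$ and move $\gamma^n$ to the first argument by equivariance inside each coset, whereas the paper transfers the group action to the first argument at the outset and converts back at the end.
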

 \begin{proof}
Recall first from \eqref{eqn:def-jD(gamma)}
that
\[
j_\sD(\gamma) = \prod\limits_{\alpha \in \Gamma} \left[(\alpha \gamma \tau)-(\alpha\tau); \sD \right],
\]
where $\tau$ is an arbitrary base point in $\sH_p$. Since this infinite product  converges absolutely, it can be rearranged by grouping together the factors that belong to the same 
coset for $\gamma^\Z$ in $\Gamma$
\[
j_\sD(\gamma) = \prod\limits_{\alpha\in \Gamma/\gamma^{\mathbb{Z}}} \left( \prod\limits_{i=-\infty}^{\infty} \left[(\alpha\gamma^{i+1}\tau)-(\alpha\gamma^{i}\tau); \sD \right] \right).
\]
The innermost product on the right hand side is equal to
\begin{equation*}
\begin{split}
 \lim\limits_{M\to \infty} \prod\limits_{i=-M}^M \left[(\alpha\gamma^{i+1}\tau)-(\alpha\gamma^{i}\tau); \sD\right] & = \lim\limits_{M\to \infty} \left[(\alpha\gamma^{M+1} \tau)-(\alpha\gamma^{-M}\tau); \sD\right] \\ 
 & =   \left[(\alpha\xi^+) -(\alpha\xi^-); \sD\right].
 \end{split}
 \end{equation*}
It follows that
$$
j_\sD(\gamma)= \prod\limits_{\alpha\in \Gamma/\gamma^{\mathbb{Z}}}\left[
  (\alpha\xi^+)-(\alpha\xi^-); \sD\right] =  \prod\limits_{\alpha \in \gamma^{\mathbb{Z}}\backslash\Gamma}\left[ (\xi^+)-(\xi^-); \alpha \sD\right],$$
  where the last equation was obtained by substituting $\alpha$ for $\alpha^{-1}$ and 
  exploiting the fact that the Weil Symbol is $B_{\Q_p}^\times$-equivariant.
  \end{proof}

\subsection{The quotient $\gamma^\Z \backslash \cT$}
  
We will rewrite the infinite product of Lemma \ref{lemma: c_D in terms of a product over cosets of Gamma by gamma} by making an explicit choice of coset representatives for $\gamma^\Z$ in $\Gamma$, well adapted to the calculation at hand. To make this choice, we will exploit the action of $\gamma^\Z$ on the Bruhat-Tits tree $\cT$. We explain some of the properties of such action.
  
The  element $\gamma\in \Gamma$ is hyperbolic at $p$, and acts on $V_{\Q_p}$ with three distinct eigenvalues $\varpi, 1$, and $ \varpi^{-1}$, where $\varpi$ is a global $p$-unit  of norm $1$ in the quadratic imaginary field that splits the characteristic polynomial of $\gamma$ (relative to an embedding of this quadratic imaginary field into $\Q_p$). The valuation $\ord_p(\varpi) = 2t>0$ is an even integer. Letting  $V[\lambda]$ denote the eigenspace in $V$ on which $\gamma$ acts as multiplication by $\lambda$, one obtains the decomposition
\begin{equation} \label{eqn:eigen-decomp}
	V_{\Q_p} = V[ {\varpi}] \oplus V[{\varpi^{-1}}] \oplus V[1].
\end{equation}
The first two eigenspaces are isotropic and together generate a hyperbolic plane in $V_{\Q_p}$ whose orthogonal complement is $V[1]$. The fixed points $\xi^+$ and $\xi^-$ of $\gamma$  in $C_V(\Q_p)$ correspond to the isotropic lines $V[\varpi]$ and $V[{\varpi^{-1}}]$ respectively. Given a $\Z_p$-lattice $L\subset V_{\Q_p}$, 
the eigenspace decomposition  \eqref{eqn:eigen-decomp} induces a containment
\[ L \supset L[\varpi] \oplus L[{\varpi^{-1}}] \oplus L[1], 
\]
where 
$$
L[\varpi] \defeq L \cap V[\varpi], \qquad L[{\varpi^{-1}}] = L \cap V[{\varpi^{-1}}], \qquad
L[1] = L \cap V[1].$$
\begin{definition}\label{def: depth of a lattice}
	The {\em depth} of $L$ with respect to $\gamma$ is the integer $n$ such that 
	\[
	p^{2n} = [L :L[\varpi] \oplus L[{\varpi^{-1}}] \oplus L[1]].
	\]
\end{definition} 
The depth measures how far $L$ is from admitting an eigenspace 
decomposition under $\gamma$ as modules over $\Z_p$. Lattices that are of depth $0$ are precisely  those that decompose into a direct sum  of eigen-submodules for $\gamma$.
If $L= L[\varpi]\oplus L[1] \oplus L[\varpi^{-1}]$ is of depth zero, then the same is true of the lattices 
\[
L_j = p^j L[\varpi] \oplus L[1] \oplus p^{-j} L[\varpi^{-1}], \qquad j \in \Z.
\]
The unimodular lattices $L_j$ and $L_{j+1}$ are $p$-neighbours, and the element $\gamma$ sends $L_j$ to $L_{j+2t}$. After fixing the base lattice $L_0$, the sequence of successive $p$-neighbours
\[
g_\gamma = \big\{ \ldots, L_{-2}, L_{-1}, L_0, L_1, L_2, L_3, \ldots  \big\}
\]
determines an infinite geodesic on $\cT$ which is globally preserved by $\gamma$. We suppose that the enumeration is done so that $L_0$ is an even vertex of $\cT$.

\begin{definition}\label{remark: relation depth with distance}
	Let $L\subseteq V_{\Q_p}$ be a unimodular $\Z_p$-lattice in $V_{\Q_p}$.
	The lattice $L_{i}\in g_\gamma$ that is closest to $L$ is called the {\em parent} of $L$.
\end{definition}
	The distance from $L$ to its parent is equal to the depth of $L$.
A fundamental region  for $\gamma^\Z\backslash\cT_0$ can therefore be defined by setting
\[
\cT_{0,\gamma} \defeq \big\{ L\in \cT_0 \mbox{ with } \mathrm{Parent}(L) \in \{ L_0, L_1, L_2, \ldots L_{2t-1} \}   \big\}.
\]
The subset $\cT_{0, \gamma }\subset \cT$ can be written as an increasing union of finite subsets
\[
\cT_{0, \gamma}  =  \bigcup_{n\ge 0} \cT_{0, \gamma}^{\le n}, \qquad 
\cT_{0, \gamma}^{\le n} \defeq \{ L \in \cT_{0, \gamma}  \mbox{ with } \mathrm{depth}(L) \le n \}.
\]
Let $\cA_{\gamma}$ respectively $\cA_{\gamma}^{\le n}$ be the subsets of $\sH_p$ given as the preimages of $\cT_{0, \gamma}$ respectively $\cT_{0, \gamma}^{\le n}$ under the reduction map. The set $\cA_{\gamma}$ can thus be expressed as an increasing union of affinoid subsets,
\begin{equation}\label{eq: A 0 gamma as an increase union}
\cA_{\gamma} = \bigcup_{n\ge 0} \cA_{\gamma}^{\le n}.
\end{equation}

Using $\cT_{0, \gamma}$, we proceed to give several fundamental regions for $\gamma^\Z \backslash \cT_1$. Define $\cT_{1, \gamma, e}$ to be the set of edges in $\cT_1$ such that its even vertex is in $\cT_{0, \gamma}$. For a given vertex $L \in \mathcal T_0$, let $W_L \subset \mathcal T_1$ be the set of open edges that have $L$ as one of its endpoints. We then have, 
\[
\cT_{1, \gamma, e} = \bigcup_{n \geq 0} \cT_{1, \gamma , e}^{\leq n}, \qquad \cT_{1,\gamma, e}^{\leq n} := \bigcup_{\substack{L \text{ even} \\ L \in \cT_{0,\gamma}^{\leq n} }} W_L.
\]
Let $\cW_{\gamma , e}$ and $\cW_{\gamma , e}^{\leq n}$ be the preimage by $\red$ of $\cT_{1, \gamma , e}$ and $\cT_{1, \gamma, e}^{\leq n}$, respectively. We then have
\begin{equation}\label{eq: W 1 gamma e as an increase union}
	\cW_{\gamma, e} = \bigcup_{n \geq 0} \cW_{\gamma , e}^{\leq n}.
\end{equation}
Observe that for every $n$ the set $\cW_{\gamma , e}^{\leq n}$ can be written as the disjoint union of sets of the form $\cW_L - \cA_L$, where $L$ runs over even vertices in $\cT_{0, \gamma}^{\leq n}$.
Similarly, define $\cT_{1 , \gamma, o}$, $\cT_{1, \gamma , o}^{\leq n}$, $\cW_{\gamma , o}$ and $\cW_{\gamma , o}^{\leq n}$ by replacing even by odd everywhere.

\subsection{Computation of $j_\sD(\gamma)$ for divisors of strong degree $0$}
With the notations given in the previous section in place, we can prove the following formulas. 
\begin{proposition}\label{prop:theta-gamma-div}
Let $\sD$ be a divisor on $\sH_p$ of strong degree zero supported on preimages of vertices of $\cT$ under the reduction map. Let $\hat{\sD} = \sum_{\alpha \in \Gamma} \alpha \sD \in {\Div}^\dag(\sH_p)$. Then, 
\[
j_\sD(\gamma) = \lim_{n\rightarrow \infty} \left[(\xi^+) -(\xi^-); \hat\sD\cap \cA_{\gamma}^{\le n}\right].
\]
\end{proposition}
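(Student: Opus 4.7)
The plan is to rearrange the product formula from Lemma \ref{lemma: c_D in terms of a product over cosets of Gamma by gamma} by grouping contributions according to which vertex of the fundamental domain $\cT_{0,\gamma}$ they lie over, and then exhaust $\cT_{0,\gamma}$ by its finite subsets $\cT_{0,\gamma}^{\le n}$.

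First, I would start from the identity
\[
j_\sD(\gamma) = \prod_{\alpha \in \gamma^\Z \backslash \Gamma}\bigl[(\xi^+)-(\xi^-); \alpha \sD\bigr]
\]
provided by Lemma \ref{lemma: c_D in terms of a product over cosets of Gamma by gamma}. Since $\sD$ is supported on preimages of vertices (and hence so is each $\alpha \sD$), I decompose $\alpha \sD = \sum_{L \in \cT_0} (\alpha \sD) \cap \cA_L$ into finitely many pieces, one per vertex in the support. The strong degree zero hypothesis applied to $\sD$ (and its $\Gamma$-translates) guarantees that each summand $(\alpha \sD) \cap \cA_L$ has degree zero, so by bilinearity of the Weil symbol each factor splits as
\[
\bigl[(\xi^+)-(\xi^-); \alpha \sD\bigr] = \prod_{L \in \cT_0} \bigl[(\xi^+)-(\xi^-); (\alpha \sD) \cap \cA_L\bigr].
\]

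Next, for each vertex $L \in \cT_0$ appearing, write $L = \gamma^k L'$ with $L' \in \cT_{0,\gamma}$ and $k \in \Z$ the unique such decomposition. Because $\gamma$ fixes both $\xi^+$ and $\xi^-$ and the Weil symbol is $B_{\Q_p}^\times$-equivariant, each term is unchanged under translation by $\gamma^{-k}$:
\[
\bigl[(\xi^+)-(\xi^-); (\alpha \sD) \cap \cA_{\gamma^k L'}\bigr] = \bigl[(\xi^+)-(\xi^-); (\gamma^{-k}\alpha \sD) \cap \cA_{L'}\bigr].
\]
As $\alpha$ ranges over a set of coset representatives for $\gamma^\Z \backslash \Gamma$ and $k$ ranges over $\Z$, the element $\gamma^{-k}\alpha$ ranges bijectively over $\Gamma$. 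Reindexing the double product by $(L', \beta) \in \cT_{0,\gamma} \times \Gamma$ with $\beta = \gamma^{-k}\alpha$, and again invoking bilinearity of the Weil symbol, I obtain
\[
j_\sD(\gamma) = \prod_{L' \in \cT_{0,\gamma}} \prod_{\beta \in \Gamma} \bigl[(\xi^+)-(\xi^-); (\beta \sD) \cap \cA_{L'}\bigr] = \prod_{L' \in \cT_{0,\gamma}} \bigl[(\xi^+)-(\xi^-); \hat\sD \cap \cA_{L'}\bigr],
\]
where in the last step I recognized $\sum_{\beta} (\beta \sD) \cap \cA_{L'} = \hat\sD \cap \cA_{L'}$, a finite divisor of degree zero by the strong degree zero hypothesis.

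Finally, using the exhaustion $\cT_{0,\gamma} = \bigcup_n \cT_{0,\gamma}^{\le n}$ and one more application of bilinearity, the infinite product over $\cT_{0,\gamma}$ can be identified with the limit
\[
\lim_{n\to\infty}\prod_{L' \in \cT_{0,\gamma}^{\le n}} \bigl[(\xi^+)-(\xi^-); \hat\sD \cap \cA_{L'}\bigr] = \lim_{n\to\infty} \bigl[(\xi^+)-(\xi^-); \hat\sD \cap \cA_\gamma^{\le n}\bigr],
\]
which is the desired formula. The main obstacle I anticipate is justifying the rearrangement of the infinite product: the absolute convergence statement from property (4) of the modular Weil symbol ensures that the original product over $\gamma^\Z \backslash \Gamma$ converges in $\C_p^\times$, and each reorganized partial product can be interpreted as a sub-product of the original (after accounting for the finitely many vertices contributing at a given depth), so care must be taken to check that the partial products of the regrouped expression correspond to cofinal partial products of the original. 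This can be done by observing that $\cA_\gamma^{\le n}$ captures exactly the $\gamma^\Z$-translates of points lying over the first $n$ layers of the fundamental domain, so the limit on the right hand side is precisely the appropriate cofinal limit of the convergent product on the left.
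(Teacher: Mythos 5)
Your proof is correct and follows essentially the same route as the paper: both start from Lemma \ref{lemma: c_D in terms of a product over cosets of Gamma by gamma}, use the strong degree zero hypothesis to split the divisor into degree-zero pieces supported on single affinoids $\cA_L$, exploit the $\gamma$-invariance of $\xi^{\pm}$ and the $B_{\Q_p}^\times$-equivariance of the Weil symbol to translate each piece into the fundamental domain $\cT_{0,\gamma}$, identify the regrouped product with $\hat\sD \cap \cA_\gamma$, and then order the (absolutely convergent, hence rearrangeable) product along the exhaustion $\cA_\gamma = \bigcup_n \cA_\gamma^{\le n}$. The only difference is bookkeeping --- the paper decomposes $\sD$ once into pieces $\sD_{L_i}$ and picks the unique coset representative $\gamma^{k_{i,\alpha}}\alpha$ moving each piece into $\cA_\gamma$, whereas you decompose every translate $\alpha\sD$ and reindex the double product --- which amounts to the same computation.
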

\begin{proof}
	Since $\sD$ is of strong degree $0$ we can write $\sD = \sum_{i = 1}^r \sD_{L_i}$, where $\sD_{L_i}$ is a degree $0$ divisor supported on $\cA_{L_i}$ and the $L_i$ are vertices in $\cT_0$. By Lemma \ref{lemma: c_D in terms of a product over cosets of Gamma by gamma}, we have
	\begin{equation}\label{eq: calculation jDgamma}
	j_\sD(\gamma) = \prod_{\alpha \in \gamma^\Z \backslash \Gamma}\left[(\xi^+) - (\xi^-); \alpha \sD\right] = \prod_{i = 1}^r \prod_{\alpha \in \gamma^\Z \backslash \Gamma} \left[(\xi^+) - (\xi^-); \alpha \sD_{L_i}\right].
	\end{equation}
	Now, it follows from the definition of $\cT_{0,\gamma}$, that for every $i \in \{1, \dots, r\}$ and for every class $[\alpha] \in \gamma^\Z \backslash \Gamma$, there is precisely one representative $\gamma^{k_{i, \alpha}} \alpha$ such that $\gamma^{k_{i, \alpha}} \alpha \sD_{L_i}$ is supported on $\cA_{\gamma}$. This implies that if we write
	\[
	\hat{\sD} = \sum_{\alpha \in \Gamma} \alpha \sD = \sum_{i = 1}^r \sum_{\alpha \in \gamma^\Z \backslash \Gamma} \sum_{k = -\infty}^{+\infty}\gamma^k \alpha \sD_{L_i},
	\]
	we have 
	\[
	\hat{\sD} \cap \cA_{\gamma} = \sum_{i = 1}^r \sum_{\alpha \in \gamma^\Z\backslash \Gamma} \gamma^{k_{i, \alpha}} \alpha \sD_{L_i}.
	\]
	On the other hand, using \eqref{eq: calculation jDgamma} and that $\xi^+$ and $\xi^-$ are fixed by $\gamma$ we can write  
	\[
	j_\sD(\gamma) = \prod_{i = 1}^r \prod_{\alpha \in \gamma^\Z\backslash \Gamma} \left[(\xi^+) - (\xi^-); \gamma^{k_{i, \alpha}} \alpha \sD_{L_i}\right].
	\]
	Combining the last two equalities, and specifying the order of multiplication on the last expression for $j_\sD(\gamma)$ given by the increasing union of \eqref{eq: A 0 gamma as an increase union}, we obtain the desired result.
\end{proof}

We can obtain similar expressions for $j_\sD(\gamma)$ when $\sD$ is a divisor of strong degree $0$ supported on preimages of edges by the reduction map. 

\begin{proposition}\label{prop:theta-gamma-div edges}
	Let $\sD$ be a divisor on $\sH_p$ supported on preimages of edges of $\cT$ by the reduction map. Let $\hat{\sD} = \sum_{\alpha \in \Gamma} \alpha \sD \in {\Div}^\dag(\sH_p)$. We have:
	\begin{enumerate}
		\item If $\sD = \sD_e$ is of strong degree $0$ in an even sense, then 
		\[
		j_{\sD_e}(\gamma)  = \lim_{n \to +\infty} \left[(\xi^+) - (\xi^-); \hat{\sD} \cap \cW_{\gamma, e}^{\leq n}\right].
		\] 
		\item If $\sD = \sD_o$ is of strong degree $0$ in an odd sense, then 
		\[
		j_{\sD_o}(\gamma)  = \lim_{n \to +\infty} \left[(\xi^+) - (\xi^-); \hat{\sD} \cap \cW_{\gamma, o}^{\leq n} \right].
		\]
	\end{enumerate}
\end{proposition}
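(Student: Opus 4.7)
The plan is to mimic almost verbatim the proof of Proposition \ref{prop:theta-gamma-div}, with vertices replaced by open edges grouped according to their even (resp.~odd) endpoint. I will sketch case (1); case (2) follows by swapping ``even'' for ``odd'' throughout.

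First I would decompose $\sD_e$. By the parity convention on $\cT$, every edge has a unique even endpoint, so assigning an open edge to that endpoint partitions the edges of $\cT$ by even vertices: the block at an even $L$ consists of the open edges lying in $\cW_L\setminus \cA_L$. Since $\sD_e$ is finite and supported on preimages of open edges, this lets me write
\[
\sD_e = \sum_{i=1}^r \sD_{e, L_i},
\]
where the $L_i$ are pairwise $\Gamma$-inequivalent even vertices (after translating each summand by an element of $\Gamma$, which leaves $j_{\sD_e}$ unchanged) and $\sD_{e, L_i}$ is supported in $\cW_{L_i}\setminus \cA_{L_i}$. The ``strong degree $0$ in an even sense'' hypothesis at $L_i$ gives $\deg(\sD_{e, L_i}) = \deg(\sD_e \cap \cW_{L_i}) = 0$, so each $\sD_{e, L_i}$ is a legitimate degree-zero divisor; the analogous condition at odd vertices is automatic since $\sD_e$ meets no $\cA_L$.

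Next, Lemma \ref{lemma: c_D in terms of a product over cosets of Gamma by gamma} yields
\[
j_{\sD_e}(\gamma) = \prod_{i=1}^{r} \prod_{\alpha \in \gamma^{\Z}\backslash\Gamma} \bigl[(\xi^+)-(\xi^-);\alpha\sD_{e,L_i}\bigr].
\]
For each $i$ and each class $[\alpha]\in \gamma^\Z\backslash\Gamma$ I would pick the unique $k_{i,\alpha}\in\Z$ such that $\gamma^{k_{i,\alpha}}\alpha L_i \in \cT_{0,\gamma}$. The crucial point is that $\gamma$ preserves the parity of vertices of $\cT$: it acts on its axis $g_\gamma$ by translation of the even length $2t$, and this makes the partition of $\cT_0$ into even and odd subsets $\gamma$-invariant. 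Hence $\gamma^{k_{i,\alpha}}\alpha L_i$ is still an even vertex, so the support of $\gamma^{k_{i,\alpha}}\alpha \sD_{e,L_i}$ sits inside $\cW_{\gamma, e}$. Using that $\bar\Gamma$ acts freely on $\cT$ and that the $L_i$ are $\Gamma$-inequivalent, these supports are pairwise disjoint, producing the decomposition
\[
\hat{\sD}\cap \cW_{\gamma, e} = \sum_{i=1}^r\sum_{\alpha \in \gamma^\Z\backslash \Gamma} \gamma^{k_{i,\alpha}}\alpha \sD_{e,L_i}.
\]

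Finally, since $\xi^\pm$ are fixed by $\gamma$, each factor $[(\xi^+)-(\xi^-);\alpha\sD_{e,L_i}]$ equals $[(\xi^+)-(\xi^-);\gamma^{k_{i,\alpha}}\alpha\sD_{e,L_i}]$, and I would order the resulting absolutely convergent infinite product according to the exhaustion $\cW_{\gamma, e}=\bigcup_n\cW_{\gamma, e}^{\leq n}$ from \eqref{eq: W 1 gamma e as an increase union} to obtain the claimed limit formula. The only substantive input beyond the vertex case of Proposition \ref{prop:theta-gamma-div} is this parity preservation by $\gamma$, which guarantees that the translated supports land in $\cT_{1,\gamma,e}$ rather than spilling across both parity classes; this will be the only real checkpoint in the argument.
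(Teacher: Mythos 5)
Your proof is correct and follows essentially the same route as the paper's own argument: the same decomposition of $\sD_e$ into degree-zero pieces supported on the sets $\cW_{L_i} - \cA_{L_i}$ at even vertices, the same application of Lemma \ref{lemma: c_D in terms of a product over cosets of Gamma by gamma}, the same choice of representatives $\gamma^{k_{i,\alpha}}\alpha$ placing each piece inside $\cW_{\gamma,e}$, and the same reordering of the absolutely convergent product along the exhaustion \eqref{eq: W 1 gamma e as an increase union}. One small precision at your ``checkpoint'': parity preservation must be invoked for the full element $\gamma^{k_{i,\alpha}}\alpha$, not only for $\gamma$ --- you need $\alpha L_i$ to be even as well, which holds because every reduced-norm-one element of $B_{\Q_p}^\times$ (hence every element of $\Gamma$, not just $\gamma$) is parity-preserving, as recalled in Section \ref{section: Cerednik--Drinfeld theorem}.
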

\begin{proof}
	We only give the proof for the first case, the second being similar. Write $\sD = \sum_{i = 1}^r \sD_{L_i}$, where $\sD_{L_i}$ is a degree $0$ divisor supported on $\cW_{L_i} - \cA_{L_i}$ and the set $\{L_1, \dots, L_r\}$ consists on even vertices of $\cT_0$. Hence, we have
	\[
	j_\sD(\gamma) = \prod_{\alpha \in \gamma^\Z \backslash \Gamma}\left[(\xi^+) - (\xi^-); \alpha \sD\right] = \prod_{i = 1}^r \prod_{\alpha \in \gamma^\Z\backslash \Gamma}\left[(\xi^+) - (\xi^-); \alpha \sD_{L_i}\right].
	\]
	Now, for every class $[\alpha] \in \gamma^\Z \backslash \Gamma$ and $L_i$ even vertex as above, there exists precisely one representative $\gamma^{k_{i, \alpha}} \alpha$ such that $\gamma^{k_{i, \alpha}} \alpha L_i \in \cT_{0, \gamma}$. It follows from there that the divisor $\gamma^{k_{i , \alpha}} \alpha \sD_{L_i}$ is supported on $\gamma^{k_{i, \alpha}} \alpha \cW_{L_i} -  \gamma^{k_{i, \alpha}} \alpha \cA_{L_i} = \cW_{\gamma^{k_{i, \alpha}} \alpha L_i} - \cA_{\gamma^{k_{i, \alpha}} \alpha L_i} \subset \cW_{\gamma, e}$. This implies that if 
	\[
	\hat{\sD} = \sum_{i = 1}^r \sum_{\alpha \in \gamma^\Z\backslash \Gamma} \sum_{k = -\infty}^{+\infty} \gamma^k \alpha \sD_{L_i},
	\]
	we have 
	\[
	\hat{\sD} \cap \cW_{\gamma, e} = \sum_{i = 1}^r \sum_{\alpha \in \gamma^\Z \backslash \Gamma} \gamma^{k_{i, \alpha}} \alpha \sD_{L_i}.
	\]
	On the other hand,  
	\begin{equation}\label{eq: middleexpressioncDeltaedges}
		j_\sD(\gamma) = \prod_{\alpha \in \gamma^\Z \backslash \Gamma}\left[(\xi^+) - (\xi^-); \gamma^{k_{i, \alpha}}\alpha \sD_{L_i}\right].
	\end{equation}	
	Note that $\cW_{\gamma,e}^{\leq n}$ can be written as a union of sets of the form $\cW_{L_i} - \cA_{L_i}$, where the union is over even vertices in $\cT_{0, \gamma}^{\leq n}$. Hence, $\hat{\sD} \cap \cW_{\gamma,e}^{\leq n}$ is a degree $0$ divisor (because $\sD$ is of strong  degree $0$). Moreover, the increasing union over $n$ of the sets $\cW_{\gamma,e}^{\leq n}$ covers $\cW_{\gamma,e}$, as we deduced in \eqref{eq: W 1 gamma e as an increase union}. This implies that we can use these sets to specify an order of multiplication on \eqref{eq: middleexpressioncDeltaedges} to obtain the desired expression.
\end{proof}

\section{Abel--Jacobi images of Heegner divisors}\label{section: AJ images of Heegner divisors}
We use the results of Section \ref{section: values of p adic theta functions} to compute Abel--Jacobi images of the Heegner divisors introduced in Section \ref{section: Cerednik--Drinfeld theorem}. More precisely, let $\Phi$ be a convenient Schwartz--Bruhat function and fix $D \in \cD_S$. Choose $\sD_\Phi(D)_e, \sD_\Phi(D)_o$ divisors on $\sH_p$ of strong degree $0$ in an even and odd sense respectively that lift $\Delta_\Phi(D)$ and let $\hat{\sD}_\Phi(D) = \pi^\ast\Delta_\Phi(D)$.
If $\Delta_\Phi(D)$ is supported on preimages of vertices under the reduction map, we suppose that $\sD_\Phi(D)_e =  \sD_\Phi(D)_o$ and we will drop the subindices $e$ and $o$.
At last, let $\gamma\in \Gamma$ be an element hyperbolic at $p$.
We will compute $j_{\sD_\Phi(D)_e}\cdot j_{\sD_\Phi(D)_o} (\gamma)$.

\subsection{Values of theta functions associated to Heegner divisors}\label{section: First part of computation j DPhi(D)}
Since $\Phi$ is invariant under multiplication by $p$, we have $\Delta_\Phi(D) = \Delta_\Phi(Dp^{2n})$ for every $n \geq 0$. Therefore, we will assume here and for the rest of the section that $D$ is an element of $\cD_S$ with $\ord_p(D) \in \{ 0 , 1\}$. 
In view of the notion of depth of a lattice with respect to $\gamma$, which was introduced in Definition \ref{def: depth of a lattice} and Remark \ref{remark: relation depth with distance}, the following definition will be relevant.
\begin{definition}
	Let $v \in V$ be a vector such that $Q(v)=D$.
	The {\em depth} of $v$ with respect to $\gamma$ is
	\[
	\mathrm{depth}(v)  \defeq \min_{\substack{L \ni v}} \{ \mathrm{depth}(L) \},
	\]
	where the minimum is taken over all unimodular $\Z_p$-lattices in $V_{\Q_p}$ such that $v \in L$. 
\end{definition} 
Note that by Lemma \ref{lem: howmanylattices} there are exist at most two unimodular $\Z_p$-lattices containing $v$.
We now present the computation of $j_{\sD_\Phi(D)_e}\cdot j_{\sD_\Phi(D)_o}(\gamma) $, which is slightly different according to the $p$-adic valuation of $D$.
Let $n\geq 1$.
If $\ord_p(D) = 0$, consider
\[
\hat\sD_\Phi(D) \cap \cA_\gamma^{\le n} =    \sum\limits_{ \substack{ Q(v) = D, \\ L_v \in \cT_{0,\gamma}^{\le n}}} \Phi(v) \Delta(v),
\]
where the sum is over the vectors $v\in V$. Since this divisor is of degree zero, as $\Phi$ is convenient, the function on $C_V$ given by
\[
	\xi \mapstoo \prod\limits_{ \substack{ 
	Q(v) = D, \\ L_v \in \cT_{0,\gamma}^{\le n}}} \langle \tilde\xi, v\rangle^{\Phi(v)},
\]
where $\tilde \xi$ is any  vector in the isotropic line generated by $\xi$ in $V_{\C_p}$, is well-defined and has divisor equal to $\hat \sD_\Phi(D)  \cap \cA_\gamma^{\le n}$. Therefore, if $\tilde \xi^+$ and $\tilde \xi^-$ are vectors in $V_{\C_p}$ generating the $\C_p$-lines $\xi^+$ and $\xi^-$ introduced in Section \ref{section: values of p adic theta functions}, Proposition \ref{prop:theta-gamma-div} implies
\begin{equation} \label{eq: theta gamma vertices with depth}
j_{\sD_\Phi(D)}(\gamma) = \lim\limits_{n\rightarrow\infty} 
\prod\limits_{ \substack{ 
		Q(v) = D, \\
		L_v \in \cT_{0,\gamma}^{\le n}}} \left(\frac{\langle \tilde\xi^+, v\rangle}{\langle \tilde\xi^-, v\rangle}\right)^{\Phi(v)} = \lim\limits_{n\rightarrow\infty}  \prod\limits_{ \substack{ v \in \gamma^\Z\backslash V \\ Q(v) = D, \\ \mathrm{depth}(v) \leq n}} \left(\frac{\langle \tilde\xi^+, v\rangle}{\langle \tilde\xi^-, v\rangle}\right)^{\Phi(v)}.
	\end{equation}
Here, the second equality follows from the fact that the terms appearing in the expression of the middle do not change if we replace $v$ by $\gamma v$. If $\ord_p(D) = 1$, we can proceed similarly. In that case, the function on $C_V(\C_p)$ given by
\[
\xi \mapstoo \prod_{\substack{Q(v) = D \\ e_v \in \cT_{1, \gamma, {e}}^{\leq n}}} \langle \xi, v \rangle^{\Phi(v)} \prod_{\substack{Q(v) = D \\ e_v \in \cT_{1, \gamma, {o}}^{\leq n}}} \langle \xi, v \rangle^{\Phi(v)} 
\]
has divisor equal to 
\[
\hat{\sD}_\Phi(D) \cap   \cW_{\gamma, {e}}^{\leq n} + \hat{\sD}_\Phi(D) \cap \cW_{\gamma, {o}}^{\leq n}. 
\]
It then follows from Proposition \ref{prop:theta-gamma-div edges} that
\begin{equation}\label{eq: theta gamma edges with depth}
	\begin{split}
		j_{\sD_\Phi(D)_e}\cdot j_{\sD_\Phi(D)_o}(\gamma) &=  \lim_{n \to +\infty } \prod_{\substack{Q(v) = D \\ e_v \in \cT_{1, \gamma, {e}}^{\leq n}}} \left( \frac{\langle \tilde\xi^+, v \rangle}{\langle \tilde\xi^-, v \rangle}\right)^{\Phi(v)} \prod_{\substack{Q(v) = D \\ e_v \in \cT_{1, \gamma, {o}}^{\leq n}}} \left( \frac{\langle \tilde\xi^+, v \rangle}{\langle \tilde\xi^-, v \rangle}\right)^{\Phi(v)} \\ 
		&= \lim_{n \to +\infty } \prod_{\substack{ v \in \gamma^\Z \backslash V \\ Q(v) = D \\ \mathrm{depth}(v) \leq n-1 }} \left( \frac{\langle \tilde\xi^+, v \rangle}{\langle \tilde\xi^-, v \rangle}\right)^{\Phi(v)} \prod_{\substack{ v \in \gamma^\Z \backslash V \\ Q(v) = D \\ \mathrm{depth}(v) \leq n}} \left( \frac{\langle \tilde\xi^+, v \rangle}{\langle \tilde\xi^-, v \rangle}\right)^{\Phi(v)},
	\end{split}
\end{equation}
where, in the second equality, we used that for every vector $v \in \gamma^\Z \backslash V$ with $Q(v) = D$, the term $ {\langle \tilde\xi^+, v \rangle}^{\Phi(v)}/{\langle \tilde\xi^-, v \rangle}^{\Phi(v)}$ appears two times if the distance from $e_v$ to the geodesic $g_\gamma$ preserved by $\gamma$ is lower or equal than $n-1$, and one time if it is equal to $n$.

\subsection{Vectors of length $D$ in $\gamma^\Z \backslash V$}
Recall that $D \in \mathcal D_S$ is such that $\ord_p(D) \in \{ 0 , 1\}$. We give a concrete choice of representatives of the quotient
\begin{equation}\label{eq: v length D depth leq n mod gamma}
\left\{ v \in \gamma^\Z \backslash V \ \middle| \  Q(v) = D, \ \mathrm{depth}(v) \leq n \right\}.
\end{equation}
This will lead to a relation between $j_{\sD_\Phi(D)}(\gamma)$ and Fourier coefficients of theta series in the next section.

Recall the $\Z_p$-lattices $L_0, L_1, \dots, L_{2t - 1}$ introduced in Section \ref{section: values of p adic theta functions}, which form a set of representatives modulo $\gamma^\Z$ of the vertices in the geodesic of $\cT$ stabilized by $\gamma$. Let $\{w^+ , e , w^-\}$ be generators of $L_0[\varpi]$, $L_0[1]$ and $L_0[\varpi^{-1}]$ respectively. Then, 
\[
w_j^+ = p^j w^+,\  e , \  w_j^- = p^{-j} w^-
\]
are generators of $L_j[\varpi]$, $L_j[1]$ and $L_j[\varpi^-]$ for every $j$. Define
\[
L_{j}^{+}[n] \defeq \left\{ v \in L_j \cap V \ \middle| \  Q(v) = Dp^{2n} \mbox{ and }  \langle v,   w_j^+\rangle  \in \Z_p^\times \right\}
\]
and define $L_j^-[n]$ in a similar way as above but replacing the symbol $+$ by the symbol $-$ everywhere. The motivation for the definition of $L_j^+[n]$ and $L_j^-[n]$ is the following. Let $\cT_j^+[n]$ be the subset of $\cT = \cT_0 \cup \cT_1$ of elements $x \in \cT$ that are at distance equal to $n$ from $L_j$ and satisfy that: 
\begin{enumerate}
	\item If $ x = L$ is a vertex and $\mathrm{Parent}(L) = L_k$, then $k \geq j$.
	\item If $x$ is an edge, for any of its endpoints $L$ we have that if $\mathrm{Parent}(L) = L_k$, then $k \geq j$.
\end{enumerate}
Define $\cT_j^-[n]$ in a similar way but replacing the symbol $\geq$ by the symbol $\leq$ everywhere. It then follows from the description of the action of $\gamma^\Z$ in $\cT$ that the disjoint union
\[
\bigcup_{j = 0}^{2t - 1} \left\{ v \in V \ \middle| \  Q(v) = D , \red(\Delta(v)) \subset \cT_j^+[n]  \right\}
\]
gives a set of representatives of \eqref{eq: v length D depth leq n mod gamma}. Similarly, the same holds if we replace the symbol $+$ by the symbol $-$. 

\begin{lemma}\label{lemma: elementary but key with details}
	The map
	\[
	L_j^+[n] \xlongrightarrow{\sim} \left\{ v \in V \ \middle| \  Q(v) = D , \red(\Delta(v)) \subset \cT_j^+[n]  \right\}, \ u \mapstoo u/p^n
	\]
	is bijective. The same result holds if we replace the symbol $+$ by the symbol $-$ everywhere.
\end{lemma}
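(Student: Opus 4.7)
The plan is to verify the bijection by a direct computation in eigenbasis coordinates. Normalize generators $w^+, e, w^-$ of $V[\varpi], V[1], V[\varpi^{-1}]$ so that $\langle w^+, w^-\rangle \in \Z_p^\times$ and $Q(e) \in \Z_p^\times$ (which is possible because $L_0$ is unimodular). Any $u \in L_j$ can then be written
\[
u = a p^j w^+ + b e + c p^{-j} w^-, \qquad a, b, c \in \Z_p.
\]
Using the orthogonality relations $\langle w^\pm, w^\pm\rangle = \langle w^\pm, e\rangle = 0$, which follow from $\gamma$ being an isometry with distinct eigenvalues on the relevant pairs, one computes $\langle u, w_j^+\rangle = c\langle w^-, w^+\rangle$, so the condition $\langle u, w_j^+\rangle \in \Z_p^\times$ defining $L_j^+[n]$ is equivalent to $c \in \Z_p^\times$; similarly $Q(u) = ac\langle w^+, w^-\rangle + b^2 Q(e)$, and $Q(u) = Dp^{2n}$ becomes a relation between $a,b,c$ and $Dp^{2n}$ with both coefficients units.

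Injectivity of $u \mapsto v := u/p^n$ is immediate. For well-definedness, expand
\[
v = a p^{j-n} w^+ + b p^{-n} e + c p^{-j-n} w^-.
\]
Since $c \in \Z_p^\times$, the $w^-$-component of $v$ has $p$-adic valuation exactly $-j-n$. This, combined with the norm identity $Q(v) = D$ and the hypothesis $\ord_p(D) \in \{0,1\}$, constrains the valuations of $a$ and $b$ and pins down, via Lemma \ref{lem: howmanylattices}, the unimodular $\Z_p$-lattice(s) containing $v$: in every subcase the minimal lattice on $g_\gamma$ containing $v$ is $L_{j+n}$, and (when $\ord_p(D) = 1$) the second unimodular lattice containing $v$ is either $L_{j+n+1}$ or an off-geodesic $p$-neighbour of $L_{j+n}$. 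In each situation the containing simplex is at distance exactly $n$ from $L_j$ with parent on $g_\gamma$ of the form $L_k$, $k \geq j$, so $\red(\Delta(v)) \subset \cT_j^+[n]$.

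For surjectivity, given $v$ in the target set, put $u = p^n v$ and read the coordinate expansion backward: the distance condition on the containing unimodular lattice(s) together with the positive-direction condition on their parent(s) force the $w^-$-component of $u$ to be a unit multiple of $p^{-j}$ and the $w^+$- and $e$-components to lie in $p^j\Z_p$ and $\Z_p$ respectively, whence $u \in L_j \cap V$ with $c \in \Z_p^\times$, i.e., $u \in L_j^+[n]$. The proof for $L_j^-[n]$ is identical after interchanging the roles of $w^+$ and $w^-$.

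The main obstacle I anticipate is the case analysis when $\ord_p(D) = 1$: one must verify that the \emph{second} unimodular lattice provided by Lemma \ref{lem: howmanylattices}(2) also lies in $\cT_j^+[n]$, by computing the reduction $\bar v \in L_{j+n}/pL_{j+n}$ and identifying the isotropic line it spans with the $p$-neighbour of $L_{j+n}$ on the correct side of $g_\gamma$; the norm identity is what ensures that this line points away from $\xi^-$, so that the edge $\red(\Delta(v))$ sits at distance $n$ from $L_j$ in the positive direction rather than at distance $n-1$.
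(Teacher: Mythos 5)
Your coordinate set-up is fine: with $u = a\,p^j w^+ + b\,e + c\,p^{-j}w^-$ one indeed has $\langle u, w_j^+\rangle = c\,\langle w^-,w^+\rangle$ and $Q(u) = ac\,\langle w^+,w^-\rangle + b^2 Q(e)$, and injectivity is immediate. But the pivotal claim on which both your well-definedness and your surjectivity arguments rest --- that ``in every subcase the minimal lattice on $g_\gamma$ containing $v$ is $L_{j+n}$'' --- is false. From the expansion $v = a\,p^{j-n}w^+ + b\,p^{-n}e + c\,p^{-j-n}w^-$, the vector $v$ lies in a geodesic lattice $L_m$ only if $\ord_p(b)\geq n$ and $\ord_p(a)\geq 2n$, and the norm equation does not force this: take $j=0$, $n=1$, $b=c=1$ and $a = (Dp^2 - Q(e))\langle w^+,w^-\rangle^{-1}$ (a unit), which gives a perfectly good element $u \in L_0^+[1]$ whose quotient $v=u/p$ has $e$-coordinate $1/p$, hence lies in \emph{no} lattice on $g_\gamma$ at all. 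In general the unique unimodular lattice $L_v\ni v$ is off-geodesic, at distance $n$ from $L_j$, and its parent can be $L_k$ for \emph{any} $k$ with $j \leq k \leq j+n$ (in the example above it is $L_0$ itself, not $L_1$). Locating $\red(\Delta(v))$ in the tree is precisely the non-trivial content of the lemma, and your coordinate computation does not do it; consequently the surjectivity step, which ``reads the coordinate expansion backward'' from the same false picture, also fails.

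For comparison, the paper's proof never attempts to identify $L_v$. For well-definedness it uses two separate facts: the condition $\langle u, w_j^+\rangle\in\Z_p^\times$ makes $u$ primitive in $L_j$, and primitivity gives $d(L_j, L_v)=n$ (resp.\ $d(L_j,e_v)=n$) by the standard divisibility-equals-distance property of the tree; the unit condition is then used only to exclude parents $L_k$ with $k<j$. For surjectivity it does not deduce the coordinates from the tree conditions directly. Instead: the distance condition gives $p^n v\in L_j$; a global rationality argument shows $\langle p^n v, w_j^+\rangle\neq 0$ (if it vanished, then $\gamma v - v$ would be a rational eigenvector of eigenvalue $\varpi\notin\Q$, forcing $\gamma v = v$, hence $v\in\langle e\rangle$, contradicting $\sqrt{-D}\notin\Q_p$) --- a degenerate case your sketch never addresses; writing $s=\ord_p\langle p^n v, w_j^+\rangle$ one then has $p^n v\in L_i^+[n]$ for $i=j-s\leq j$, and applying well-definedness at the index $i$ together with the disjointness of $\cT_i^+[n]$ and $\cT_j^+[n]$ for $i\neq j$ forces $i=j$, i.e.\ $s=0$. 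Note also that the obstacle you flag (identifying the second lattice when $\ord_p(D)=1$) is not where the difficulty lies; the genuine difficulties are the distance statement, the parent-direction statement, and ruling out $\langle p^nv, w_j^+\rangle\in p\Z_p$, none of which your argument establishes.
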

\begin{proof}
	We start proving that the map is well-defined. Let $u \in L_j^+[n]$ and let $v = u/p^n$. Since $u$ is primitive, $L_v$ (resp.~$e_v$) is at distance $n$ from $L_j$ if $\ord_p(D) = 0$ (resp.~$\ord_p(D) = 1$). Moreover, the condition $\langle u, w_j^+ \rangle \in \Z_p^\times$ implies that if for any unimodular lattice $L$ containing $v$ we denote $\mathrm{Parent}(L) = L_k$, we have $k \geq j$. Hence, $\red(\Delta(v)) \subset \cT_j^{+}[n]$.
	
	The injectivity of the map is clear, so we are left to prove surjectivity. For that, let $v \in V$ be such that $Q(v) = D$ and $\red(\Delta(v)) \subset \cT_j^+[n]$. Since there is a unimodular $\Z_p$-lattice in $V_{\Q_p}$ containing $v$ at distance $n$ from $L_j$, we have that $p^nv \in L_j$. Note that $\langle p^n v, w_j^+ \rangle \neq 0$. Indeed, for the sake of contradiction suppose that $\langle p^n v, w_j^+ \rangle = 0$. This implies that 
	\[
	p^n v = aw_j^+ + be,
	\]
	for $a, b \in \Z_p$. Then, 
	\[
	\gamma \cdot (p^n v) = a\varpi w_j^+ + be. 
	\]
	Subtracting these two equations, we get that $\gamma v - v \in V$ is either $0$ or it is an eigenvector for the $\Q$-linear action of $\gamma$ on $V$ of eigenvalue $\varpi$. Since $\gamma v - v \in V$ and $\varpi \not \in \Q$, the only possibility is that $\gamma v - v = 0$. This implies that $v \in \langle e \rangle$, giving a contradiction with the fact that $\sqrt{-D} \not \in \Q_p$. We can therefore choose $i \leq j$ such that $p^nv \in L_i^+[n]$. Now, the fact that the map is well-defined applied to the index $i$, together with the observation that the sets $\cT_i^+[n]$ and $\cT_j^+[n]$ are disjoint if $i \neq j$ proves that $i = j$ and we are done.
\end{proof}

We can combine the information of Lemma \ref{lemma: elementary but key with details} for $j = 0, \dots, 2t - 1$ to obtain the following result. 
\begin{proposition}\label{prop: elementary but key}
	Let $n\geq 0$, we have a bijection
	\[
	L_0^+[n] \cup \dots \cup L_{2t - 1}^+[n] \xlongrightarrow{\sim} \left\{ v \in \gamma^\Z \backslash V \ \middle| \  Q(v) = D, \ \mathrm{depth}(v) \leq n \right\}
	\]
	given by $v \mapsto [p^{-n}v]$, where $[p^{-n}v]$ denotes the class of $p^{-n}v \in V$ modulo $\gamma^\Z$. Moreover, the same result is true if we replace the symbol $+$ by the symbol $-$.
\end{proposition}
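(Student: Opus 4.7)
The plan is to deduce the proposition by combining Lemma~\ref{lemma: elementary but key with details}, applied for each $j\in\{0,1,\dots,2t-1\}$, with the observation recorded immediately before the lemma, namely that
\[
\bigsqcup_{j=0}^{2t-1} \left\{ v \in V \ \middle| \  Q(v) = D,\ \red(\Delta(v)) \subset \cT_j^+[n] \right\}
\]
is a set of representatives for $\{v\in\gamma^\Z\backslash V : Q(v)=D,\ \mathrm{depth}(v)\le n\}$. Since Lemma~\ref{lemma: elementary but key with details} identifies each $L_j^+[n]$ with the $j$-th piece of this disjoint union via $u\mapsto u/p^n$, assembling these bijections and post-composing with the projection $V\to \gamma^\Z\backslash V$ will give the desired bijection $u\mapsto [p^{-n}u]$.

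In more detail, I would first argue that the sets $\cT_j^+[n]$ for $j=0,\dots,2t-1$ are pairwise disjoint: by construction each $x\in\cT_j^+[n]$ is at distance exactly $n$ from $L_j$ and its (or its endpoints') parent lies in $\{L_j,L_{j+1},\dots\}$, so the parent equals $L_j$ itself; hence $x$ determines the index $j$ uniquely. This shows that the associated $v$'s, coming from Lemma~\ref{lemma: elementary but key with details}, are pairwise distinct as $j$ varies. Next I would check that the resulting collection exhausts a set of $\gamma^\Z$-representatives of vectors of length $D$ and depth $\leq n$. For a vector $v$ with $Q(v)=D$ and depth $\le n$, pick a unimodular lattice $L\ni v$ realizing the minimum depth; then $\red(\Delta(v))\subset\cT$ lies at distance $\mathrm{depth}(v)\le n$ from the axis $g_\gamma$, and the $\gamma^\Z$-action shifts the parent by $2t$ steps along $g_\gamma$, so there is a unique $j\in\{0,\dots,2t-1\}$ and a unique $\gamma^\Z$-translate $v'$ of $v$ with $\red(\Delta(v'))\subset \cT_j^+[n]$.

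Combining these two ingredients, the map
\[
\bigsqcup_{j=0}^{2t-1} L_j^+[n] \too \left\{v\in\gamma^\Z\backslash V\ \middle|\ Q(v)=D,\ \mathrm{depth}(v)\le n\right\},\qquad u\longmapsto [u/p^n],
\]
is well-defined (by the well-definedness half of Lemma~\ref{lemma: elementary but key with details}), injective (by disjointness of the $\cT_j^+[n]$ together with injectivity of each piece), and surjective (by the uniqueness-of-representative argument combined with the surjectivity half of Lemma~\ref{lemma: elementary but key with details}). The same argument goes through verbatim after swapping $+$ for $-$, exploiting the symmetric geometric description of $\cT_j^-[n]$.

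The main obstacle, and the only step requiring genuine verification rather than bookkeeping, is confirming that depth-$\le n$ corresponds precisely to distance-$\le n$ from the geodesic $g_\gamma$ in such a way that the parent map provides a unique $j\in\{0,\dots,2t-1\}$ for each $\gamma^\Z$-orbit representative. This uses Remark~\ref{remark: relation depth with distance} (depth equals distance to parent) together with the fact that $\gamma$ acts on $g_\gamma$ by translation by $2t$, which already underlies the choice of fundamental domain $\cT_{0,\gamma}$ in Section~\ref{section: values of p adic theta functions}.
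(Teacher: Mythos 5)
Your top-level structure is exactly the paper's own proof: the paper, too, simply assembles Lemma \ref{lemma: elementary but key with details} for $j=0,\dots,2t-1$ and invokes the previously stated fact that the disjoint union $\bigsqcup_{j=0}^{2t-1}\left\{ v\in V \,\middle|\, Q(v)=D,\ \red(\Delta(v))\subset \cT_j^+[n]\right\}$ is a set of representatives of the quotient in \eqref{eq: v length D depth leq n mod gamma}. However, the extra justification you supply for the disjointness of the sets $\cT_j^+[n]$ contains a genuine error: it is \emph{not} true that an element $x\in\cT_j^+[n]$ has $\mathrm{Parent}(x)=L_j$. For instance, the vertex $L_{j+1}$ is at distance $1$ from $L_j$ and its parent is $L_{j+1}$ itself (with index $j+1\geq j$), so $L_{j+1}\in\cT_j^+[1]$ while its parent differs from $L_j$; in general the parent of an element of $\cT_j^+[n]$ can be any $L_k$ with $j\leq k\leq j+n$. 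So the step "hence $x$ determines the index $j$ uniquely" is, as you argue it, unsupported.

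The conclusion you want does hold, but for a different reason, namely the tree distance formula: since the geodesic path in $\cT$ from $x$ to any vertex of $g_\gamma$ passes through $\mathrm{Parent}(x)$, one has $\mathrm{dist}(x,L_j)=\mathrm{depth}(x)+\lvert k-j\rvert$ whenever $\mathrm{Parent}(x)=L_k$ (for an edge, apply this to its endpoints). Hence $x\in\cT_j^+[n]$ forces $k\geq j$ and $\mathrm{depth}(x)+(k-j)=n$, so $j=k-\bigl(n-\mathrm{depth}(x)\bigr)$ is uniquely determined by $x$ and $n$; this gives disjointness over all $j\in\Z$, not just $0\leq j\leq 2t-1$. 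The same formula is what makes your exhaustion step precise: if $v$ has depth $d\leq n$ and its reduction has parent $L_m$, then the translate $\gamma^a v$ has parent $L_{m+2ta}$ and the required index is $j=m+2ta-(n-d)$, which lies in $\{0,\dots,2t-1\}$ for exactly one $a$ (as $a$ varies, $j$ runs through a full residue system modulo $2t$), and the parent condition $k\geq j$ then holds automatically because $k-j=n-d\geq 0$. With this repair your argument is complete and coincides with the paper's.
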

\begin{proof}
	By Lemma \ref{lemma: elementary but key with details} we have that the map
	\[
	L_0^+[n] \cup \dots \cup L_{2t - 1}^+[n] \xlongrightarrow{\sim} \bigcup_{j = 0}^{2t - 1}  \left\{ v \in V \ \middle| \  Q(v) = D , \red(\Delta(v)) \subset \cT_j^+[n]  \right\}, \ u \mapstoo u/p^n
	\]
	is bijective. We conclude the proof by recalling that the right hand side gives a set of representatives of 
	\[
	\left\{ v \in \gamma^\Z \backslash V \ \middle| \  Q(v) = D, \ \mathrm{depth}(v) \leq n \right\}.
	\]
\end{proof}

As a consequence, we obtain the following expression for $j_{\sD_\Phi(D)_e}\cdot j_{\sD_\Phi(D)_o}(\gamma)$.

\begin{theorem}\label{thm: expression j DPhi D in terms of Lj}
	Consider the same notation as above. 
	\begin{enumerate}
		\item If $\ord_p(D) = 0$, we have 
		\[
		j_{\sD_\Phi(D)}(\gamma) = \lim_{n \to +\infty} \prod_{j = 0}^{2t - 1} \frac{\prod_{v \in L_j^+[n]}\langle w_0^+ , v\rangle^{\Phi(v)}}{\prod_{v \in L_j^-[n]}\langle w_0^- , v\rangle^{\Phi(v)}}.
		\]
		\item If $\ord_p(D) = 1$, we have
		\[
		j_{\sD_\Phi(D)_e}\cdot j_{\sD_\Phi(D)_o}(\gamma) = \lim_{n \to +\infty} \prod_{j = 0}^{2t - 1}\frac{\prod_{v \in L_j^+[n] \cup L_j^+[n+1]}\langle w_0^+ , v\rangle^{\Phi(v)}}{\prod_{v \in L_j^-[n] \cup L_j^-[n+1]}\langle w_0^- , v\rangle^{\Phi(v)} }.
		\]	
	\end{enumerate}
\end{theorem}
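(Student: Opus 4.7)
The plan is to start from the expressions for $j_{\sD_\Phi(D)_e}\cdot j_{\sD_\Phi(D)_o}(\gamma)$ already derived in Section \ref{section: First part of computation j DPhi(D)}, namely equations \eqref{eq: theta gamma vertices with depth} and \eqref{eq: theta gamma edges with depth}, and to rewrite each finite truncated product as a ratio of a numerator and denominator, then reparametrize by invoking the two versions of the bijection in Proposition \ref{prop: elementary but key}. The idea is that the $+$-bijection $u\mapsto[p^{-n}u]$ from $\bigsqcup_{j}L_j^+[n]$ is tailor-made for the numerator $\prod_v\langle\tilde\xi^+,v\rangle^{\Phi(v)}$, and symmetrically the $-$-bijection for the denominator. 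One then chooses $\tilde\xi^+=w_0^+$ and $\tilde\xi^-=w_0^-$, which are valid generators of the isotropic lines $\xi^\pm=V[\varpi^{\pm 1}]$.

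For each fixed $n$ the change of variable $v=p^{-n}u$ combined with the bilinearity of $\langle\cdot,\cdot\rangle$ and the specialness of $\Phi$ (which gives $\Phi(p^{-n}u)=\Phi(u)$) converts each term of the original product into $p^{-n\Phi(u)}\langle w_0^{\pm},u\rangle^{\Phi(u)}$. Summing the exponents, the numerator of the product in \eqref{eq: theta gamma vertices with depth} truncated at depth $\le n$ equals $p^{-nN_n^+}\prod_{u\in\bigsqcup_jL_j^+[n]}\langle w_0^+,u\rangle^{\Phi(u)}$, where $N_n^+=\sum_j\sum_{u\in L_j^+[n]}\Phi(u)$; symmetrically for the denominator with $N_n^-$. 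Both $N_n^+$ and $N_n^-$ evaluate, via their respective bijections, to the same quantity $\sum\Phi(v)$ taken over $\{v\in\gamma^\Z\backslash V:Q(v)=D,\,\mathrm{depth}(v)\le n\}$, which is well defined because $\gamma\in R^\times\subset\hat{R}^\times$ makes $\Phi$ invariant under $\gamma$-conjugation. Hence $N_n^+=N_n^-$, the $p$-powers cancel between numerator and denominator, and what remains is precisely the $n$-th truncation of the ratio in part (1), proving part (1) after passing to the limit.

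Part (2) follows by applying the same manipulation separately to each of the two finite products in \eqref{eq: theta gamma edges with depth}, of depths $\le n-1$ and $\le n$ respectively; this yields a ratio involving $L_j^{\pm}[n-1]\cup L_j^{\pm}[n]$, which after reindexing the limiting variable $n\mapsto n+1$ matches the $L_j^{\pm}[n]\cup L_j^{\pm}[n+1]$ shape in the statement. The step I expect to be most delicate is the cancellation of the $p$-power factors: this is a global, not term-by-term, cancellation because the $+$- and $-$-bijections target genuinely different subsets of $V$. What rescues the argument is the clean observation that both bijections enumerate representatives of the same quotient set and that $\Phi$ descends to it, so the two $p$-exponents automatically coincide.
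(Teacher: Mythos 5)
Your proposal is correct and follows essentially the same route as the paper's proof: starting from \eqref{eq: theta gamma vertices with depth} and \eqref{eq: theta gamma edges with depth}, taking $\tilde\xi^{\pm}=w_0^{\pm}$, reparametrizing numerator and denominator through the two bijections of Proposition \ref{prop: elementary but key}, cancelling the resulting powers of $p$, and reindexing in the edge case. The only (minor) divergence is in justifying that cancellation: the paper invokes the degree-$0$ property of $\sD_\Phi(D)$, whereas you note that the $+$ and $-$ bijections enumerate the same quotient set on which $\Phi$ descends, so the two exponents coincide automatically --- both arguments are valid, and yours has the small advantage of not needing $\Phi$ to be convenient for this step.
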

\begin{proof}
	Suppose that $\ord_p(D) = 0$. By \eqref{eq: theta gamma vertices with depth} and Proposition \ref{prop: elementary but key}, we have 
	\[
	j_{\sD_\Phi(D)}(\gamma) = \lim_{n \to +\infty} \prod_{j = 0}^{2t - 1} \frac{\prod_{v \in L_j^+[n]}\langle w_0^+ , vp^{-n}\rangle^{\Phi(v)}}{\prod_{v \in L_j^-[n]}\langle w_0^- , v p^{-n}\rangle^{\Phi(v)}}.
	\]
	Here we used that $w_0^+$ (resp.~$w_0^-$) generates the line $\xi^+$ (resp.~$\xi^-$) and that $\Phi(pv) = \Phi(v)$ for every $v \in V$. Since the divisor $\sD_\Phi(D)$ is of degree $0$, the product of the factors $p^{-n\Phi(v)}$ is equal to $1$, leading to the desired expression. The case when $\ord_p(D) = 1$ is proven in an analogous way, but using \eqref{eq: theta gamma edges with depth}, instead of \eqref{eq: theta gamma vertices with depth}.
\end{proof}

\subsection{Computation of $p$-adic valuations} 
We end the section by using the previous calculations to compute the $p$-adic valuation of  $j_{\sD_\Phi(D)_e} \cdot j_{\sD_\Phi(D)_o}(\gamma)$.
\begin{proposition}\label{prop: j(D) is a p adic unit}
Let $\Phi$ be a convenient Schwartz--Bruhat function. Then
\[
\ord_p(j_{\sD_\Phi(D)_e}\cdot j_{\sD_\Phi(D)_o}(\gamma)) = 0
\]
for all $\gamma\in \Gamma$.
In particular, we have
\[
G^+_\Phi(q)\in \Hom(\Gamma,\Z_p^\times)[[q]].
\]
\end{proposition}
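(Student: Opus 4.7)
The plan is to apply Theorem~\ref{thm: expression j DPhi D in terms of Lj}, compute the $p$-adic valuation of each factor in the product formula for $j_{\sD_\Phi(D)_e}\cdot j_{\sD_\Phi(D)_o}(\gamma)$, and show that the total valuation vanishes as a consequence of the strong degree zero condition on $\Delta_\Phi(D)$. Because $\bar{\Gamma}$ is torsion-free and acts discretely, every non-trivial element of $\bar{\Gamma}$ acts on $\cT$ without fixed point, hence as a translation along an axis, i.e.\ it is hyperbolic at $p$; since $j_{\sD}$ factors through $\bar\Gamma$ and the identity case is tautological, it suffices to treat hyperbolic $\gamma$.

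Fix such a $\gamma$ and retain the eigenbasis notation of Section~\ref{section: values of p adic theta functions}. The first step is a local computation: writing $v \in L_j$ as $v = aw_j^+ + be + cw_j^-$ with $a,b,c \in \Z_p$, unimodularity of $L_0$ forces $\langle w^+, w^- \rangle \in \Z_p^\times$, from which one reads off $\langle v, w_j^+\rangle = c\langle w^+, w^-\rangle$, $\langle v, w_j^-\rangle = a\langle w^+, w^-\rangle$, $\langle w_0^+, v\rangle = cp^{-j}\langle w^+, w^-\rangle$, and $\langle w_0^-, v\rangle = ap^{j}\langle w^+, w^-\rangle$. The defining conditions of $L_j^+[n]$ and $L_j^-[n]$ (that $c$, respectively $a$, be a unit) therefore give $\ord_p \langle w_0^+, v\rangle = -j$ for $v \in L_j^+[n]$ and $\ord_p \langle w_0^-, v\rangle = +j$ for $v \in L_j^-[n]$. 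Taking $\ord_p$ of the product in Theorem~\ref{thm: expression j DPhi D in terms of Lj}, the $j$-th factor contributes $-j \cdot \bigl(\sum_{\mathcal S_j^+}\Phi(v) + \sum_{\mathcal S_j^-}\Phi(v)\bigr)$, where $\mathcal S_j^\pm$ equals $L_j^\pm[n]$ if $\ord_p(D)=0$ and $L_j^\pm[n]\cup L_j^\pm[n+1]$ if $\ord_p(D)=1$.

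The second step is to show each partial sum $\sum_{\mathcal S_j^\pm}\Phi(v)$ is already zero, by invoking strong degree zero. In the case $\ord_p(D)=0$, the bijection of Lemma~\ref{lemma: elementary but key with details} combined with $p$-invariance of $\Phi$ identifies
\[
\sum_{v\in L_j^+[n]}\Phi(v) \;=\; \tfrac{1}{2}\sum_{L \in \cT_j^+[n]\cap\cT_0}\deg\bigl(\hat\sD_\Phi(D)\cap\cA_L\bigr),
\]
and each summand vanishes by strong degree zero on $\cA_L$. In the case $\ord_p(D)=1$, the support of $\hat\sD_\Phi(D)$ lies over edges, so I regroup the edges of $\cT_j^+[n]\cup\cT_j^+[n+1]$ into stars: for each vertex $L\in \cT_j^+[n+1]\cap\cT_0$, its unique neighbor closer to $L_j$ is joined to $L$ by an edge of $\cT_j^+[n]$, and its $p$ neighbors further from $L_j$ by edges of $\cT_j^+[n+1]$, covering the union disjointly. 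The sum then collapses to $\tfrac{1}{2}\sum_{L}\deg(\hat\sD_\Phi(D)\cap\cW_L) = 0$ by strong degree zero on $\cW_L$; the $L_j^-$ cases are symmetric.

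The main obstacle I anticipate is the tree-combinatorial repacking in the $\ord_p(D)=1$ case, requiring careful analysis of how the parent function and the distance from $L_j$ interact on consecutive layers; this should follow cleanly from the tree structure of $\cT$ together with the definition of $\cT_j^+[\cdot]$. Once both sums vanish, every factor in Theorem~\ref{thm: expression j DPhi D in terms of Lj} has trivial $p$-adic valuation, whence $\ord_p(j_{\sD_\Phi(D)_e}\cdot j_{\sD_\Phi(D)_o}(\gamma)) = 0$ for all $\gamma\in\Gamma$ and all $D\in\cD_S$. The conclusion $G_\Phi^+(q)\in\Hom(\Gamma,\Z_p^\times)[[q]]$ is then immediate, since the coefficients were already known to take values in $\Hom(\bar\Gamma,\Q_p^\times)$.
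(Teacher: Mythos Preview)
Your proposal is correct and follows essentially the same route as the paper: both apply Theorem~\ref{thm: expression j DPhi D in terms of Lj}, compute $\ord_p\langle w_0^\pm,v\rangle=\mp j$ on $L_j^\pm[n]$, and reduce the vanishing of the valuation to identities of the form $\sum \Phi(v)=0$ which are then read off from the strong degree zero condition via the bijections of Lemma~\ref{lemma: elementary but key with details}. The only cosmetic difference is that the paper proves the single statement $\sum_{L_j^+[n]}\Phi(v)+\sum_{L_j^+[n+1]}\Phi(v)=0$ uniformly via the decomposition $\cA_j^+[n]\cup\cA_j^+[n+1]=\bigcup_{L\in\cT_j^+[n+1]\cap\cT_0}\cW_L\ \cup\ \bigcup_{L\in\cT_j^+[n]\cap\cT_0}\cA_L$, whereas you separate the cases $\ord_p(D)\in\{0,1\}$ and in the first case obtain the slightly sharper $\sum_{L_j^+[n]}\Phi(v)=0$ directly; your explicit reduction to hyperbolic $\gamma$ is also a point the paper leaves implicit.
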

\begin{proof}
	Using Theorem \ref{thm: expression j DPhi D in terms of Lj}, together with the fact that $\ord_p(\langle w_0^+ , v\rangle ) = -j$ if $v \in L_j^+[n]$, and $\ord_p(\langle w_0^- , v \rangle ) = j$ if $v \in L_j^-[n]$, we deduce that it is enough to show that, for every $j \in \{ 0, \dots, 2t - 1\}$ and for every $n \geq 0$, we have 
	\[
	\sum_{v \in L_j^+[n]} \Phi(v) + \sum_{v \in L_j^+[n+1]}\Phi(v) = 0
	\]
	and that the same statement replacing the symbol $+$ with $-$ everywhere holds (which is proven analogously). Note that the quantity on the left hand side can be interpreted as follows. Denote by $\cA_j^+[n]$ the preimage of $\cT_j^+[n]$ under the reduction map. Then,
	\[
	\hat{\sD}_{\Phi}(D) \cap \left( \cA_j^+[n] \cup \cA_j^+[n+1]\right) = \sum_{\substack{v \in V \\ Q(v) = D \\ \red(\Delta(v)) \subset \cT_j^+[n]}} \Phi(v) \Delta(v) + \sum_{\substack{v \in V \\ Q(v) = D \\ \red(\Delta(v)) \subset \cT_j^+[n+1]}} \Phi(v) \Delta(v),
	\]
	and 
	\begin{equation*}
	\begin{split}
	\deg\left(\hat{\sD}_{\Phi}(D) \cap \left( \cA_j^+[n] \cup \cA_j^+[n+1]\right) \right) & = \sum_{\substack{v \in V \\ Q(v) = D \\ \red(\Delta(v)) \subset \cT_j^+[n]}} 2\Phi(v) + \sum_{\substack{v \in V \\ Q(v) = D \\ \red(\Delta(v)) \subset \cT_j^+[n+1]}} 2\Phi(v) \\ & = \sum_{v \in L_j^+[n]} 2\Phi(v) + \sum_{v \in L_j^+[n+1]} 2\Phi(v),
	\end{split}
	\end{equation*}
	where in the last equality we used Lemma \ref{lemma: elementary but key with details} together with the fact that $\Phi$ is invariant under multiplication by $p$. Recall that for a given lattice $L$ we defined the wide open $\cW_L \subset \sH_p$ in Section \ref{subsec: div of strong degree 0}. Since we have the disjoint union
	\[
	\cA_j^+[n] \cup \cA_j^+[n+1] = \bigcup_{L \in \cT_j^+[n+1] \cap \cT_0}   \cW_{L} \cup \bigcup_{L \in \cT_j^+[n] \cap \cT_0} \cA_L
	\]
	and $\Delta_\Phi(D)$ is of strong degree $0$, the result follows.
\end{proof}

\section{First order $p$-adic deformations of ternary theta series}\label{section: p-adic deformations of theta series}
Fix a convenient Schwartz--Bruhat function $\Phi$.
By Proposition \ref{prop: j(D) is a p adic unit} above we know that $G_\Phi^+(q)$ belongs to $\Hom(\Gamma, \Z_p^\times)[[q]]$.
In order to prove modularity of $G_\Phi^+(q) \in \Hom(\Gamma , \Z_p^\times)_{\Q}[[q]]$ it is therefore enough to prove that
\[
\log_\gamma(G_\Phi^+)(q)\defeq \sum_{D \in \cD_S} \log_p(j_{\sD_\Phi(D)_e}\cdot j_{\sD_\Phi(D)_o}(\gamma)) q^{D} \in \Q_p[[q]]
\]
is a modular form for every $\gamma \in \Gamma$ hyperbolic at $p$, which we fix from now on.
Here $\log_p$ denotes the branch of the $p$-adic logarithm such that $\log_p(p)=0$.
In this section, we use $\gamma$ and $\Phi$ to construct a $p$-adic family of theta series $\Theta_k$, of weight $k + 3/2$ and level $\Gamma_0(4N)$, satisfying the following two properties. First, $\Theta_0 = 0$. Second, if we denote by $\Theta_0'$ the derivative with respect to the $p$-adic variable $k$ evaluated at $k = 0$, and $e_{\mathrm{ord}}$ the so-called $p$-ordinary projector, then $e_{\ord}(\Theta_0') \in S_{3/2}(\Gamma_0(4N), \Q_p)$.
Furthermore, the generating series $\log_\gamma(G_\Phi^+)(q)$ is the projection to the  $U_{p^2} = 1$ eigenspace of $2e_{\mathrm{ord}}\Theta_0 '$.
In particular, it is a cusp form of weight $3/2$ and level $\Gamma_0(4N)$, which proves Theorem \ref{thm:main p-adic}.

\subsection{Shimura correspondence and ordinary subspaces}\label{subsec: Shimura correspondence} Let $k$ be a non-negative integer. For $\ell \nmid N$ denote by $T_{\ell^2}$ (resp.~$T_\ell$) the associated Hecke operator acting on $S_{k + 3/2}(\Gamma_0(4N), \Z)$ (resp.~$S_{2k + 2}(\Gamma_0(4N), \Z)$ and if $\ell \mid  N$ denote by $U_{\ell^2}$  (resp.~$U_\ell$) the associated Hecke operator acting on $S_{k + 3/2}(\Gamma_0(4N), \Z)$ (resp.~$S_{2k + 2}(\Gamma_0(4N), \Z)$. Let $D$ be a square-free integer such that $(-1)^{k+1} D > 0$. Denote by 
\[
\sS_{k,D}\colon S_{k+3/2}(\Gamma_0(4N), \Q) \too S_{2k+2}(\Gamma_0(2N),\Q)
\]
the Shimura lifting map associated to $D$ and $N$. It is given by the following formula 
\[
\sum_{n \geq 1} a_n q^n \mapstoo \sum_{n \geq 1} \left( \sum_{\substack{ d \mid  n \\ (d, N) = 1}}\left( \frac{D}{d} \right)d^{k}a_{\lvert D \rvert n^2/d^2} \right) q^n.
\]
This map is equivariant with respect to the Hecke operators introduced above. We will need the following key theorem. 

\begin{theorem}\label{thm: Shimura iso}
		Let $k \geq 0$. There exists a finite collection $\{D_j\}_j$ such that $D_j \equiv 0$ modulo $8N$ and $\{c_j\}_j \in \Q$ such that 
		\[
		\sS_k = \sum_j c_j \sS_{k,D_j}\colon S_{k+3/2}(\Gamma_0(4N),\Q) \xlongrightarrow{\sim} S_{2k+2}(\Gamma_0(2N), \Q)
		\]
		is a Hecke-equivariant isomorphism.
\end{theorem}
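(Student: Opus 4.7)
The plan is to combine Hecke-equivariance of individual Shimura lifts with non-vanishing of twisted lifts and a dimension-matching argument. First, I would verify that each $\sS_{k,D}$ with $D \equiv 0 \pmod{8N}$ is a Hecke-equivariant map into the target $S_{2k+2}(\Gamma_0(2N), \Q)$. Equivariance at primes $\ell \nmid N$ (sending $T_{\ell^2}$ to $T_\ell$) is Shimura's original theorem, while at primes $\ell \mid N$ the corresponding identity for $U_{\ell^2}$ and $U_\ell$ is a standard refinement. The congruence $8N \mid D$ ensures that the quadratic character $\left(\frac{D}{\cdot}\right)$ is trivial modulo $4N$, which guarantees that the lift lies in the target with trivial nebentypus (rather than in a twisted or higher-level space).

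Next, I would invoke the Waldspurger--Kohnen non-vanishing theorem: for every Hecke eigenform $f \in S_{k+3/2}(\Gamma_0(4N), \overline{\Q})$, there exists a fundamental discriminant $D$ with $(-1)^{k+1}D > 0$ and $D \equiv 0 \pmod{8N}$ such that $\sS_{k,D}(f) \neq 0$. Since the space of eigenforms is finite-dimensional, one may select finitely many $D_j$ for which $\bigcap_j \ker \sS_{k,D_j} = 0$. Writing $\sS_{k,D_j}(f) = \lambda_j(f) F$, where $F$ denotes the integer-weight Shimura lift of $f$, a general-position argument over $\Q^r$ yields rational coefficients $c_j$ such that $\sum_j c_j \lambda_j(f) \neq 0$ simultaneously for every eigenform $f$ in a basis. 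The resulting map $\sS_k = \sum_j c_j \sS_{k,D_j}$ is then a Hecke-equivariant injection.

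To upgrade this injection to an isomorphism, I would use the dimension equality $\dim S_{k+3/2}(\Gamma_0(4N), \Q) = \dim S_{2k+2}(\Gamma_0(2N), \Q)$. This follows from combining Kohnen's theorem, which identifies the plus-subspace $S^+_{k+3/2}(\Gamma_0(4N))$ with $S_{2k+2}(\Gamma_0(N))$, with an Atkin--Lehner analysis at $2$ matching the orthogonal complement of the plus-subspace with the $2$-new part of $S_{2k+2}(\Gamma_0(2N))$. The hard part is this dimension-matching step: it packages several classical results in half-integer weight theory and relies crucially on $N$ being odd and square-free. Once available, rank-nullity promotes the injection $\sS_k$ to the claimed Hecke-equivariant isomorphism.
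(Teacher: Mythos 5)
The paper's own proof of this theorem is a two-line citation: the Hecke-equivariant isomorphism over $\C$ is exactly Remark 1 on p.~221 of \cite{MRV} (the newform theory of Manickam--Ramakrishnan--Vasudevan for the \emph{full} space $S_{k+3/2}(\Gamma_0(4N))$, $N$ odd and square-free), and rationality follows from the Serre--Stark integral-basis fact recorded earlier in the paper. Your proposal instead tries to reconstruct that theorem from scratch, and the reconstruction has a genuine gap at the injectivity step. The identity $\sS_{k,D_j}(f)=\lambda_j(f)F$, on which your general-position argument rests, is Shimura's proportionality theorem, and it requires $f$ to be an eigenform of the \emph{full} Hecke algebra, including $U_{\ell^2}$ for $\ell\mid N$ and the operator at $2$. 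For $f$ an eigenform of only the good operators $T_{\ell^2}$, $\ell\nmid 2N$, the coefficients $a_{|D|n^2}(f)$ with $n$ divisible by bad primes are not determined by $a_{|D|}(f)$, and the lifts $\sS_{k,D_j}(f)$ land in an eigenspace of $S_{2k+2}(\Gamma_0(2N))$ that is typically two-dimensional (the $2$-old space attached to a newform of odd level), with no reason to be collinear. Producing a basis of full-Hecke eigenforms would require semisimplicity and multiplicity one for the $U$-operators on the half-integral side, and that is precisely the structure theory the theorem encodes; assuming it is essentially circular. Without it, ``trivial common kernel plus general position'' is false as pure linear algebra: the maps $M_1=\left(\begin{smallmatrix}1&0\\0&0\end{smallmatrix}\right)$ and $M_2=\left(\begin{smallmatrix}0&1\\0&0\end{smallmatrix}\right)$ from $\Q^2$ to $\Q^2$ have trivial common kernel, yet no linear combination $c_1M_1+c_2M_2$ is injective.

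The non-vanishing input is also not citable in the form you need. Waldspurger--Kohnen-type theorems relate $a_{|D|}(f)^2$ to central values $L(F\otimes\chi_D,1/2)$ multiplied by local factors; in your family the twists $\chi_D$ are ramified at $2$ and at every $\ell\mid N$, the local factors depend on which vector of the eigenspace $f$ is and may vanish, and the root numbers vary with $D$. In particular, for eigenforms whose Shimura lift has odd analytic rank --- exactly the forms this paper cares about, cf.\ the form $f$ in Section 7 --- non-vanishing of some lift in the restricted family cannot be read off from standard quadratic-twist non-vanishing results; in \cite{MRV} it is obtained structurally (newform theory and trace identities), not from $L$-values. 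Two smaller points: the congruence $D_j\equiv 0\pmod{8N}$ is incompatible with the standing requirement that $D$ be square-free (an inconsistency inherited from the statement itself, but one that any self-contained proof must resolve by fixing the normalization of $\sS_{k,D}$); and your dimension count is off, since the orthogonal complement of the Kohnen plus space must account for one full extra copy of $S_{2k+2}(\Gamma_0(N))$ in addition to the $2$-new part --- though the dimension equality itself is true and, as you correctly flag, is where the serious classical input (Niwa, Kohnen, Ueda, MRV) lies.
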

\begin{proof}
	The result follows from Remark 1 on Page 221 of \cite{MRV} and the fact that the $\C$-span of $S_{k+3/2}(\Gamma_0(4N), \Q)$ is equal to the space of cusp forms of weight $3/2$ and level $\Gamma_0(4N)$ by the theorem of Serre and Stark.
\end{proof}

Consider the space $\Z_p[[q]] \otimes_{\Z_p} \Q_p$ equipped with the norm
\[
\Big\lvert \sum_{n \geq 0} a_n q^n \Big\rvert = \mathrm{max}_n \{ \lvert a_n \rvert \}.
\]
Since the eigenvalues of $U_{p^2}$ acting on $S_{k+3/2}(\Gamma_0(4N), \Q)$ are algebraic integers, the operator 
\[
e_{\ord}\colon S_{k+3/2}(\Gamma_0(4N), \Z_p) \too S_{k + 3/2}(\Gamma_0(4N), \Z_p), \ f \mapstoo \lim_{m\to +\infty} U_{p^2}^{m!}(f)
\]
is well-defined.
Denote by $S_{k+3/2}^{\ord}(\Gamma_0(4N), \Z_p)$ the image of this map, and similarly define $S_{k+3/2}^{\ord}(\Gamma_0(4N), \Q_p)$.
We also consider the analogous definition for integral weight cusp forms and use similar notation. 

\begin{proposition}\label{prop: dim ordinary spaces for half-integral weight constant}
	The rank of the finitely generated modules $S_{k + 3/2}^{\ord}(\Gamma_0(4N), \Z_p)$
	is constant as long as $k$ varies over non-negative integers such that $k \equiv 0 \bmod (p-1)/2$.
\end{proposition}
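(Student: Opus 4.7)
The plan is to reduce to the integral weight case via the Shimura correspondence of Theorem~\ref{thm: Shimura iso} and then invoke Hida's classical constancy theorem for ordinary cusp forms of integral weight.

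First, I would observe that the $\Q$-linear Hecke-equivariant isomorphism
\[
\sS_k \colon S_{k+3/2}(\Gamma_0(4N), \Q) \xlongrightarrow{\sim} S_{2k+2}(\Gamma_0(2N), \Q)
\]
intertwines $T_{\ell^2}$ with $T_\ell$ for $\ell \nmid N$ and, crucially, $U_{p^2}$ with $U_p$ since $p \mid N$. Tensoring with $\Q_p$ and passing to the limit under $U_{p^2}^{m!}$ on the source and $U_p^{m!}$ on the target, $\sS_k$ therefore induces a $\Q_p$-linear isomorphism
\[
S_{k+3/2}^{\ord}(\Gamma_0(4N), \Q_p) \xlongrightarrow{\sim} S_{2k+2}^{\ord}(\Gamma_0(2N), \Q_p).
\]

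Second, I would invoke Hida's control theorem for ordinary integral weight cusp forms at tame level $2N^+\prod_{\ell \in S\setminus\{p,\infty\}}\ell$ with $p$ occurring exactly once in the level (recall $N^+$ is coprime to $p$ and $N/p$ is an integer). This theorem asserts that the $\Q_p$-dimension of $S_{2k+2}^{\ord}(\Gamma_0(2N), \Q_p)$ depends only on the residue class of $2k+2$ modulo $p-1$. Under the hypothesis $k \equiv 0 \pmod{(p-1)/2}$ we have $2k+2 \equiv 2 \pmod{p-1}$, so the dimension is independent of $k$.

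Third, I would check that the $\Z_p$-rank of $S_{k+3/2}^{\ord}(\Gamma_0(4N), \Z_p)$ equals the $\Q_p$-dimension of $S_{k+3/2}^{\ord}(\Gamma_0(4N), \Q_p)$. This reduces to showing that the former is a torsion-free, hence free, $\Z_p$-module: it is a $\Z_p$-submodule of $S_{k+3/2}(\Gamma_0(4N), \Z_p)$, which is defined via integral $q$-expansions and is consequently $\Z_p$-torsion-free by the $q$-expansion principle. Combining the three steps yields the desired constancy.

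The main point that requires care is that the Shimura lift $\sS_k = \sum_j c_j \sS_{k,D_j}$ of Theorem~\ref{thm: Shimura iso} depends on $k$ through the constants $c_j \in \Q$ and the auxiliary discriminants $D_j$, but this is harmless here: for each individual $k$ the map $\sS_k$ is a Hecke-equivariant isomorphism of $\Q_p$-vector spaces, and this is all that is needed to match ordinary subspaces dimension-by-dimension and then transfer the constancy statement from Hida theory on the integral weight side. No uniform family of Shimura maps in $k$ is required for the rank comparison, only the existence of $\sS_k$ for each individual $k$ in the arithmetic progression.
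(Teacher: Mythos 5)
Your proposal is correct and follows essentially the same route as the paper's own proof: both transfer the problem to integral weight via the Hecke-equivariant Shimura isomorphism of Theorem~\ref{thm: Shimura iso}, observe that it commutes with the ordinary projector (so it restricts to an isomorphism $S_{k+3/2}^{\ord}(\Gamma_0(4N),\Q_p) \simeq S_{2k+2}^{\ord}(\Gamma_0(2N),\Q_p)$), and then invoke Hida's constancy theorem for ordinary integral weight cusp forms. Your added bookkeeping (the congruence $2k+2 \equiv 2 \bmod (p-1)$, the reduction of $\Z_p$-rank to $\Q_p$-dimension, and the remark that only the existence of $\sS_k$ for each individual $k$ is needed) makes explicit points the paper leaves implicit, but the argument is the same.
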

\begin{proof}
	It is enough to prove that $\mathrm{dim}_{\Q_p} S_{k + 3/2}^{\ord}(\Gamma_0(4N), \Q_p)$ is constant as long as $k \in \Z_{\geq 0}$ and $k \equiv 0 \bmod(p-1)/2$. Viewing $S_{k + 3/2}(\Gamma_0(4N), \Q_p)$ and $S_{2k + 2}(\Gamma_0(2N), \Q_p)$ as subsets of $\Z_p[[q]] \otimes \Q_p$, we see that the extension of scalars to $\Q_p$ of the Shimura isomorphism given in Theorem \ref{thm: Shimura iso} is Hecke-equivariant and continuous. It follows that $e_{\ord}\sS_k = \sS_k e_{\ord}$. From there, we deduce that the restriction of $\sS_k$
	\[
	S_{k + 3/2}^{\ord}(\Gamma_0(4N), \Q_p) \xlongrightarrow{\sim} S_{2k + 2}^{\ord}(\Gamma_0(4N), \Q_p).
	\]
	is an isomorphism.	Since the dimensions of the right hand side are constant as long as $k \equiv 0 \bmod (p-1)$ (see proof of Theorem 3 in Section 7.2 of \cite{H1}), the result follows. 
\end{proof}

\subsection{$\Lambda$-adic forms of half-integral weight}\label{subsec: Lambda adic forms of half-integral weight}
We study the space of $\Lambda$-adic modular forms of half-integral weight and prove a classicality result in this setting. We follow \cite{H1} and \cite{H2}.

Let $\Lambda = \Z_p[[T]]$ denote the Iwasawa algebra over $\Z_p$ and put $u=1+p \in 1 + p\Z_p$. 
A {\em $\Lambda$-adic cusp form of half-integral weight} is a formal power series
\[
F = \sum_{n \geq 1} A_n q^n \in \Lambda[[q]]
\]
such that there exists $k_0$ (dependent on $F$) satisfying that for all $k \geq k_0$ and $k \equiv 0 \bmod (p-1)$, the so-called weight $k$ specialization
\[
F_k\defeq F(u^k - 1) \defeq \sum_{n \geq 1} A_n(u^k - 1)q^n \in \Z_p[[q]],
\]
belongs to $S_{k + 3/2}(\Gamma_0(4N), \Z_p)$.
We denote the space of such forms by $\PP$. We define {\em ordinary} $\Lambda$-adic cusp forms of half-integral weight in the same way as above but replacing $S_{k + 3/2}(\Gamma_0(4N), \Z_p)$ by $S_{k + 3/2}^{\ord}(\Gamma_0(4N), \Z_p)$, and we denote this space by $\PP^{\ord}$. 

A key input to study the space $\PP^{\ord}$ is the fact that $r^{\rm{ord}} = \mathrm{rank}_{\Z_p}S_{k+3/2}^{\ord}(\Gamma_0(4N), \Z_p)$ is constant as long as $k \geq 0$ and $k \equiv 0 \bmod (p-1)$, proven in Proposition \ref{prop: dim ordinary spaces for half-integral weight constant}. 
\begin{theorem}\label{PPord is free of finite rank over Lambda}
	$\PP^{\ord}$ is free of finite rank over $\Lambda$. In particular, $\mathrm{rank}_\Lambda(\PP^{\ord}) \leq r^{\ord}$.
\end{theorem}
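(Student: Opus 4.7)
The plan is to proceed in four steps: establish torsion-freeness, bound the $\Lambda$-rank by $r^{\ord}$, show finite generation via a Cramer's rule argument, and finally upgrade finite generation to freeness by invoking a control theorem of Hida type.

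For torsion-freeness I would simply observe that $\PP^{\ord}$ is by definition a $\Lambda$-submodule of $\Lambda[[q]]$, which is torsion-free since $\Lambda$ is a domain. To bound the rank, I would take any $r^{\ord}+1$ elements $F^{(1)},\dots,F^{(r^{\ord}+1)} \in \PP^{\ord}$ and form the coefficient matrix with entries in $\Lambda$. For each sufficiently large $k$ with $k \equiv 0 \bmod (p-1)$, their weight-$k$ specializations lie in $S^{\ord}_{k+3/2}(\Gamma_0(4N),\Z_p)$, a $\Z_p$-module of rank $r^{\ord}$ by Proposition \ref{prop: dim ordinary spaces for half-integral weight constant}, and are therefore $\Z_p$-linearly dependent. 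Consequently every $(r^{\ord}+1)\times(r^{\ord}+1)$ minor of the coefficient matrix vanishes at every such $T = u^k-1$. Taking $k=(p-1)p^n$ produces infinitely many such zeros accumulating at $0 \in p\Z_p$, and by Weierstrass preparation any nonzero element of $\Lambda=\Z_p[[T]]$ admits only finitely many zeros in $p\Z_p$. Hence these minors vanish identically, giving a $\mathrm{Frac}(\Lambda)$-linear dependence among the $F^{(i)}$; clearing denominators yields a $\Lambda$-linear dependence and shows $\mathrm{rank}_\Lambda(\PP^{\ord}) \leq r^{\ord}$.

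Finite generation would then follow from Cramer's rule: I would fix $F_1,\dots,F_r \in \PP^{\ord}$ (with $r \leq r^{\ord}$) that are $\mathrm{Frac}(\Lambda)$-linearly independent, and a nonzero $r\times r$ minor $\Delta \in \Lambda$ of their coefficient matrix. For any $F \in \PP^{\ord}$, Cramer's rule expresses $\Delta F$ as a $\Lambda$-combination of $F_1,\dots,F_r$, so $\Delta \cdot \PP^{\ord} \subseteq \Lambda F_1 + \cdots + \Lambda F_r$. The right-hand side is finitely generated over the Noetherian ring $\Lambda$, and since multiplication by $\Delta$ is injective on the torsion-free module $\PP^{\ord}$, this identifies $\PP^{\ord}$ with a submodule of a finitely generated $\Lambda$-module, hence finitely generated.

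The main obstacle will be upgrading finite generation to freeness. The plan would be to establish a control theorem identifying $\PP^{\ord}/(T - (u^{k_0}-1))\PP^{\ord}$ with $S^{\ord}_{k_0+3/2}(\Gamma_0(4N),\Z_p)$ for a suitably chosen $k_0$, and then to apply topological Nakayama (valid because $T - (u^{k_0}-1)$ lies in the maximal ideal of $\Lambda$ whenever $u^{k_0}-1 \in p\Z_p$) to bound the minimal number of $\Lambda$-generators of $\PP^{\ord}$ by $r^{\ord}$. Matched against the rank bound and torsion-freeness, any such generating set must be $\Lambda$-linearly independent, forcing $\PP^{\ord}$ to be free of rank $r^{\ord}$. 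Establishing the required surjectivity of the specialization map for ordinary $\Lambda$-adic forms of half-integral weight is the technical heart of the argument, and is the point where the detailed theory of \cite{H1,H2} is needed.
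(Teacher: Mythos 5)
Your first three steps are correct, and they essentially reconstruct what the paper disposes of in one line (its proof of this theorem is a bare citation of Proposition 4 of \cite{H2}, with the remark that the level structures differ but the reasoning carries over): torsion-freeness is immediate from $\PP^{\ord}\subseteq\Lambda[[q]]$, the rank bound via vanishing of $(r^{\ord}+1)$-minors at the infinitely many points $T=u^k-1$ plus Weierstrass preparation is valid, and the Cramer's rule inclusion $\Delta\cdot\PP^{\ord}\subseteq\Lambda F_1+\cdots+\Lambda F_r$ together with Noetherianity of $\Lambda$ gives finite generation. The genuine gap is in the last step, where you upgrade finite generation to freeness. The inference ``minimal number of generators $\leq r^{\ord}$, plus $\mathrm{rank}_\Lambda\leq r^{\ord}$, plus torsion-freeness $\Rightarrow$ the generating set is $\Lambda$-independent'' is a non sequitur: the maximal ideal $\mathfrak{m}=(p,T)\subset\Lambda$ is torsion-free, minimally generated by two elements, of rank $1\leq 2$, and is not free. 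Your rank bound from the second step is an \emph{upper} bound, while independence of a generating set requires the rank to be bounded \emph{below} by the number of generators; nothing in your write-up supplies that lower bound.

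The control input you assume does rescue the argument, but through a mechanism you have not spelled out. If $\PP^{\ord}/P_{k_0}\PP^{\ord}$ is $\Z_p$-free of rank $d$, lift a $\Z_p$-basis and use Nakayama (via $(P_{k_0},p)=\mathfrak{m}$) to get a surjection $\Lambda^{d}\onto\PP^{\ord}$ with kernel $K$; reducing modulo $P_{k_0}$, the induced surjection $\Z_p^{d}\onto\PP^{\ord}/P_{k_0}\PP^{\ord}$ of free modules of equal rank is an isomorphism, so $K\subseteq P_{k_0}\Lambda^{d}$, and torsion-freeness of $\PP^{\ord}$ then forces $K=P_{k_0}K$, whence $K=0$ by Nakayama. (Equivalently: localizing the finitely generated torsion-free module $\PP^{\ord}$ at the height-one prime $P_{k_0}$ gives a free module over the discrete valuation ring $\Lambda_{(P_{k_0})}$, so $\mathrm{rank}_\Lambda\PP^{\ord}=\mathrm{rank}_{\Z_p}\PP^{\ord}/P_{k_0}\PP^{\ord}$, which is the missing lower bound.) Note also a circularity hazard in what you propose to import: in this paper the control theorem (Theorem \ref{thm: classicality for lambda adic forms}) is \emph{deduced from} the freeness you are proving, so you cannot invoke it wholesale. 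Fortunately the argument above never uses surjectivity of $\varphi_{k_0}$; it only needs $\PP^{\ord}/P_{k_0}\PP^{\ord}$ to be $\Z_p$-torsion-free, i.e.\ the \emph{injectivity} of $\varphi_{k_0}$, which is elementary and independent of freeness: if $F_{k_0}=0$ then every coefficient of $F$ is divisible by $P_{k_0}$, and $G=F/P_{k_0}$ again lies in $\PP^{\ord}$ because $G_{k'}=F_{k'}/(u^{k'}-u^{k_0})$ is an integral scalar multiple of an ordinary form. Replacing your appeal to the control theorem by this injectivity statement both closes the gap and removes the circularity.
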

\begin{proof}
	A proof of this statement can be found in Proposition 4 of \cite{H2}. There, Hida considers different level structures than the ones considered here, but the same reasoning works in this case. 
\end{proof}

For every $k \geq 0$, we can define a map 
\[
\varphi_k\colon \PP^{\ord}/P_k \PP^{\ord}\too \Z_p[[q]], \ F \mapstoo F_k,
\]
where $P_k = T - (u^k - 1) \in \Z_p[[T]]$, which is injective. The image of this map is a submodule of $\Z_p[[q]]$. We can also view $S_{k+3/2}^{\ord}(\Gamma_0(4N), \Z_p)$ as a submodule of $\Z_p[[q]]$. The relation between these two submodules is the so-called control theorem, which is again a consequence of Proposition \ref{prop: dim ordinary spaces for half-integral weight constant}. 

\begin{theorem}\label{thm: classicality for lambda adic forms}
	Let $k \geq 0$ such that $k \equiv 0 \bmod (p-1)$. Then, the map $\varphi_k$ induces an isomorphism
	\[
	\varphi_k\colon \PP^{\ord}/P_k \PP^{\ord} \xlongrightarrow{\sim} S_{k+3/2}^{\ord}(\Gamma_0(4N), \Z_p).
	\]
\end{theorem}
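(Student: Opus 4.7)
The plan is to bootstrap the control theorem in half-integral weight from Hida's classical integral-weight control theorem via the Shimura correspondence of Theorem \ref{thm: Shimura iso}. Injectivity of $\varphi_k$ is built into the definition of $\PP^{\ord}$, so only surjectivity needs to be established.

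Let $r = \mathrm{rank}_\Lambda \PP^{\ord}$; by Theorem \ref{PPord is free of finite rank over Lambda} we have $r \leq r^{\ord}$. Since $\Lambda/P_k \simeq \Z_p$, the source $\PP^{\ord}/P_k\PP^{\ord}$ is a free $\Z_p$-module of rank $r$, while the target $S_{k+3/2}^{\ord}(\Gamma_0(4N), \Z_p)$ is free of rank $r^{\ord}$ by Proposition \ref{prop: dim ordinary spaces for half-integral weight constant}. Surjectivity of the injection $\varphi_k$ thus splits into two sub-claims: (a) the equality of ranks $r = r^{\ord}$, and (b) saturation of the image, i.e., the cokernel has no $p$-torsion.

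The main obstacle is (a), which I propose to handle by interpolating the Shimura maps $\sS_{k, D_j}$ of Theorem \ref{thm: Shimura iso} into a single $\Lambda$-adic Shimura map $\sS_\Lambda\colon \PP^{\ord} \to \PP^{\ord}_{\mathrm{int}}$, where $\PP^{\ord}_{\mathrm{int}}$ denotes Hida's classical ordinary $\Lambda$-adic cuspidal module of tame level $\Gamma_0(2N)$ and trivial tame nebentype. Such an interpolation is available because every divisor $d$ appearing in the formula for $\sS_{k, D_j}$ is coprime to $N$, hence to $p$ (since $p\mid N$), so the factor $d^k$ equals $\langle d\rangle^k$ under the congruence $k \equiv 0 \bmod (p-1)$ and interpolates to $(1+T)^{\log_p \langle d\rangle / \log_p u} \in \Lambda$. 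By the classical integral-weight control theorem, $\PP^{\ord}_{\mathrm{int}}$ is $\Lambda$-free of rank equal to $\dim_{\Q_p} S_{2k+2}^{\ord}(\Gamma_0(2N), \Q_p)$, and this rank equals $r^{\ord}$ by the very argument used in the proof of Proposition \ref{prop: dim ordinary spaces for half-integral weight constant}. Since at every classical weight $k \equiv 0 \bmod (p-1)$ the specialization of $\sS_\Lambda$ recovers a suitable linear combination of the Hecke-equivariant isomorphisms $\sS_{k, D_j}$ restricted to the ordinary parts, $\sS_\Lambda$ becomes an isomorphism after inverting a suitable nonzero element of $\Lambda$. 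This forces $r \geq r^{\ord}$, and combined with $r \leq r^{\ord}$ yields the desired equality.

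For (b), once $r = r^{\ord}$ is known, $\varphi_k$ is an injection between free $\Z_p$-modules of equal rank. Saturation then follows by a Nakayama-style argument: the induced mod-$p$ map $\bar{\varphi}_k\colon \PP^{\ord}/(p, P_k)\PP^{\ord} \to S_{k+3/2}^{\ord}(\Gamma_0(4N), \F_p)$ is an $\F_p$-linear map between spaces of equal dimension $r^{\ord}$, and its injectivity can be checked directly on $q$-expansions. The most delicate step I anticipate is the construction of $\sS_\Lambda$ itself, particularly the verification that it carries $\PP^{\ord}$ into $\PP^{\ord}_{\mathrm{int}}$ (not merely into $\Lambda[[q]]$) and that its specializations intertwine the $U_{p^2}$-ordinary projector on the half-integral side with the $U_p$-ordinary projector on the integral side in a $\Lambda$-adically compatible way.
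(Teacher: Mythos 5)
There is a genuine gap, and it sits precisely in the step you flag as the main one, sub-claim (a): the argument there is circular. Your $\Lambda$-adic Shimura map $\sS_\Lambda\colon \PP^{\ord} \to \PP^{\ord}_{\mathrm{int}}$ can indeed be constructed (the interpolation of $d^k$ is fine since every $d$ in the formula is prime to $p$), but its specialization at a classical weight $k$ is defined on $\PP^{\ord}/P_k\PP^{\ord} \simeq \mathrm{Im}(\varphi_k)$, not on all of $S_{k+3/2}^{\ord}(\Gamma_0(4N),\Z_p)$; composed with the inclusion it is the restriction of $\sS_k$ to the subspace $\mathrm{Im}(\varphi_k)$. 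Injectivity of $\sS_k$ therefore only gives injectivity of the specializations, hence (generic) injectivity of $\sS_\Lambda$ and the bound $r \leq r^{\ord}$ --- the direction already known from Theorem \ref{PPord is free of finite rank over Lambda}. For $\sS_\Lambda$ to become an isomorphism after inverting an element of $\Lambda$, which is what you need for $r \geq r^{\ord}$, its specializations must surject onto $\PP^{\ord}_{\mathrm{int}}/P_k\PP^{\ord}_{\mathrm{int}} \simeq S_{2k+2}^{\ord}(\Gamma_0(2N),\Z_p)$; since $\sS_k$ carries $S_{k+3/2}^{\ord}$ isomorphically onto $S_{2k+2}^{\ord}$, that surjectivity is \emph{equivalent} to $\mathrm{Im}(\varphi_k) = S_{k+3/2}^{\ord}(\Gamma_0(4N),\Z_p)$, i.e., to the theorem being proved. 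Put bluntly: your proof never constructs a single element of $\PP^{\ord}$, so it cannot bound $\mathrm{rank}_\Lambda \PP^{\ord}$ from below; a forward Shimura map cannot do this, and the honest repair along your lines would require an inverse ($\Lambda$-adic Shintani) lifting from Hida families of integral weight back to half-integral weight, which you do not construct.

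The missing ingredient --- and the one the paper's proof actually rests on --- is the vertical lifting statement: every $f \in S_{k+3/2}^{\ord}(\Gamma_0(4N),\Z_p)$ lies in $\mathrm{Im}(\varphi_k)$, proved as in the integral-weight case (Proposition 5 of \cite{H2}, by multiplying $f$ by a $\Lambda$-adic Eisenstein family specializing to $1$ and applying the ordinary projector). Granting this, the paper argues: for $k$ large the reverse containment $\mathrm{Im}(\varphi_k) \subseteq S_{k+3/2}^{\ord}$ holds by the very definition of $\PP^{\ord}$, which settles those weights and pins down $\mathrm{rank}_\Lambda \PP^{\ord} = r^{\ord}$; for arbitrary $k \equiv 0 \bmod (p-1)$ (in particular $k=0$, the case the paper needs), the containment $S_{k+3/2}^{\ord}(\Z_p) \subseteq \mathrm{Im}(\varphi_k)$ together with the rank bound $\mathrm{rank}_{\Z_p}\mathrm{Im}(\varphi_k) \leq \mathrm{rank}_\Lambda \PP^{\ord} \leq r^{\ord}$ forces equality. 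Note also that your sub-claim (b) is not an argument as stated: injectivity of $\bar{\varphi}_k$ on mod-$p$ $q$-expansions is a reformulation of the saturation you want, not something one can ``check directly''; in the paper's approach saturation comes for free from the containment $S_{k+3/2}^{\ord}(\Z_p) \subseteq \mathrm{Im}(\varphi_k)$, not the other way around. Your interpolation of the $\sS_{k,D_j}$ is a correct and useful observation --- it is essentially how Proposition \ref{prop: dim ordinary spaces for half-integral weight constant} transfers dimension counts across the correspondence --- but by itself it cannot substitute for the lifting step.
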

\begin{proof}
	The analogous statement for ordinary cuspidal $\Lambda$-adic forms of integral weight is known. A proof can be found in Theorem 3, Section 7.3 of \cite{H1}. The same proof given there works for the case of half-integral weight forms once we have Proposition \ref{prop: dim ordinary spaces for half-integral weight constant}.
	
	Indeed, it can be proven that every element $f \in S_{k+3/2}^{\ord}(\Gamma_0(4N), \Z_p)$ is in the image $\varphi_k$ as in the case of integral weight forms. For example, this is done in Proposition 5 of \cite{H2}. Since $\PP^{\ord}$ is free of finite rank, this already implies the result for $k$ large enough. To obtain the result for general all $k$ note that 
	\begin{equation}\label{eq: chain of inclusions control thm}
		S_{k+3/2}^{\ord}(\Gamma_0(4N), \Z_p) \subset \mathrm{Im}(\varphi_k) \subset \Z_p[[q]].
	\end{equation}
	This implies
	\[
	r^{\ord} \leq \mathrm{rank}_{\Z_p}(\mathrm{Im}(\varphi_k)).
	\]
	Since $\mathrm{rank}_{\Z_p}(\mathrm{Im}(\varphi_k)) \leq \mathrm{rank}_\Lambda(\PP^{\ord})$, the previous inequality and Proposition \ref{PPord is free of finite rank over Lambda} imply $r^{\ord} = \mathrm{rank}_{\Z_p}(\mathrm{Im}(\varphi_k))$. Hence, it follows from \eqref{eq: chain of inclusions control thm} that $\mathrm{Im}(\varphi_k) = 	S_{k+3/2}^{\ord}(\Gamma_0(4N), \Z_p) $ and we are done.
\end{proof}
Fix a $\Lambda$-basis $\{B_1, \dots, B_r\}$ of $\PP^{\ord}$ and write 
\[
B_i = \sum_{n \geq 1} A_{i,n} q^n \in \Lambda[[q]].
\]
By Theorem \ref{thm: classicality for lambda adic forms}, the set $\{B_1(0), \dots, B_r(0)\}$ forms a $\Z_p$-basis of $S_{3/2}^{\rm ord}(\Gamma_0(4N), \Z_p)$.
Thus, there exist $n_1, \dots, n_r$ such that 
\[
\det \left(   (A_{i, n_j}(0))_{1 \leq i , j \leq r}   \right) \neq 0. 
\]
Since $\det((A_{i, n_j}(T))_{1 \leq i , j \leq r}  ) \in \Lambda$, it follows by continuity that there exists $k_0$ such that if $k \geq k_0$ and $k \equiv 0 \bmod (p-1)$, then 
\begin{equation}\label{eq: det nonzero for k large enough}
	\det \left(   (A_{i, n_j}(u^k - 1))_{1 \leq i , j \leq r}   \right) \neq 0. 
\end{equation}
Now define 
\[
\underline{b_i} = \sum_{n \geq 1} a_{i, n }q^n,
\]
where $a_{i,n}\colon \Z_p \to \Z_p$ is the analytic function determined by $a_{i,n}(k) = A_{i, n}(u^k - 1)$ for every $k \geq 0$ such that $k \equiv 0 \bmod (p-1)$. 

We will now prove that certain first order derivatives of $\Lambda$-adic modular forms of half-integral weight are modular forms themselves.
Let $F$ be a $\Lambda$-adic modular form of half-integral weight such that $F_0=0$.
Let 
\[ 
F' \defeq \frac{d}{dk}\left.F_k\right|_{k=0} = \lim_{k \to 0} \frac{F_k}{k} \in \Z_p[[q]]
\]
be the first derivative of $F$ with respect to $k$ evaluated at $k = 0$.
It is a weight $3/2$ analogue of a $p$-adic modular form in the sense of Serre.
Here the limit is taken in $\Z_p[[q]] \otimes \Q_p$ with respect to the norm introduced above.
Recall that $U_{p^2}$ has the following expression at the level of $q$-expansions:
\[
\sum_{n \geq 0} a_nq^n \mapstoo \sum_{n \geq 1} a_{np^2}q^n.
\]
Since $\lvert U_p^2 f \rvert \leq \lvert f \rvert$ for any $f \in \Z_p[[q]] \otimes \Q_p$ and $U_{p^2}$ is linear, it follows that we can define the $p$-adic modular form of weight $3/2$
\[
e_{ \ord}(F') \defeq \lim_{k \to 0}  e_{\mathrm{ord}} \left(\frac{F_k}{k}\right).
\]
Moreover, it is a calculation to verify that the limit 
\[
\lim_{m \to +\infty} U_{p^2}^{m!} (F')
\]
exists in $\Z_p[[q]] \otimes \Q_p$  and is equal to $e_{\ord}(F')$.

\begin{corollary}\label{cor: classicality}
For every $F\in\PP$ with $F_0=0$ the $p$-adic modular form $e_{\ord}(F_0')$ is classical. More precisely, it belongs to $S_{3/2}^{\ord}(\Gamma_0(4N), \Q_p)$.
\end{corollary}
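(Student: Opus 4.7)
The strategy is to lift the calculation of $e_{\ord}(F')$ to the $\Lambda$-adic level by applying the ordinary projector to $F$ itself, and then use the structure theorem for $\PP^{\ord}$ (Theorems \ref{PPord is free of finite rank over Lambda} and \ref{thm: classicality for lambda adic forms}) to read off classicality from a finite-dimensional calculation.

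First, I would promote $e_{\ord}$ to an operator on $\PP$ with values in $\PP^{\ord}$. Concretely, for any $F = \sum A_n q^n \in \PP$, the formula $(e_{\ord} F)_k := e_{\ord}(F_k)$ for each classical weight $k$ assembles into a well-defined element $G := e_{\ord}(F) \in \PP^{\ord}$, essentially because $U_{p^2}$ is a weight-independent operator on $q$-expansions which commutes with $\Lambda$-adic specialization. Since specialization is $\Q_p$-linear and $F_0 = 0$, this yields $G_0 = 0$.

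Next, I would expand $G$ in the chosen $\Lambda$-basis: write $G = \sum_{i=1}^r \lambda_i B_i$ with $\lambda_i \in \Lambda$. Applied at $k = 0$, Theorem \ref{thm: classicality for lambda adic forms} identifies $\PP^{\ord}/T\PP^{\ord}$ with $S_{3/2}^{\ord}(\Gamma_0(4N), \Z_p)$, so $\{B_1(0), \dots, B_r(0)\}$ is a $\Z_p$-basis of $S_{3/2}^{\ord}(\Gamma_0(4N), \Z_p)$. The identity $0 = G_0 = \sum_i \lambda_i(0)\, B_i(0)$ therefore forces $\lambda_i(0) = 0$ for every $i$, so we may write $\lambda_i = T\mu_i$ with $\mu_i \in \Lambda$. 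Differentiating
\[
G_k \;=\; \sum_{i=1}^r (u^k-1)\,\mu_i(u^k-1)\,(B_i)_k
\]
at $k = 0$, and using $u^0 - 1 = 0$, only one term survives and I obtain
\[
G' \;=\; \log(u) \sum_{i=1}^r \mu_i(0)\, B_i(0) \;\in\; S_{3/2}^{\ord}(\Gamma_0(4N), \Q_p),
\]
which is manifestly a classical form.

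Finally, I would identify $G'$ with $e_{\ord}(F')$. By the $\Q_p$-linearity and continuity of $e_{\ord}$ on $\Z_p[[q]] \otimes \Q_p$, together with the commutation of $e_{\ord}$ with $\Lambda$-adic specialization,
\[
e_{\ord}(F') \;=\; \lim_{k \to 0} e_{\ord}\!\left(\frac{F_k}{k}\right) \;=\; \lim_{k \to 0} \frac{(e_{\ord}F)_k}{k} \;=\; \lim_{k \to 0} \frac{G_k}{k} \;=\; G',
\]
giving the desired classicality. The main technical obstacle is the first step: making precise the action of $e_{\ord}$ on the $\Lambda$-adic space $\PP$ (rather than only on each classical specialization $F_k$) and verifying that this action commutes both with specialization at arbitrary $k \in \Z_p$ and with the $k \to 0$ limit defining $F'$. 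Once this is in place, the rest is a short linear-algebra calculation in the finite-dimensional space $S_{3/2}^{\ord}(\Gamma_0(4N), \Q_p)$.
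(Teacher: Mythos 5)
Your steps 2--4 are sound, and your route is genuinely different from the paper's: the paper never defines an ordinary projector at the $\Lambda$-adic level. Instead, for each classical weight $k$ close to $0$ it expands $e_{\ord}(F_k/k)$ in the basis $\{\underline{b_i}(k)\}$ of $S_{k+3/2}^{\ord}(\Gamma_0(4N),\Z_p)$ supplied by Theorem \ref{thm: classicality for lambda adic forms}, observes via Cramer's rule that the coordinates $x_i(k)$ solve a linear system whose matrix $(a_{i,n_j}(k))$ is analytic in $k$ with nonvanishing determinant near $k=0$, deduces that the $x_i(k)$ converge as $k\to 0$, and passes to the limit. Your argument would instead factor $T$ out of the coordinates of $e_{\ord}(F)$ in a $\Lambda$-basis of $\PP^{\ord}$ and differentiate once; this is cleaner and more conceptual \emph{if} your first step can be justified, since it replaces the limiting process by a single algebraic division in $\Lambda$.

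That first step, however, is a genuine gap, and the justification you offer does not work. Commutation of $U_{p^2}$ with specialization only shows that $U_{p^2}$ preserves $\PP$; it says nothing about convergence of $\lim_m U_{p^2}^{m!}F$ inside $\Lambda[[q]]$. Concretely, writing $F=\sum_n A_n q^n$, you must show that for each $n$ the sequence $A_{np^{2\cdot m!}}$ converges in $\Lambda$, equivalently that the functions $k\mapsto a_n(e_{\ord}(F_k))$, defined a priori only at classical weights $k\geq k_0$, are interpolated by elements of $\Lambda$. What you do know is that $U_{p^2}^{m!}F_k$ converges in the finite-rank module $S_{k+3/2}(\Gamma_0(4N),\Z_p)$ for each such $k$, i.e.\ pointwise convergence of $A_{np^{2m!}}$ at the points $u^k-1$; pointwise convergence at these points does not by itself give convergence in $\Lambda$. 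The gap can be filled: for instance, $\Lambda$ is compact and metrizable for the $(p,T)$-adic topology, every limit point of $(A_{np^{2m!}})_m$ takes the value $a_n(e_{\ord}(F_k))$ at $u^k-1$ for all large classical $k$, and a nonzero element of $\Lambda$ has only finitely many zeros in the open unit disc by Weierstrass preparation, so all limit points coincide and the sequence converges; alternatively one can invoke Hida's construction of the idempotent on $\Lambda$-adic forms of half-integral weight in \cite{H2}. But some such argument is indispensable: it is exactly where your proof has real content, and ``weight-independence of $U_{p^2}$'' is not a substitute. Note also that your claim $G_0=0$ silently uses commutation of $e_{\ord}$ with specialization at the weight $k=0$, which is not among the classical weights where $G$ was characterized; with the limit construction above it follows, or one can argue directly that $\lvert G_0\rvert=\lim_k \lvert e_{\ord}(F_k)\rvert\leq \lim_k\lvert F_k\rvert=0$ along classical $k\to 0$, but as written it is part of the same unproved step.
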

\begin{proof}
By definition, 
	\[
	e_{\ord}(F') = \lim_{k \to 0}e_{\ord}\left( \frac{F_k}{k} \right).
	\]
	Now, Theorem \ref{thm: classicality for lambda adic forms} implies that, for every $k > 0$ and $k \equiv 0 \bmod (p-1)$, we can write 
	\[
	e_{\ord}\left( \frac{F_k}{k} \right) = \sum_{i = 1}^r x_i(k) \underline{b_i}(k),
	\]
	where $x_i(k) \in \Q_p$ for every $i$. Let $n_1, \dots, n_r$ be as above, and note that $(x_i(k))_i$ is the solution of the linear system of equations
	\[
	\left(a_{i, n_j}(k)\right)_{j,i}(x_i(k))_i = \left( a_{n_j}\left(\frac{F_k}{k} \right) \right)_j.
	\]
	Moreover, since the determinant of the matrix defining this system is an analytic function, which is non-zero if $k \geq k_0$ and $k \equiv 0 \bmod (p-1)$ by \eqref{eq: det nonzero for k large enough} and the discussion above it, we deduce that for every $i$ the limit $\lim_{k \to 0} x_i(k)$ exists in $\Q_p$.
	Denote it by $x_i(0)$. Then, 
	\[
	e_{\ord}(F') = \lim_{k \to +\infty } e_{\ord}\left( \frac{F}{k} \right) = \sum_{i = 1}^r x_i(0) \underline{b_i}(0)
	\]
	and it follows from Theorem \ref{thm: classicality for lambda adic forms} in the particular case that $k = 0$ that the right hand side belongs to $S_{k + 3/2}^{\ord}(\Gamma_0(4N), \Q_p)$, which concludes the proof.
\end{proof}

\subsection{$p$-adic families of theta series}\label{subsec: p adic family Thetak} Recall that the element $\gamma \in \Gamma$ determines a collection of $\Z_p$-lattices $L_j$ of depth zero, and let $w_j^+$ and $w_j^-$ be generators of the $\Z_p$-modules $L_j[\varpi]$ and $L_j[\varpi^{-1}]$ respectively. Note that $w_j^+ , w_j^-$ can be viewed both as elements of $V_{\Q(\gamma)}$ and $V_{\Q_p}$, using the embedding $\Q(\gamma) \into \Q_p$ satisfying that $\ord_p(\varpi) = 2t > 0$. These data, together with $\Phi$, can be used to define the following Schwartz--Bruhat functions
\[
\Phi_j^+ = \Phi \otimes 1_{\left\{v \in L_j \ \middle| \  \langle v, w_j^+ \rangle \in \Z_p^\times\right\}}, \mbox{ and } \Phi_j^- = \Phi \otimes 1_{\left\{v \in L_j \ \middle| \  \langle v, w_j^- \rangle \in \Z_p^\times\right\}}
\]
on $V_{\A^\infty}$ for every $j \in \{ 0, \dots , 2t -1 \}$. We have that $\Phi$ is invariant under $K_0(4N/p)^{(p)}$ by assumption. Moreover,
\begin{equation}\label{eq: expression Schwartz--Bruhat at p telescope}
	1_{\left\{v \in L_j \ \middle| \  \langle v, w_j^+ \rangle \in \Z_p^\times\right\}} = 1_{L_j} - 1_{L_j \cap L_{j-1}}, \
	1_{\left\{v \in L_j \ \middle| \  \langle v, w_j^- \rangle \in \Z_p^\times\right\}} = 1_{L_j} - 1_{L_j \cap L_{j+1}}
\end{equation}
and $L_j$ is unimodular, while $L_j \cap L_{j-1}$ has level $p$ for every $j$. It follows that $\Phi_j^{\pm}$ is invariant under $K_0(4N)$ for every $j$. Since $w_j^+$ and $w_j^-$ are isotropic, the functions $v\mapsto \langle w_j^\pm,v\rangle^k$ are harmonic polynomials on $V_{\Q(\gamma)}$ for all integers $k \geq 0$. Hence, the $q$-series
\begin{equation}\label{eq: p-adic family Thetak}
\Theta_k \defeq \sum_{j = 0}^{2t - 1}\sum_{v\in V} \Phi_j^+(v) \langle w_j^+, v\rangle^k q^{Q(v)} - 
\sum_{j = 0}^{2t - 1} \sum_{v\in V} \Phi_j^-(v) \langle w_j^-, v\rangle^k q^{Q(v)}
\end{equation}
is a linear combination of classical theta-series with coefficients in the quadratic imaginary field $\Q(\gamma)$ of weight $k+3/2$ and level $\Gamma_0(4N)$ by \cite[Theorem 4.1]{BoAutomorphic}.
Via the embedding $\Q(\gamma) \into \Q_p$, we can also view the Fourier coefficients of $\Theta_k$ as elements in $\Z_p$. Moreover, since the non-zero terms in the infinite sum defining $\Theta_{k}$ solely involve elements of $V$ for which $\langle w_j^+,v\rangle$ (resp.~$\langle w_j^-,v\rangle$) are $p$-adic units, it follows that the Fourier coefficients of $\Theta_{k}$ vary analytically  as functions of the variable $k\in (\Z/(p-1)\Z)\times \Z_p$. We can therefore define $\Theta_k$ for $k \in (\Z/(p-1)\Z) \times \Z_p$. It gives a prototypical instance of a $\Lambda$-adic modular form of half-integral weight, in the sense that there exists a $F \in \PP$ such that $F_k = \Theta_k$ for every $k\equiv 0 \bmod (p-1)$.

\begin{lemma}\label{lem: vanishing}
	The weight $3/2$ specialization $\Theta_{0}$ is identically zero. 
\end{lemma}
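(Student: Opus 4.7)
The plan is to evaluate the defining expression \eqref{eq: p-adic family Thetak} at $k=0$ and exhibit the vanishing by a telescoping argument on the index $j$ combined with the $\gamma$-invariance of the theta series. At $k=0$ the factors $\langle w_j^{\pm},v\rangle^k$ become $1$, so
\[
\Theta_0 \;=\; \sum_{j=0}^{2t-1}\sum_{v\in V}\bigl(\Phi_j^+(v)-\Phi_j^-(v)\bigr)\,q^{Q(v)}.
\]
Using the two identities recorded in \eqref{eq: expression Schwartz--Bruhat at p telescope},
\[
\Phi_j^+-\Phi_j^- \;=\; \Phi\otimes\bigl(1_{L_j\cap L_{j+1}}-1_{L_j\cap L_{j-1}}\bigr).
\]

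Summing over $j=0,\dots,2t-1$, the right-hand side telescopes: setting $c_j\defeq 1_{L_{j-1}\cap L_j}$, we have $1_{L_j\cap L_{j+1}}=c_{j+1}$ and $1_{L_j\cap L_{j-1}}=c_j$, so
\[
\sum_{j=0}^{2t-1}\bigl(\Phi_j^+-\Phi_j^-\bigr)\;=\;\Phi\otimes\bigl(1_{L_{2t-1}\cap L_{2t}}-1_{L_{-1}\cap L_0}\bigr).
\]
All the intermediate terms cancel pairwise, leaving only the boundary contributions attached to the edge $(L_{-1},L_0)$ and its $\gamma$-translate $(L_{2t-1},L_{2t})$.

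The next step is to identify these two remaining contributions via $\gamma$. Since $\gamma$ acts on $V_{\Q_p}$ by conjugation and takes $L_j$ to $L_{j+2t}$, we have $L_{2t-1}\cap L_{2t}=\gamma(L_{-1}\cap L_0)$, i.e.\
\[
1_{L_{2t-1}\cap L_{2t}}(v)\;=\;1_{L_{-1}\cap L_0}(\gamma^{-1}v\gamma)
\]
for $v\in V_{\Q_p}$. Meanwhile, $\gamma\in\Gamma\subset R^\times\subset\hat R^\times$, so the $\hat R^\times$-invariance built into the definition of a special (and hence convenient) Schwartz--Bruhat function gives $\Phi(\gamma^{-1}v\gamma)=\Phi(v)$. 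Substituting $w=\gamma^{-1}v\gamma$ in the sum over $v\in V$, and using that $Q$ is invariant under conjugation, the theta series attached to $\Phi\otimes 1_{L_{2t-1}\cap L_{2t}}$ coincides with the theta series attached to $\Phi\otimes 1_{L_{-1}\cap L_0}$:
\[
\sum_{v\in V}\Phi(v)\,1_{L_{2t-1}\cap L_{2t}}(v)\,q^{Q(v)}\;=\;\sum_{w\in V}\Phi(w)\,1_{L_{-1}\cap L_0}(w)\,q^{Q(w)}.
\]
The two surviving terms thus cancel and $\Theta_0=0$.

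There is no real obstacle here: everything reduces to checking that the Schwartz--Bruhat data at $p$ telescope over a fundamental domain for $\gamma^{\Z}$ on the axis, and that the ``left-over'' boundary terms are exchanged by $\gamma$, which acts trivially on the theta series. The only point requiring minor care is the bookkeeping of the indices to verify that the telescoping sum runs over exactly one period of the $\gamma$-action and that one can genuinely invoke $\Phi\cdot\gamma=\Phi$ (which is immediate from $\gamma\in\hat R^\times$).
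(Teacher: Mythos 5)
Your proof is correct and takes essentially the same route as the paper's own argument: specialize at $k=0$, apply the identities \eqref{eq: expression Schwartz--Bruhat at p telescope} so that the sum over $j$ telescopes down to the boundary terms $1_{L_{2t-1}\cap L_{2t}}-1_{L_{-1}\cap L_0}$, and cancel these using $\gamma(L_{-1}\cap L_0)=L_{2t-1}\cap L_{2t}$ together with the $\gamma$-invariance of $\Phi$ and $Q$. The only difference is that you spell out the change of variables and the source of $\Phi$'s $\gamma$-invariance (note the parenthetical should read ``convenient, and hence special,'' not the reverse), which the paper leaves implicit.
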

\begin{proof}
	By \eqref{eq: expression Schwartz--Bruhat at p telescope}, we have
	\begin{equation*}
	\begin{split}
	\Theta_0 & =  \sum_{j = 0}^{2t - 1}\sum_{v \in V}\Phi(v) \left( 1_{L_j} - 1_{L_j \cap L_{j-1}}\right)(v) q^{Q(v)} - \sum_{j = 0}^{2t - 1}\sum_{v \in V}\Phi(v) \left( 1_{L_j} - 1_{L_j \cap L_{j+1}}\right)(v) q^{Q(v)} \\ & = -\sum_{v \in V} \Phi(v)1_{L_0 \cap L_{-1}}(v)q^{Q(v)} + \sum_{v \in V} \Phi(v) 1_{L_{2t -1} \cap L_{2t}}(v) q^{Q(v)} = 0,
	\end{split}
	\end{equation*}
	where in the last equality we used that $\gamma(L_0 \cap L_{-1}) = L_{2t} \cap L_{2t - 1}$ and that the functions $\Phi$ and $v \mapsto Q(v)$ are invariant under the action of $\gamma^\Z$. 
\end{proof}

Lemma \ref{lem: vanishing} together with Corollary \ref{cor: classicality} immediately imply the following:
\begin{corollary}\label{prop: classicality eordTheta0'}
	The $p$-adic modular form $e_{\ord}(\Theta_0')$ is classical. More precisely, it belongs to $S_{3/2}^{\ord}(\Gamma_0(4N), \Q_p)$.
\end{corollary}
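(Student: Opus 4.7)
The proof is essentially a direct combination of the ingredients already assembled in this subsection, so the plan is quite short. The key observation is that Corollary \ref{cor: classicality} is stated precisely so as to apply to this situation: it takes a $\Lambda$-adic cusp form of half-integral weight whose weight $3/2$ specialization vanishes, and returns a classical ordinary cusp form of weight $3/2$ by differentiating and projecting.

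The first step is to verify that $\Theta_k$ indeed comes from an element $F \in \PP$. This is the content of the paragraph preceding Lemma \ref{lem: vanishing}: because the Schwartz--Bruhat functions $\Phi_j^\pm$ are supported on vectors $v$ with $\langle w_j^\pm, v\rangle \in \Z_p^\times$, each Fourier coefficient
\[
a_n(\Theta_k) = \sum_{j = 0}^{2t - 1}\sum_{\substack{v\in V \\ Q(v) = n}}\left( \Phi_j^+(v) \langle w_j^+, v\rangle^k - \Phi_j^-(v) \langle w_j^-, v\rangle^k \right)
\]
is a finite sum whose terms are of the form $u^k$ with $u \in \Z_p^\times$, hence interpolates to an element $A_n \in \Lambda$ with $A_n(u^k - 1) = a_n(\Theta_k)$ for integers $k \equiv 0 \pmod{p-1}$. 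For such $k$ the specialization $\Theta_k$ is a classical modular form of weight $k + 3/2$ and level $\Gamma_0(4N)$ by \cite[Theorem 4.1]{BoAutomorphic}, so $F = \sum_{n\geq 1} A_n q^n$ lies in $\PP$.

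The second step is to invoke Lemma \ref{lem: vanishing}, which asserts $F_0 = \Theta_0 = 0$. This places $F$ in the hypothesis of Corollary \ref{cor: classicality}, whose conclusion is exactly that $e_{\ord}(F') = e_{\ord}(\Theta_0') \in S_{3/2}^{\ord}(\Gamma_0(4N), \Q_p)$.

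I do not anticipate any real obstacle: the only subtlety is checking that the $\Lambda$-adic interpolation condition is met, and this follows from the support condition built into the definition of $\Phi_j^\pm$. Once this is in place, the corollary is a one-line application of the previously established machinery.
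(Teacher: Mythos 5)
Your proof is correct and is essentially identical to the paper's argument: the paper derives this corollary immediately from Lemma \ref{lem: vanishing} together with Corollary \ref{cor: classicality}, having already established in the paragraph preceding Lemma \ref{lem: vanishing} that $\Theta_k$ arises as the specialization of an element $F \in \PP$ (via exactly the interpolation argument you give, using that the support conditions in $\Phi_j^\pm$ force the quantities $\langle w_j^\pm, v\rangle$ to be $p$-adic units). Your write-up merely makes explicit the $\Lambda$-adic interpolation step that the paper treats as part of the construction of $\Theta_k$, so there is no substantive difference.
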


We now relate $e_{\mathrm{ord}}(\Theta_0')$ with the generating series $\log_\gamma(G_\Phi^+)(q)$.

\begin{lemma}\label{thm: computation q-expansion ordinary projection}
	For every $D \in \cD_S$ and every $n \geq 0$ the following equality holds:
	\[
	a_{Dp^{2n}}\left( \log_\gamma(G_\Phi^+)(q))\right) = a_{Dp^{2n}}\left( e_{\ord}\left( (1 + U_{p^2})\Theta_{0}' \right)   \right).
	\]
	In particular, the right hand side does not depend on $n$.
\end{lemma}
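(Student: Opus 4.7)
The plan is to differentiate $\Theta_k$ term by term in $k$, identify the resulting $q$-expansion with the Abel--Jacobi formulas from Theorem \ref{thm: expression j DPhi D in terms of Lj}, and then reconcile the two sides by unpacking the definition of the ordinary projector coefficient-by-coefficient.

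\textbf{Step 1 (differentiation of $\Theta_k$).} Since the function $\Phi_j^{\pm}$ is supported on vectors $v$ with $\langle w_j^{\pm},v\rangle \in\Z_p^\times$, the map $k\mapsto \langle w_j^{\pm},v\rangle^k$ is $p$-adic analytic on a neighbourhood of $0\in\Z_p$ with derivative $\log_p\langle w_j^{\pm},v\rangle$ at $k=0$. Combined with the definition \eqref{eq: p-adic family Thetak} and the identification of the relevant support sets with the sets $L_j^{\pm}[n]$ (for any $D'=Dp^{2n}$ with $\ord_p(D)\in\{0,1\}$), differentiating term by term gives
\[
a_{Dp^{2n}}(\Theta_0')
= \sum_{j=0}^{2t-1}\sum_{v\in L_j^+[n]}\Phi(v)\log_p\langle w_j^+,v\rangle
  -\sum_{j=0}^{2t-1}\sum_{v\in L_j^-[n]}\Phi(v)\log_p\langle w_j^-,v\rangle.
\]
Because $w_j^{\pm}=p^{\pm j}w_0^{\pm}$ and $\log_p(p)=0$, one may replace $w_j^{\pm}$ by $w_0^{\pm}$ in these formulas.

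\textbf{Step 2 (identification with the $p$-adic period).} Applying $\log_p$ to the product formulas in Theorem \ref{thm: expression j DPhi D in terms of Lj} and comparing with Step 1 yields, for $\ord_p(D)=0$,
\[
\lim_{n\to\infty} a_{Dp^{2n}}(\Theta_0') \;=\; \log_p j_{\sD_\Phi(D)}(\gamma),
\]
and for $\ord_p(D)=1$,
\[
\lim_{n\to\infty}\bigl(a_{Dp^{2n}}(\Theta_0')+a_{Dp^{2n+2}}(\Theta_0')\bigr)
\;=\; \log_p\bigl(j_{\sD_\Phi(D)_e}\cdot j_{\sD_\Phi(D)_o}(\gamma)\bigr).
\]

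\textbf{Step 3 (ordinary projector coefficient-wise).} The operator $U_{p^2}$ shifts Fourier coefficients via $a_{m}(U_{p^2}f)=a_{p^2 m}(f)$, so $a_{m}(e_{\ord}f)=\lim_{r\to\infty}a_{mp^{2r!}}(f)$ whenever the limit exists; by Corollary \ref{prop: classicality eordTheta0'} it does exist (and equals a coefficient of a classical form) for $f=\Theta_0'$. Hence
\[
a_{Dp^{2n}}\bigl(e_{\ord}((1+U_{p^2})\Theta_0')\bigr)
= \lim_{r\to\infty}\Bigl(a_{Dp^{2(n+r!)}}(\Theta_0')+a_{Dp^{2(n+r!)+2}}(\Theta_0')\Bigr).
\]
This is a subsequence of the sequence appearing in Step 2 (in the $\ord_p(D)=1$ case) and the sum of two convergent subsequences (in the $\ord_p(D)=0$ case), so it equals $\log_p\bigl(j_{\sD_\Phi(D)_e}\cdot j_{\sD_\Phi(D)_o}(\gamma)\bigr)$ in both cases (when $\ord_p(D)=0$ we use moreover that $\sD_\Phi(D)_e=\sD_\Phi(D)_o=\sD_\Phi(D)$, so this Weil symbol equals $j_{\sD_\Phi(D)}(\gamma)^2$, matching twice the limit from Step 2).

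\textbf{Step 4 (matching the left-hand side).} Since $\Phi$ is special, the identity \eqref{eq: special} and the construction in Section \ref{subsec: reduction of theorem to convenient Schwartz--Bruhat functions} allow us to take $\sD_\Phi(Dp^{2n})_\bullet=\sD_\Phi(D)_\bullet$ for $\bullet\in\{e,o\}$, giving
\[
a_{Dp^{2n}}(\log_\gamma(G_\Phi^+))
= \log_p\bigl(j_{\sD_\Phi(Dp^{2n})_e}\cdot j_{\sD_\Phi(Dp^{2n})_o}(\gamma)\bigr)
= \log_p\bigl(j_{\sD_\Phi(D)_e}\cdot j_{\sD_\Phi(D)_o}(\gamma)\bigr),
\]
which agrees with the quantity computed in Step 3 and is manifestly independent of $n$.

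The main technical point is the convergence argument of Step 3: one must know a priori that the individual Fourier coefficient $a_{Dp^{2n}}(e_{\ord}\Theta_0')$ makes sense in $\Q_p$, which is exactly the content of Corollary \ref{prop: classicality eordTheta0'} and is the only nontrivial input beyond the explicit formulas of Theorem \ref{thm: expression j DPhi D in terms of Lj}. Once this is in hand, the rest is a direct bookkeeping exercise splitting on the parity $\ord_p(D)\in\{0,1\}$.
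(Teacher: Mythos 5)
Your proposal is correct and follows essentially the same route as the paper's proof: compute $a_{Dp^{2m}}(\Theta_0')$ as a sum of $\log_p$ of pairings over $L_j^{\pm}[m]$, identify $a_D(\log_\gamma(G_\Phi^+))$ with $\lim_m\bigl(a_{Dp^{2m}}(\Theta_0')+a_{Dp^{2(m+1)}}(\Theta_0')\bigr)$ via Theorem \ref{thm: expression j DPhi D in terms of Lj}, and recognize the coefficients of $e_{\ord}\bigl((1+U_{p^2})\Theta_0'\bigr)$ as a limit along the subsequence $m!$, hence equal. The only cosmetic differences are that you split the argument by the parity of $\ord_p(D)$ where the paper uses a single unified limit formula, and you cite Corollary \ref{prop: classicality eordTheta0'} for convergence where the paper relies on the convergence already supplied by Theorem \ref{thm: expression j DPhi D in terms of Lj}.
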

\begin{proof}
	It is enough to prove the formula when $\ord_p(D) \in \{ 0, 1\}$. Using \eqref{eq: p-adic family Thetak},  we can compute
	\[
	a_{Dp^{2m}}(\Theta_0') = \sum_{j = 0}^{2t - 1}\sum_{v \in L_j^+[m]} \Phi(v)\log_p( \langle w_j^+ , v \rangle)  - \sum_{j = 0}^{2t - 1} \sum_{v \in L_j^-[m]} \Phi(v) \log_p( \langle w_j^- , v\rangle ).
	\]
	Hence, it follows from Theorem \ref{thm: expression j DPhi D in terms of Lj} that 
	\begin{equation}\label{eq: a_D log G in terms of a limit of Theta_0'}
	a_{D}\left( \log_\gamma(G_\Phi^+)(q))\right) = \lim_{m \to +\infty} a_{Dp^{2m}}(\Theta_0') + a_{Dp^{2(m+1)}}(\Theta_0').
	\end{equation}
	Note that it is also a consequence of Theorem \ref{thm: expression j DPhi D in terms of Lj}, that the limit on the right hand side exists. 
	
	On the other hand, from the expression of the ordinary projection given above, we have that for every $n \geq 0$
	\begin{equation*}\label{eq: formula ordinary projector}
		a_{Dp^{2n}}(e_{\ord}(\Theta_0' + U_{p^2}(\Theta_0'))) = \lim_{m \to \infty} a_{D p^{2(n + m!)}}(\Theta_0') + a_{D p^{2(n + m! + 1) }}(\Theta_0') .
	\end{equation*}
	Since the right hand side of the previous equation is a subsequence of the right hand side of \eqref{eq: a_D log G in terms of a limit of Theta_0'}, we deduce that 
	\[
	a_{D}\left( \log_\gamma(G_\Phi^+)(q))\right) = a_{Dp^{2n}}(e_{\ord}(\Theta_0' + U_{p^2}(\Theta_0'))).
	\]
\end{proof}

The action of $U_{p^2}$ on $S_{3/2}^{\ord}(\Gamma_0(4N), \overline{\Q}_p)$ diagonalizes. This can be justified, for example, using Theorem \ref{thm: Shimura iso} and the fact that the analogous statement for weight $2$ forms of level $\Gamma_0(2N)$ is well-known. In particular, we can consider
\[
\mathrm{pr}_{1}\colon S_{3/2}^{\ord}(\Gamma_0(4N), \Q_p) \too S_{3/2}^{\ord}(\Gamma_0(4N), \Q_p)
\]
to be the projection to the $U_{p^2} = 1$ eigenspace. We prove the main identity of this work, which implies Theorem \ref{thm:main p-adic}.

\begin{theorem}\label{thm: log(GPhi(gamma)) = pr1(eordTheta_0')}
	The following identity of formal power series holds:
	\[
	\log_\gamma(G_\Phi^+)(q) = 2\mathrm{pr}_{1}(e_{\ord}( \Theta_{0}')).
	\]
	In particular, $\log_\gamma(G_\Phi^+)(q)$ is an element of $S_{3/2}^{\ord}(\Gamma_0(4N), \Q_p)$.
\end{theorem}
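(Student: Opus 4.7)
The plan is to combine Lemma 7.7 with the $U_{p^2}$-eigenspace decomposition of the ordinary subspace $S_{3/2}^{\ord}(\Gamma_0(4N),\Q_p)$. First I would observe that the series $\log_\gamma(G_\Phi^+)(q)$ has the $U_{p^2}$-stability $a_{p^2D} = a_D$: since $\Phi$ is special, \eqref{eq: special} gives $\Delta_\Phi(p^2 D) = \Delta_\Phi(D)$, and Lemma 4.5 ensures that $j_{\sD_\Phi(D)_e}\cdot j_{\sD_\Phi(D)_o}$ depends only on $\Delta_\Phi(D)$. Next, by Corollary \ref{prop: classicality eordTheta0'}, the element $f \defeq e_{\ord}(\Theta_0')$ lies in $S_{3/2}^{\ord}(\Gamma_0(4N),\Q_p)$.

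After extending scalars to $\overline{\Q}_p$, the operator $U_{p^2}$ acts semisimply on the ordinary subspace (as noted just before the theorem, via the Shimura correspondence of Theorem \ref{thm: Shimura iso}), so I may decompose $f = \sum_\lambda f_\lambda$ into $U_{p^2}$-eigencomponents. Then
\[
e_{\ord}\bigl((1+U_{p^2})\Theta_0'\bigr) \;=\; (1+U_{p^2})f \;=\; \sum_\lambda (1+\lambda)f_\lambda,
\]
and applying $U_{p^2}^n$ to read off the $D$-th Fourier coefficient gives
\[
a_{Dp^{2n}}\bigl(e_{\ord}((1+U_{p^2})\Theta_0')\bigr) \;=\; \sum_\lambda \lambda^n(1+\lambda)\,a_D(f_\lambda).
\]

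Lemma \ref{thm: computation q-expansion ordinary projection} asserts that this equals $a_D(\log_\gamma G_\Phi^+)$ for every $D \in \cD_S$ and every $n \geq 0$, and is therefore independent of $n$. A finite-differences (Vandermonde) argument on the finitely many distinct $\lambda$ then forces $(\lambda-1)(1+\lambda)\,a_D(f_\lambda)=0$ for each $\lambda$; the factor $1+\lambda$ already annihilates the contribution from $\lambda=-1$, so only the $\lambda=1$ component survives, yielding
\[
a_D(\log_\gamma G_\Phi^+) \;=\; 2\,a_D(f_1) \;=\; 2\,a_D\bigl(\mathrm{pr}_1(e_{\ord}\Theta_0')\bigr) \qquad \text{for every } D \in \cD_S.
\]

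The step I expect to be the main obstacle is upgrading this equality of Fourier coefficients on $\cD_S$ to the claimed equality of formal $q$-series: the left-hand side is supported in $\cD_S$ by construction, whereas a priori $\Theta_0'$ may carry nonzero Fourier coefficients at indices $D>0$ representable by $V$ but with $-D$ a square in $\Q_p$ (precisely the $D \notin \cD_S$ that are hit by the sum \eqref{eq: p-adic family Thetak}). To close this gap I would exploit, at such $D$, the same telescoping identity $\gamma(L_0 \cap L_{-1}) = L_{2t}\cap L_{2t-1}$ that forces $\Theta_0 = 0$ in Lemma \ref{lem: vanishing}: taking the derivative in $k$ and combining the contributions of $\Phi_j^+$ and $\Phi_j^-$ as $j$ runs over a $\gamma$-period should exhibit a cancellation of $\log_p\langle w_j^\pm, v\rangle$-weighted terms (analogous to the one in the proof of Proposition \ref{prop: j(D) is a p adic unit}) forcing $a_D(\Theta_0')=0$ outside $\cD_S$. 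This support property is preserved under both $e_{\ord}$ and $\mathrm{pr}_1$, so the Vandermonde identification above then extends to an equality of formal $q$-series, and the final classicality assertion is immediate from Corollary \ref{prop: classicality eordTheta0'}.
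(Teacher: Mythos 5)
Your opening computation is sound and coincides with the paper's own argument: combining Lemma \ref{thm: computation q-expansion ordinary projection} with a Vandermonde argument on the $U_{p^2}$-eigencomponents of the ordinary projection (the paper packages this as $f = e_{\ord}(\Theta_0' + U_{p^2}\Theta_0')$ and shows $a_D(f) = a_D(\mathrm{pr}_1(f))$, but your version is equivalent), yielding $a_D(\log_\gamma(G_\Phi^+)) = 2a_D(\mathrm{pr}_1(e_{\ord}(\Theta_0')))$ for all $D \in \cD_S$. You have also correctly isolated the remaining difficulty: one must show that $\mathrm{pr}_1(e_{\ord}(\Theta_0'))$ has vanishing Fourier coefficients at the indices outside $\cD_S$, namely at $Dp^{2n}$ with $\left(\tfrac{-D}{p}\right) = 1$.

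However, your proposed resolution of that difficulty is false. The telescoping in Lemma \ref{lem: vanishing} works only because at $k = 0$ the weights $\langle w_j^{\pm}, v\rangle^k$ are identically $1$, so the sums over $j$ collapse; after differentiating in $k$, the terms carry the weights $\log_p\langle w_j^{+}, v\rangle$ and $\log_p\langle w_j^{-}, v\rangle$, which are unrelated for a given $v$, and no analogous collapse occurs at any index. Vectors $v$ with $-Q(v)$ a square in $\Q_p$ are invisible to the divisors (since $\Delta(v) = 0$) but contribute honestly to the theta series, and the paper's own numerical example disproves the claimed vanishing: for $p = 7$ one has $\left(\tfrac{-5}{7}\right) = 1$, so $5 \notin \cD_S$, yet Table 2 of Section \ref{section: numerical example} records $a_5(\Theta_0'/p) \equiv 3$ and even $a_5(e_{\ord}(\Theta_0'/p)) \equiv 4 \pmod 7$; this coefficient dies only after applying $\mathrm{pr}_1$. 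So the support property is a feature of the $U_{p^2} = 1$ eigenspace, not of $\Theta_0'$ or of its ordinary projection, and it cannot be obtained by any identity at the level of the theta series. The paper's mechanism is genuinely different and requires nontrivial input on half-integral weight forms: $\mathrm{pr}_1(f)$ lies in the $(-1)$-eigenspace of the Atkin--Lehner involution $w_p$ (via the Shimura correspondence, $U_{p^2} = 1$ corresponds to $U_p = 1$, hence to $w_p = -1$, on forms new at $p$), and by the description of that eigenspace in Remark 2 of \cite{MRV}, its elements have Fourier coefficients supported on $D$ with $\left(\tfrac{-D}{p}\right) \neq 1$. Without this ingredient, or a substitute for it, your proof cannot be completed.
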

\begin{proof}
	Since $U_{p^2}$ and $e_{\mathrm{ord}}$ commute, it is enough to prove that if 
	\[
	f = e_{\ord}( \Theta_{0}' + U_{p^2} \Theta_0') = \sum_{n \geq 1} a_n(f) q^n,
	\]
	we have $\log_\gamma(G_\Phi^+)(q) = \mathrm{pr}_1(f)$. Note that by Theorem \ref{thm: computation q-expansion ordinary projection}, we have that if $D \in \cD_S$,
	\[
	a_{Dp^{2n}}(\log_\gamma(G_\Phi^+)(q)) = a_{Dp^{2n}}(f).
	\]
	for every $n \geq 0$. Therefore, the equality of the theorem follows from proving that, if $D \in \Z_{\geq 0}$ is such that $\ord_p(D) \in \{ 0 , 1\}$ and $N \geq 0$:
	\begin{enumerate}
		\item\label{(1)} If $\left( \frac{-D}{p} \right)= 1$, $a_{Dp^{2n}}(\mathrm{pr}_{1}(f)) = 0$.
		\item\label{(2)} If $\left( \frac{-D}{p} \right) \in \{ 0, -1\}$, $a_{Dp^{2n}}(\mathrm{pr}_{1}(f)) = a_{Dp^{2n}}(f)$.
	\end{enumerate}
	We begin proving the first point.The Atkin--Lehner involution at $p$, denoted $w_p$, acts by multiplication with $-1$ on $\mathrm{pr}_{1}(f)$.
	Then, \eqref{(1)} follows from the description of the $-1$ eigenspace for $w_p$ given in Remark 2 of \cite{MRV}.
	We proceed to prove the second point. Write $f$ as a sum of eigenvectors for $U_{p^2}$, namely
	\[
	f = \sum_{i = 1}^r f_i, 
	\]
	where $f_i \in S_{3/2}^{\ord}(\Gamma_0(4N), L)$ and there exists $\alpha_i$ such that $U_{p^2}f_i = \alpha_i f_i$ for every $i$. Here $L$ is a finite extension of $\Q_p$ containing all the elements $\alpha_i$. We can suppose without loss of generality that $\alpha_i \neq \alpha_j$ if $i \neq j$ and that $\alpha_1 = 1$. In particular, $f_1 = \mathrm{pr}_{1}(f)$ (which is possibly zero). Let $D$ be such that it satisfies the conditions of (2). For every $n \geq 0$, we can consider the $Dp^{2n}$-th Fourier coefficient of each side of the previous equality to obtain 
	\[
	a_D(f) = a_D(f_1) + \sum_{i = 2}^r \alpha_i^n a_D(f_i),
	\] 
	where we used that $a_D(f) = a_{Dp^{2n}}(f)$ for every $n \geq 0$, which holds by Theorem \ref{thm: computation q-expansion ordinary projection}. Considering this equality for $n = 0, \dots, r-1$ and using that the Vandermonde matrix associated to $\{ 1, \alpha_2, \dots, \alpha_r\}$ is non-singular we deduce that we must have $a_D(f) = a_D(f_1)$, implying the desired equality. Once we have $\log_\gamma(G_\Phi^+)(q) = 2\mathrm{pr}_1(e_{\ord}( \Theta_{0}'))$, the fact that $\log_\gamma(G_\Phi^+)(q) \in S_{3/2}(\Gamma_0(4N), \Q_p)$ follows from Corollary \ref{prop: classicality eordTheta0'}.
\end{proof}

\section{Numerical example}\label{section: numerical example}

We conclude by presenting a concrete example where we numerically compute the $p$-adic family $\Theta_k$ and the reduction modulo $p$ of $e_{\mathrm{ord}}(\Theta_0'/p)$.

Let $S = \{7, 13, \infty\}$, let $p = 7$ and consider $B$ be the quaternion algebra over $\Q$ ramified exactly at $\{13, \infty \}$. It can be viewed as the algebra over $\Q$ generated by $i, j, k$ where 
\[
i^2 = -2 , \ j^2 = -13, \ ij = -ji = k.
\] 
Let $\tilde{R}$ be the maximal $\Z[1/p]$-order of $B$ given by $\left\langle {1}/{2} + {j}/{2} + {k}/{2}, {i}/{4}  + {j}/{2}  + {k}/{4}, j, k \right\rangle$, let $\alpha = 1 + i \in B^\times$, which has reduced norm $\ell = 3 \not \in S$, and consider the Eichler $\Z[1/p]$-order $R = \tilde{R} \cap \alpha \tilde{R}\alpha^{-1}$ of level $3$. Denote by $\Gamma$ the group of norm one units in $R$. The quotient $\Gamma \backslash \sH_p$ is isomorphic to the $\C_p$-points of the Shimura curve $X$.

\subsection{Construction of the $p$-adic family $\Theta_k$}

Recall the definition of the $p$-adic family $\Theta_k$ given in \eqref{eq: p-adic family Thetak}. This family depends on a choice of a Schwartz--Bruhat function $\Phi$, an element $\gamma \in \Gamma$ hyperbolic at $p$ and the eigenvectors of the action of $\gamma$ on $V_{\Q_p}$. We proceed to fix these data.
Let $\tilde{R}_0$ be the subgroup of elements of $\tilde{R}$ of reduced norm zero.
As before, write $1_{\tilde{R}_0}$ for the characteristic function of $\tilde{R}_0\otimes \hat{\Z}^{(p)}$.
Consider the $\hat{R}^\times \times K_0(4\cdot 13)^{(p)}$-invariant Schwartz--Bruhat function 
\[
\Phi = 1_{\tilde{R}_0 } - 1_{\tilde{R}_0} \cdot \alpha^{-1}.
\]
Since we have the factorization of ideals $(7) = (7, x + 3)(7, x + 4)$ in the ring of integers of $\Q[x]/(x^2 + 5)$, the element $(x+3)/(-x+3) = 3x/7 + 2/7$ is a $p$-unit in $\Q[x]/(x^2 + 5)$. Its image in $B$ with respect to the embedding
\[
\Q[x]/(x^2 + 5) \intoo B, \ x \mapstoo \frac{i}{4} + \frac{j}{2} + \frac{k}{4},
\]
is equal to
\[
\gamma = \frac{2}{7} + \frac{3i}{28} + \frac{3j}{14} + \frac{3k}{28},
\]
and it can be verified that $\gamma \in \Gamma$.
Let $\mathfrak{p}$ be the prime ideal spanned by $7$ and $ x + 4$ and fix the embedding 
\begin{equation}\label{eq: embedding Q(sqrt-5) in Qp}
	\Q[x]/(x^2 + 5) \intoo \Q_p
\end{equation}
such that $\ord_p((\mathfrak p)) = 1$.
Using that $\gamma$ is hyperbolic at $p$, we deduce that its action on $V\otimes \Q[x]/(x^2 + 5)$ (and therefore on $V_{\Q_p}$) diagonalizes.
The eigenvectors of $\gamma$ are
\[
w^+ = i + \left( \frac{4x}{39} - \frac{2}{39}\right)j + \left( -\frac{4x}{39} - \frac{1}{39}\right)k
\]
\[
e = i + 2j + k
\]
\[
w^- = i + \left( -\frac{4x}{39} - \frac{2}{39}\right)j + \left( \frac{4x}{39} - \frac{1}{39}\right)k
\]
with eigenvalues $\varpi = -12x/49 - 41/49$, $1$ and $\varpi^{-1}$ respectively. Since $v_{\mathfrak p}(\varpi) = 2$, we have that $t = 1$. Note that $\langle w^+, w^- \rangle \in \Z_p^\times$, which implies that  $\{ w^+, w^- \}$ generate a hyperbolic plane. Finally, consider the unimodular $\Z_p$-lattices 
\[
L_0 = \langle w^+ , e, w^- \rangle = \langle i, j , k\rangle
\]
\[
L_1 = \langle pw^+ , e , w^-/p \rangle = \left\langle i + 2j + k, 14i -\frac{28j}{39} - \frac{14k}{39}, \frac{i}{7}  + j + \frac{8k}{7} \right\rangle.
\]

We can therefore consider the $p$-adic family $\Theta_k$ given in \eqref{eq: p-adic family Thetak} attached to the data $\Phi$, $\gamma$ and $\{ w^+ , e, w^- \}$.

\subsection{Calculation of $\Theta_0$ and $e_{\mathrm{ord}}(\Theta_0')$} Consider the same notation as above. For every $M \leq 421 \cdot p^2$ we can run over the following sets:
\begin{equation*}
	\begin{split}
		& \left\{v \in V \ \middle| \  \langle v , v \rangle = M,  1_{{\tilde{R}}_0}(v)\cdot 1_{L_0}(v) = 1 , \langle v , w^+\rangle \text { or } \langle v, w^- \rangle \in \Z_p^\times\right\}, \\
		& \left\{v \in V \ \middle| \  \langle v , v \rangle = M, 1_{{\tilde{R}}_0}(v)\cdot 1_{L_1}(v) = 1 , \langle v , pw^+\rangle \text { or } \langle v, p^{-1}w^- \rangle \in \Z_p^\times\right\}, \\
		& \left\{v \in V \ \middle| \  \langle v , v \rangle = M, 1_{\alpha \cdot {\tilde{R}_0}}(v)\cdot 1_{L_0}(v) = 1 , \langle v , w^+\rangle \text { or } \langle v, w^- \rangle \in \Z_p^\times\right\}, \\
		&\left\{v \in V \ \middle| \  \langle v , v \rangle = M, 1_{\alpha \cdot {\tilde{R}}_0}(v)\cdot 1_{L_1}(v) = 1 , \langle v , pw^+\rangle \text { or } \langle v, p^{-1}w^- \rangle \in \Z_p^\times\right\}.
	\end{split}
\end{equation*}
From there, it is possible to compute the first $421\cdot p^2$  Fourier coefficients of $\Theta_k$, for $k \in \Z$, as well as of $\Theta_0'$. In particular, define  
\[
\Theta_{{\tilde{R}_0}, j}^{+} := \sum_{\substack{v\in V \\  \langle v,p^{+ j}w^{+} \rangle \in \Z_p^\times}} 1_{\tilde{R}_0}(v) \cdot 1_{L_{j}}(v) q^{Q(v)}
\]
and define $\Theta_{{\tilde{R}_0}, j}^{-}$ with the same expression but replacing the symbol $+$ by the symbol $-$ everywhere. Define also $\Theta_{\alpha \cdot {\tilde{R}_0}, j}^{+}$ and $\Theta_{\alpha\cdot {\tilde{R}_0} , j}^{-}$ analogously. Then,
\[
\Theta_0 = \left( \Theta_{\tilde{R}_0, L_0}^+ + \Theta_{\tilde{R}_0, L_1}^+ - \Theta_{\tilde{R}_0, L_0}^- - \Theta_{\tilde{R}_0, L_1}^- \right) - 
\left( \Theta_{\alpha \cdot \tilde{R}_0, L_0}^+ + \Theta_{\alpha \cdot \tilde{R}_0, L_1}^+ - \Theta_{\alpha \cdot \tilde{R}_0, L_0}^- - \Theta_{\alpha \cdot \tilde{R}_0, L_1}^- \right)
\]
and we verify that the first $421\cdot p^2$ Fourier coefficients are $0$. For example, the first $4$ terms that appear in the previous expression are given below. 

\begin{table}[h]\label{table: q-expansions for Theta0}
	\begin{adjustwidth}{-1cm}{-1cm}
		\begin{center}
			\begin{tabular}{ ||c|c|c|c|c|c|c|c|c|c|c|c|c|c|c|c|c|c|| }
				\hline
				\multicolumn{1}{||c|}{Theta series} & \multicolumn{17}{c ||}{$q$-expansion} \\
				\multicolumn{1}{|| c | }{} & 
				2 & 5 & 6& 7& 8& 11& 13& 15& 18& 19& 20& 21& 24& 26 & 28 & 31 & 32 \\
				\hline
				
				$\Theta_{\tilde{R}_0, L_0}^+$  &2 & 2& 4& 4& 6& 8 & 2& 8 & 6& 6& 8& 8& 8& 6& 6 &10& 14 \\
				
				$\Theta_{\tilde{R}_0, L_0}^-$ & 2& 0& 4& 2& 6& 8& 0& 8& 6& 4& 8& 8& 12& 6& 6& 8 &14 \\
				
				$\Theta_{\tilde{R}_0, L_1}^+$ & 2& 0& 4& 2& 6& 8& 0& 8& 6& 4& 8& 8& 12& 6& 6& 8 &14 \\
				
				$\Theta_{\tilde{R}_0, L_1}^-$  &2 & 2& 4& 4& 6& 8 & 2& 8 & 6& 6& 8& 8& 8& 6& 6 &10& 14 \\
				\hline
			\end{tabular}
			\caption{ First Fourier coefficients of the theta series $\Theta_{\tilde{R}_0 , L_j}^{\pm}$. }
		\end{center}
	\end{adjustwidth}
\end{table}
The coefficients of $q^n$ for $n <32$ that do not appear in the table are $0$, as theta series attached to lattices in $V$ have non-zero Fourier coefficients only if $-D$ is not a square modulo $13$. We will follow a similar convention from now on. The forms on the previous table belong to $M_{3/2}(\Gamma_0(4\cdot 91), \Q)$, a space of dimension $32$, and these coefficients fully determine them. 

From \eqref{eq: p-adic family Thetak}, we see that the derivative of $\Theta_k$ with respect to $k$ evaluated at $k = 0$ is equal to 
\begin{equation}\label{eq: q-expansion Theta0'}
\Theta_0' = \sum_{j = 0}^{2t - 1} \sum_{v \in V} \Phi_j^+(v)\log_p\langle p^j w^+, v \rangle q^{Q(v)} - \sum_{j = 0}^{2t - 1} \sum_{v \in V} \Phi_j^-(v)\log_p\langle p^{-j} w^-, v \rangle q^{Q(v)}.
\end{equation}
Note that the dot products $\langle v, w^{\pm} \rangle$ belong to $\Q[x]/(x^2 + 5)$ and have $\mathfrak p $-adic valuation $0$. Using the embedding \eqref{eq: embedding Q(sqrt-5) in Qp}, we can view them as elements in $\Z_p^\times$. Therefore, the $p$-adic logarithm of these numbers lies in $p\Z_p$. We can then consider $\Theta_0'/p$ as an element in $\Z_p[[q]]$ and study its reduction modulo $p$.

Similarly as above, we can calculate the first $421\cdot p^2$ Fourier coefficients of $\Theta_0'/p$ modulo $p$. The first ones are
\[
\frac{\Theta_0'}{p} = 2q^2 + 3q^5 + 2q^6 + 4q^7 + 5q^8 + 4q^{11} + 3q^{13} + 3q^{15} + 2q^{18} + 3q^{20} + 6 q^{21} + \cdots.
\]
Since it is possible to calculate the first $421\cdot p^2$ Fourier coefficients of $\Theta_0'/p \bmod p$, we obtain the first $421$ Fourier coefficients of ${U}_{p^2}(\Theta_0'/p)$ modulo $p$. The first ones are 
\begin{equation*}
	\begin{split}
		{U}_{p^2}(\Theta_0'/p) & = 3q^2 + 3q^5 + 5q^6 + 2q^7 + 3q^{11} + 6q^{13} + 3q^{15} + 5q^{18} + q^{19} +  2q^{20} + 2q^{21}  \\ & +  2q^{24} + 3q^{26} + q^{28} + 6q^{31} + q^{32} + q^{33} + 4q^{34}+ q^{37} + 3q^{39} + q^{44} + \cdots.
	\end{split}
\end{equation*}
The following proposition, which is verified experimentally using the calculations mentioned above and Magma, is key for the next calculations.

\begin{proposition}\label{prop: Up^2 ofTheta0'/p is equal to a form of wt 3/2 mod p}
	There exists a cusp form in $S_{3/2}(\Gamma_0(4\cdot91), \Z)$ whose reduction modulo $p$ is equal to ${U}_{p^2}(\Theta_0'/p) \bmod p$.
\end{proposition}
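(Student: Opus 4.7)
The proof is essentially a finite computation, and my plan is to frame it as a Sturm-bound-style verification. The ambient space $S_{3/2}(\Gamma_0(4\cdot 91), \Z)$ is a finitely generated free $\Z$-module of computable rank $r$. Using standard algorithms (for instance via the Shimura correspondence of Theorem \ref{thm: Shimura iso} to the easier space $S_{2}(\Gamma_0(2\cdot 91), \Z)$, or directly by expressing a basis as linear combinations of ternary theta series attached to ternary quadratic forms of discriminant dividing $4\cdot 91$ together with Eisenstein contributions), one can produce an explicit $\Z$-basis $f_1, \dots, f_r$ of this space and hence an $\F_p$-basis $\bar f_1, \dots, \bar f_r$ of its reduction.

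Second, I would invoke a Sturm-type bound $B$ for mod $p$ forms in $S_{3/2}(\Gamma_0(4\cdot 91), \F_p)$: two such forms whose $q$-expansions agree up to $q^B$ are equal. Such a bound is available for half-integer weight (e.g., via the Shimura lifting and the classical Sturm bound for the resulting weight $2$ space $S_2(\Gamma_0(2 \cdot 91))$, adjusted by the effect of the squaring of Fourier indices); in any case the effective constant lies well below the threshold $421$ chosen in the numerical experiment, which is precisely why that threshold was selected.

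Third, the formula \eqref{eq: q-expansion Theta0'} expresses the Fourier coefficients of $\Theta_0'$ as finite sums of the form $\Phi_j^{\pm}(v)\log_p\langle p^{\pm j}w^\pm, v\rangle$ over vectors $v \in V$ with $Q(v) = n$. Each such log lies in $p\Z_p$ by construction, so $\Theta_0'/p \in \Z_p[[q]]$ and one can compute its first $421\cdot p^2$ Fourier coefficients mod $p$; applying $U_{p^2}$ then yields the first $421$ Fourier coefficients of $U_{p^2}(\Theta_0'/p)\bmod p$, far exceeding the bound $B$.

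Finally, the proposition is established by a single linear algebra check: solve over $\F_p$ for scalars $c_1,\dots,c_r$ such that $\sum_i c_i \bar f_i$ matches $U_{p^2}(\Theta_0'/p)\bmod p$ on the first $B$ coefficients. Existence of such a solution (which is the assertion to be checked in Magma) combined with the Sturm bound forces equality of $q$-expansions mod $p$, and lifting the $c_i$ to any integer representatives yields the desired $F = \sum_i c_i f_i \in S_{3/2}(\Gamma_0(4\cdot 91), \Z)$. The main obstacle in this plan is purely practical, namely setting up a computable $\Z$-basis of $S_{3/2}(\Gamma_0(4\cdot 91), \Z)$ (half-integer weight computations are more delicate than integer weight ones) and pinning down a precise Sturm bound in the half-integer weight setting; both are routine but must be handled carefully to rigorously underwrite the experimental verification.
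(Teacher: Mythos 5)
There is a genuine gap, and it sits exactly at the step you describe as ``a single linear algebra check.'' A Sturm-type bound lets you conclude $f \equiv g \bmod p$ from agreement of the first $B$ coefficients only when \emph{both} $f$ and $g$ are already known to be (reductions of) modular forms in a fixed space. In your plan, one of the two objects, $U_{p^2}(\Theta_0'/p) \bmod p$, is a priori just a formal $q$-series in $\F_p[[q]]$: it is produced by differentiating a $p$-adic family, and the assertion that it is the reduction of a classical form of weight $3/2$ is precisely the content of the proposition being proved. The difference $\sum_i c_i \bar f_i - U_{p^2}(\Theta_0'/p)$ is therefore not known to lie in any space of mod $p$ modular forms, so no Sturm bound applies to it, and agreement up to $q^B$ (or $q^{421}$) proves nothing about the remaining coefficients. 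The argument as written is circular. A secondary problem is your proposed route to a half-integral-weight Sturm bound ``via the Shimura lifting'': the lift $\sS_{k,D}$ only sees coefficients indexed by $\lvert D\rvert n^2$, so it cannot control all Fourier coefficients of a half-integral weight form.

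The paper's proof supplies exactly the missing modularity input before any coefficient comparison. Since $\Theta_0 = 0$ and all the pairings $\langle w_j^{\pm}, v\rangle$ occurring in \eqref{eq: p-adic family Thetak} are $p$-adic units, one has $u^{p-1} - 1 \equiv (p-1)\log_p(u) \bmod p^2$ for $u \in \Z_p^\times$, whence the congruence
\[
\frac{\Theta_0'}{p} \equiv \frac{\Theta_{p-1}}{p(p-1)} \bmod p .
\]
This identifies $\Theta_0'/p \bmod p$, and hence $U_{p^2}(\Theta_0'/p) \bmod p$, with the reduction of a genuine classical cusp form $g_1 \in S_{3/2+6}(\Gamma_0(4\cdot 91), \Z)$ of weight $15/2$. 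Only then does the finite computation become conclusive: one lifts the numerically matched candidate $g_2 \in S_{3/2}(\Gamma_0(4\cdot 91), \Z)$ to a form $\tilde g_2$ of weight $3/2+6$ congruent to it mod $p$, and compares $g_1 - \tilde g_2$ inside a single space. Even there the paper must work around the absence of a convenient half-integral Sturm bound: it raises $g_1 - \tilde g_2$ to the fourth power to land in the \emph{integral}-weight space $S_{30}(\Gamma_0(4\cdot 91), \Z)$, where the classical Sturm bound $30\cdot[\SL_2(\Z):\Gamma_0(4\cdot 91)]/12 = 1680 < 4\cdot 421$ applies; this is what dictates the threshold $421$. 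To repair your proposal you would need to add the congruence $\Theta_0'/p \equiv \Theta_{p-1}/(p(p-1)) \bmod p$ (or some substitute realizing $\Theta_0' \bmod p$ as a classical form of known weight) before invoking any Sturm-type rigidity.
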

\begin{proof}
	Since $\Theta_0 = 0$, we deduce from the expressions of  $\Theta_0'$ in \eqref{eq: q-expansion Theta0'} and of $\Theta_k$ in \eqref{eq: p-adic family Thetak} that 
	\[
	\frac{\Theta_0'}{p} \equiv \frac{\Theta_{p-1}}{p(p-1)} \bmod p = 7. 
	\]
	In particular, $U_{p^2}(\Theta_0'/p)$ is the reduction mod $p$ of an element $g_1 \in S_{3/2+6}(\Gamma_0(4\cdot 91), \Z)$. We can then verify experimentally using Magma that the first $421$ Fourier coefficients of $g_1$ are congruent modulo $p$ to the first $421$ Fourier coefficients of a modular form $g_2 \in S_{3/2}(\Gamma_0(4\cdot 91), \Z)$.
	
	We claim that this implies $g_1 \equiv g_2 \bmod p$. Indeed, let $\tilde{g_2} \in S_{3/2+6}(\Gamma_0(4\cdot 91), \Z)$ be such that $g_2 \equiv \tilde{g_2} \bmod p$. Then, the modular form $g_1 - \tilde{g_2} \in S_{3/2 + 6}(\Gamma_0(4\cdot 91), \Z)$ has the first $421$ Fourier coefficients equal to $0$ modulo $p$. This implies that the first $4 \cdot 421$ Fourier coefficients of $(g_1 - \tilde{g_2})^4 \in S_{30}(\Gamma_0(4\cdot 91), \Z)$ are congruent to $0$ modulo $p$. Since 
	\[
	421 \cdot 4 > \frac{30 \cdot [\SL_2(\Z): \Gamma_0(4\cdot 91)] }{12} = 1680,
	\]
	it follows from the Sturm bound (see \cite[Theorem 1]{St}) that $g_1 - \tilde{g_2} \equiv 0 \bmod p$, implying the desired result.
\end{proof}

Using a basis of $S_{3/2}(\Gamma_0(4\cdot91), \Z)$ given by Magma,  and using Proposition \ref{prop: Up^2 ofTheta0'/p is equal to a form of wt 3/2 mod p}, we can then compute $U_{p^2}^2(\Theta_0'/p)$ and verify the following: 
\begin{enumerate} 
	\item $\frac{1}{2}({U}_{p^2} + {U}_{p^2}^2)(\Theta_0'/p) \bmod p $ is an eigenvector for $U_{p^2}$ of eigenvalue $1$.
	\item $\frac{1}{2}({U}_{p^2} - {U}_{p^2}^2)(\Theta_0'/p) \bmod p$ is an eigenvector for $U_{p^2}$ of eigenvalue $-1$.
\end{enumerate}
It follows from there that, modulo $p$,
\begin{equation*}
	\begin{split}
		e_{\mathrm{ord}}\left( \frac{\Theta_0'}{p} \right) & =  \lim_{n \to +\infty} {U}_{p^2}^{n!} \frac{\Theta_0'}{p} = \lim_{n \to +\infty} {U}_{p^2}^{n! - 1} {U}_{p^2}\left(\frac{\Theta_0'}{p}\right) \\ & = \lim_{n \to +\infty} {U}_{p^2}^{n! - 1} \left( \frac{1}{2}({U}_{p^2} + {U}_{p^2}^2)(\Theta_0'/p) + \frac{1}{2}({U}_{p^2} - {U}_{p^2}^2)(\Theta_0'/p)  \right) \\ & = \frac{1}{2}({U}_{p^2} + {U}_{p^2}^2)(\Theta_0'/p) - \frac{1}{2}({U}_{p^2} - {U}_{p^2}^2)(\Theta_0'/p)  = {U}_{p^2}^2\left({\Theta_0'}/{p} \right).
	\end{split}
\end{equation*}
Based on this decomposition of $e_{\mathrm{ord}}\left( {\Theta_0'}/{p} \right)$, we will write 
\[
\mathrm{pr}_1(e_{\mathrm{ord}}(\Theta_0'/p)) =  ({U}_{p^2}(\Theta_0'/p) + {U}_{p^2}^2(\Theta_0'/p))/2,
\]
\[
\mathrm{pr}_{-1}(e_{\mathrm{ord}}(\Theta_0'/p)) =  -({U}_{p^2}(\Theta_0'/p) - {U}_{p^2}^2(\Theta_0'/p))/2.
\]

The results of the calculation are summarized in the following table.
\begin{table}[H]\label{table: Calculation e_ordTheta_0'/p}
	\begin{adjustwidth}{-2cm}{-2cm}
		\begin{center}
			\begin{tabular}{ ||c|y|g|g|o|y|y|g|y|y|g|g|o|g|g|o|| }
				\hline
				\multicolumn{1}{||c|}{Modular form} & \multicolumn{15}{c ||}{$q$-expansion} \\
				\multicolumn{1}{|| c | }{mod $p = 7$} & 
				2 & 5 & 6& 7& 8& 11& 13& 15& 18& 19& 20& 21& 24& 26 & 28 \\
				\hline
				$\Theta_0'/p$   & 2 & 3 & 2 & 4 & 5 & 4 & 3 & 3 & 2 & 0 & 3 & 6 & 0 & 3 & 4 \\
				${U}_{p^2}(\Theta_0'/p)$ & 3 & 3 & 5 & 2 & 0 & 3 & 6 & 3 & 5 & 1 & 2 & 2 & 2 & 3 & 1\\
				${U}_{p^2}^2(\Theta_0'/p)$& 3 & 4 & 2 & 4 & 0 & 3 & 1 & 3 & 5 & 6 & 5 & 6 & 5 & 4 & 4\\
				$({U}_{p^2}(\Theta_0'/p) + {U}_{p^2}^2(\Theta_0'/p))/2 $ & 3 & 0 & 0 & 3 & 0 & 3 & 0 & 3 & 5 & 0 & 0 & 4 & 0 & 0 & 6\\
				$({U}_{p^2}(\Theta_0'/p) - {U}_{p^2}^2(\Theta_0'/p))/2 $ & 0 & 3 & 5 & 6 & 0 & 0 & 1 & 0 & 0 & 1 & 2 & 5 & 2 & 3 & 2\\
				$e_{\mathrm{ord}}\Theta_0'/p$ &  3 & 4 & 2 & 4 & 0 & 3 & 1 & 3 & 5 & 6 & 5 & 6 & 5 & 4 & 4\\
				\hline
			\end{tabular}
			\caption{$D$th Fourier coefficients of linear combinations of ${U}_{p^2}^n(\Theta_0'/p)$ for $D$ such that $\left(\frac{-D}{13} \right) \neq 1$. For every $D$, we consider the color code blue: $\left(\frac{-D}{p} \right) = -1$, grey: $\left(\frac{-D}{p} \right) = 1$, red: $\left(\frac{-D}{p} \right) = 0$. }
		\end{center}
	\end{adjustwidth}
\end{table}

\begin{remark}
	In the decomposition $e_{\mathrm{ord}}(\Theta_0'/p) = \mathrm{pr}_1(e_{\mathrm{ord}}(\Theta_0'/p)) + \mathrm{pr}_{-1}(e_{\mathrm{ord}}(\Theta_0'/p))$ both summands are non-zero. The first summand is related to the Gross--Kohnen--Zagier generating series, as we proved in Theorem \ref{thm: log(GPhi(gamma)) = pr1(eordTheta_0')}. It would be interesting to find an arithmetic interpretation of the second summand, namely $\mathrm{pr}_{-1}(e_{\mathrm{ord}}(\Theta_0'/p))$.
\end{remark}

\subsection{Shimura lift and Hecke equivariance}\label{subsec: Hecke eigenvalues and Shimura lift}
The space $S_2^{\mathrm{new}}(\Gamma_0(7 \cdot 13))$ has dimension $7$, and there is a unique (up to scalars) cuspidal form such that $U_7$ acts by $1$ and has odd analytic rank (so in particular, the Hecke operator $U_{13}$ acts also by $1$). Its Fourier expansion is given by 
\begin{equation*}
	\begin{split}
		f & = q - 2q^3 - 2q^4 - 3q^5 + q^7 + q^9 + 4q^{12} + q^{13} + 6q^{15} + 4q^{16} - 6q^{17} - 7q^{19} + \dots \\
		& \equiv q  + 5q^3 + 5q^4 + 4q^5 + q^7 + q^9 + 4q^{12} + q^{13} + 6q^{15} + 4q^{16} + q^{17}  \dots \bmod p=7.
	\end{split}
\end{equation*}
Recall the Shimura lift 
\[
\sS_D \defeq \sS_{D, 0, 91}\colon S_{3/2}(\Gamma_0(4\cdot 91), \Q) \too S_{2}(\Gamma_0(2\cdot 91),\Q)
\]
defined in Section \ref{subsec: Shimura correspondence}, where $D$ is a square-free integer such that $-D > 0$. We computed the Shimura lift of $e_{\mathrm{ord}}(\Theta_0'/p)$ for different values of $D$ and obtained the following identities modulo $p$
\begin{equation}\label{eq: computation Shimura lift of eordTheta0'}
	\begin{split}
		&\sS_{-2}\left(\mathrm{pr}_1(e_{\mathrm{ord}}(\Theta_0'/p))\right) \equiv 6f \bmod p,\\
		&\sS_{-11}\left(\mathrm{pr}_1(e_{\mathrm{ord}}(\Theta_0'/p))\right) \equiv 3f + U_2f \bmod p,\\
		&\sS_{-15}\left(\mathrm{pr}_1(e_{\mathrm{ord}}(\Theta_0'/p))\right) \equiv 3f + 6(U_2f) \bmod p.\\
	\end{split}
\end{equation}
In particular, we see that $\mathrm{pr}_1e_{\mathrm{ord}}(\Theta_0'/p)$ is a Hecke eigenvector (mod $p$) with Hecke eigenvalues congruent to those of $f$.

The Schwartz--Bruhat function $\Phi$ is convenient. Indeed, since $\Phi$ is the difference of characteristic functions of the trace zero elements of two maximal orders, we deduce that $\Delta_\Phi(D)$ is of degree $0$ for every $D$. Moreover, $\deg_{\mathcal T_0}(\Phi)$ lands in the subspace of $\mathrm{Funct}(\Gamma \backslash \mathcal T_0, \Z)$ corresponding to weight two cusp forms of level $\Gamma_0(13 \cdot 3)$ that are old at $3$. Since $S_2(\Gamma_0(13), \Q) = 0$, we deduce $\deg_{\mathcal T_0}(\Phi) = 0$ implying the desired claim by the proof of Lemma \ref{lemma: div strong degree 0 and Hecke operators}.
Applying Theorem \ref{thm: log(GPhi(gamma)) = pr1(eordTheta_0')} to the Schwartz--Bruhat function $\Phi$ one obtains the equality
\[
\log_\gamma(G^+_\Phi)(q) = 2\mathrm{pr}_{1}(e_{\rm ord}(\Theta_0')).
\]
In particular, $(1/p)\log_p({G}^+_\Phi(\gamma)) \bmod p$ is a Hecke cuspidal eigenform of weight $3/2$ with the same Hecke eigenvalues as the cusp form $f$ of weight $2$ and level $\Gamma_0(91)$. 

On the other hand, consider $G_\Phi(q) \in J(\Q_{p^2})_\Q[[q]]$ and observe: 
\begin{itemize}
	\item The classes $[\Delta_\Phi(D)]$ are invariant under the action of $R^\times$ for every $D \in \cD_S$.
	\item The projection of the class $[\Delta_\Phi(D)]$ to a Hecke eigenspace is non-zero only if the eigenspace corresponds to an eigenform of rank $1$ by the Gross--Zagier formula.
	\item Via Jacquet--Langlands the Hecke action of $\T^N$ on $J(\Q_{p^2})$ factors through the action on $S_2^{91-\mathrm{new}}(\Gamma_0(91 \cdot 3) , \Q_{p^2})$.
	\item Since the divisors $\Delta_\Phi(D)$ on $X$ are obtained via pullback from divisors of a Shimura curve $\tilde{X}$ that is $p$-adically uniformized by $\tilde{\Gamma} \backslash \sH_p$, with $\tilde{\Gamma}$ the norm $1$ units of $\tilde{R}$, it follows that the classes $[\Delta_\Phi(D)]$ belong to the subspace corresponding to forms that are old at $3$. 
\end{itemize}
Hence, the functionals $\varphi\colon  J(\Q_{p^2}) \too \Q_{p^2}$ such that $\varphi(G_\Phi(q))$ is non-zero are generated by projections to eigenspaces where $\T^N$ acts with the same eigenvalues as it acts on eigenforms on $S_2^{\mathrm{new}}(\Gamma_0(91))$ which have rank $1$ and $U_7 = 1$. As we discussed above, there is a unique (up to scaling) such eigenform in $S_2^{\mathrm{new}}(\Gamma_0(91))$, which is $f$.
Uniqueness implies that $G_\Phi(q) \in J(\Q_{p^2})_{\Q}[[q]]$ is a non-zero multiple of $\log_\gamma(G_\Phi^+)(q)$, which has the same Hecke eigenvalues of $f$ modulo $p$. Hence, the calculation we presented gives an example (modulo $p$) of the Hecke equivariance property of the geometric theta lift provided by the Gross--Kohnen--Zagier generating series.

\bibliographystyle{abbrv}
\bibliography{bibfile}

\end{document}